\documentclass[reqno,11pt]{amsart}
\usepackage{amssymb,amsmath,amsthm,amstext,amsfonts}
\usepackage{amsmath,amstext,amsthm,amsfonts}
\usepackage[dvips]{graphicx}
\usepackage[dvips]{graphicx}
\usepackage{psfrag}
\usepackage{color}

\setlength{\topmargin}{0cm} \setlength{\headsep}{0cm}
\setlength{\textwidth}{16.5cm} \setlength{\textheight}{24cm}
\setlength{\headheight}{0cm} \setlength{\oddsidemargin}{0cm}
\setlength{\evensidemargin}{0cm} \setlength{\footskip}{1cm}
\setlength{\oddsidemargin}{0cm}
\setlength{\evensidemargin}{0cm}

\pagestyle{plain} \pagenumbering{arabic}

\makeatletter \@addtoreset{equation}{section} \makeatother

\renewcommand\thetable{\thesection.\@arabic\c@table}

\theoremstyle{plain}
\newtheorem{maintheorem}{Theorem}

\newtheorem{theorem}{Theorem }[section]
\newtheorem{proposition}[theorem]{Proposition}
\newtheorem{lemma}[theorem]{Lemma}
\newtheorem{corollary}[theorem]{Corollary}
\theoremstyle{definition} \theoremstyle{remark}
\newtheorem{remark}[theorem]{Remark}
\newtheorem{definition}[theorem]{Definition}
\newtheorem{conjecture}{Conjecture}
\newenvironment{pfof}[1]{\vspace{1ex}\noindent{\bf Proof of
#1}\hspace{0.5em}}{\hfill\qed\vspace{1ex}}

\newcommand{\R}{{\mathbb R}}

\newcommand{\C}{{\mathbb C}}
\newcommand{\Z}{{\mathbb Z}}

 \newcommand{\cF}{{\mathcal F}}
 \newcommand{\cW}{{\mathcal W}}
 \newcommand{\cH}{{\mathcal H}}
 \newcommand{\cU}{{\mathcal U}}

 \newcommand{\Sup}{{\sup}}

\newcommand{\Lip}{\operatorname{Lip}}
\newcommand{\supp}{\operatorname{supp}}
\newcommand{\Int}{\operatorname{Int}}
\newcommand{\diver}{\operatorname{div}}
\newcommand{\qand}{\quad\text{and}\quad}

\begin{document}

\title{Rapid mixing for the Lorenz attractor and \\
  statistical limit laws for their time-$1$ maps}

\author{V. Ara\'ujo and I. Melbourne and P. Varandas}

\address{Vitor Ara\'ujo and Paulo Varandas,
 Departamento de Matem\'atica, Universidade Federal da Bahia\\
Av. Ademar de Barros s/n, 40170-110 Salvador, Brazil.}
\email{vitor.d.araujo@ufba.br,
 www.sd.mat.ufba.br/$\sim$vitor.d.araujo}
\email{paulo.varandas@ufba.br, www.pgmat.ufba.br/varandas/}

\address{Ian Melbourne,
Institute of Mathematics, University of Warwick, Coventry CV4 7AL, UK}
\email{i.melbourne@warwick.ac.uk}

\thanks{I.M. was partially supported by a Santander Staff
  Mobility Award at the University of Surrey, by a European
  Advanced Grant StochExtHomog (ERC AdG 320977) and by CNPq
  (Brazil) through PVE grant number 313759/2014-6.  V.A. and
  P.V. were partially supported by CNPq,
  PRONEX-Dyn.Syst. and FAPESB (Brazil).  This research has
  been supported in part by EU Marie-Curie IRSES
  Brazilian-European partnership in Dynamical Systems
  (FP7-PEOPLE-2012-IRSES 318999 BREUDS).  We are grateful to
  Oliver Butterley for very helpful discussions regarding
  the regularity of the strong stable foliation and to the
  anonymous referees for pointing out several details in our
  argument that had to be addressed, greatly improving the
  final text of this work.
}

 \begin{abstract}
   We prove that every geometric Lorenz attractor satisfying
   a strong dissipativity condition has superpolynomial
   decay of correlations with respect to the unique SRB
   measure.  Moreover, we prove the Central Limit Theorem
   and Almost Sure Invariance Principle for the time-1 map
   of the flow of such attractors.  In particular, our
   results apply to the classical Lorenz attractor.
 \end{abstract}

 \date{20 November 2013; revised 3 September 2015}

\maketitle

% \tableofcontents

%%%%%%%%%%%%%%%%%%%%%
 \section{Introduction}  \label{sec-intro}

%%%%%%%%%%%%%%%%%%%%%

The statistical point of view on Dynamical Systems is
one of the most useful tools available for the study of
the asymptotic behavior of transformations or
flows. Statistical properties are often easier to study than
pointwise behavior, since the future behavior of an
initial data point can be unpredictable, but
statistical properties are often regular and with
simpler description.  

One of the main concepts introduced is the
notion of \emph{physical} (or \emph{Sinai-Ruelle-Bowen}
(SRB)) measure for a flow (or transformation). An
invariant probability measure $\mu$ for a flow $Z_t$ is
a physical probability measure if the subset of points
$z$ satisfying for all continuous functions $w$
\begin{align*}
  \lim_{t\to+\infty}\frac1t\int_0^t w\big(Z_s(z)\big) \,
  ds = \int w\,d\mu,
\end{align*}
has positive volume in the ambient
space. These time averages are in principle physically
observable if the flow models a real world phenomenon
admitting some measurable features.

In 1963, the meteorologist Edward Lorenz published in the
Journal of Atmospheric Sciences \cite{Lo63} an example of a
polynomial system of differential equations
\begin{align}
  \label{e-Lorenz-system}
\dot x &= 10(y - x)
 \nonumber \\
\dot y &= 28x -y -xz
\\
\dot z &= xy - \textstyle{\frac83}z
 \nonumber
\end{align}
as a very simplified model for thermal fluid convection,
motivated by an attempt to understand the foundations of
weather forecast.

Numerical simulations performed by Lorenz for an open
neighborhood of the chosen parameters suggested that almost
all points in phase space tend to a {\em chaotic attractor},
whose well known picture can be easily found in the
literature.

The mathematical study of these equations began with
the geometric Lorenz flows, introduced independently by
Afra{\u\i}movi{\v{c}} {\em et al.}~\cite{Afr77} and
Guckenheimer \& Williams~\cite{GucWil79,Wil79} as
an abstraction of the numerically observed features of
solutions to (\ref{e-Lorenz-system}). The geometric
flows were shown to possess a ``strange'' attractor
with sensitive dependence on initial conditions.  It is
well known, see e.g. \cite{APPV}, that geometric Lorenz
attractors have a unique SRB (or physical) measure.
Tucker \cite{Tucker} showed that the attractor of the
classical Lorenz equations \eqref{e-Lorenz-system} is
in fact a geometric Lorenz attractor (see Remark~\ref{rmk:Tucker} below).  
For more on the
rich history of the study of this system of equations,
the reader can consult \cite{viana2000i, AraPac2010}.

An invariant probability measure $\mu$ for a flow is
mixing if
\begin{align*}
  \mu(Z_t(A)\cap B)\to\mu(A)\mu(B)
\end{align*}
as $t\to\infty$ 
for all measurable sets $A,B$.  Mixing
for the SRB measure of geometric Lorenz attractors was
proved in \cite{LMP05} and, by~\cite{Tucker}, this
includes the classical Lorenz attractor~\cite{Lo63}.

Results on the speed of convergence in the limit above,
that is, of rates of mixing for the Lorenz
attractor were obtained only recently: a first result
on robust exponential decay of correlations was proved
in \cite{ArVar} for a nonempty open subset of
geometric Lorenz attractors.  However, this open set
does not contain the classical Lorenz attractor.
Also, it follows straightforwardly from~\cite{Melb09} that a
$C^2$-open and $C^\infty$-dense set of geometric Lorenz
flows have superpolynomial decay of correlations (in
the sense of \cite{dolgopyat98}).  It is likely, but
unproven, that this open and dense set includes the
classical Lorenz attractor.  

\subsection{Statement of results}
\label{sec:statement-results}

In this paper, we introduce an additional open assumption, {\em strong dissipativity}, that is satisfied by the classical Lorenz attractor, under which we can prove superpolynomial decay of correlations.

We consider
$C^{\infty}$ vector fields $G$ on $\R^3$ possessing an
equilibrium $p$ which is \emph{Lorenz-like}:
the eigenvalues of $DG_p$ are real and satisfy
\begin{align}\label{eq:Lorenz-like-equil}
  \lambda_{ss} < \lambda_s < 0 < -\lambda_s < \lambda_u.
\end{align} 
We say that $G$ is {\em strongly dissipative} if 
  the divergence of the vector field $G$ is strictly
  negative: there exists a constant $\delta>0$ such
  that $(\diver G)(x)\le-\delta$ for all $x\in U$,
and moreover the eigenvalues of the singularity at $p$ satisfy the 
additional constraint $\lambda_u+\lambda_{ss}<\lambda_s$. 
For the classical Lorenz equations~\eqref{e-Lorenz-system}, we have
\[
\diver G\equiv -\textstyle{\frac{41}{3}}, \quad
\lambda_s=-\textstyle{\frac83}, \quad \lambda_u\approx 11.83, \quad \lambda_{ss}\approx -22.83, 
\]
so the conditions~\eqref{eq:Lorenz-like-equil} and strong dissipativity are 
satisfied.

Let $\mathcal{U}$
denote the open set of $C^\infty$ vector fields having
a strongly dissipative geometric Lorenz attractor $\Lambda$; see
Section~\ref{sec:geometr-aspects-lore} for precise
definitions.  Given $G\in\mathcal{U}$, let $Z_t$ denote
the flow generated by $G$ and let $\mu$ denote the
unique SRB measure supported on $\Lambda$.

\begin{maintheorem}\label{thm:rapid}
Let $G\in\mathcal{U}$.
Then for all $\gamma>0$, there exists $C>0$ and $k\ge1$ such that for all
$C^k$ observables $v,w: \R^3\to
  \R$ and all $t>0$,
\[
\Big|
\int v \; w\circ Z_t \, d\mu -
\int v\,d\mu  \int w\, d\mu\Big|
\le 
C\|v\|_{C^k} \|w\|_{C^k}t^{-\gamma}.
\]
\end{maintheorem}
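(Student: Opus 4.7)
The plan is to reduce the continuous-time correlation decay problem for the three-dimensional flow to a statistical problem for a one-dimensional piecewise uniformly expanding map, and then to lift the decay estimates back to the flow by a suspension argument.

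First I would fix a cross-section $\Sigma$ transverse to $Z_t$, as in the standard geometric Lorenz construction, and consider the Poincar\'e return map $F\colon\Sigma\setminus\Gamma\to\Sigma$, where the discontinuity set $\Gamma$ consists of points whose forward orbit meets the local stable manifold of the singularity $p$. The map $F$ is uniformly hyperbolic with a one-dimensional stable foliation $\mathcal{F}^s$ whose leaves are uniformly contracted, and the return time $r\colon\Sigma\to\R^+$ has an integrable logarithmic singularity along $\Gamma$ coming from time spent near $p$. Quotienting along $\mathcal{F}^s$ produces a one-dimensional piecewise uniformly expanding map $\bar f\colon I\to I$ of Lorenz type together with an induced roof $\bar r\colon I\to\R^+$ that inherits the logarithmic singularity.

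The strong dissipativity assumption enters by guaranteeing the regularity required to carry out this quotient cleanly. The divergence bound $(\diver G)(x)\le-\delta$ forces volume contraction along strong-stable directions, while the eigenvalue condition $\lambda_u+\lambda_{ss}<\lambda_s$ is a bunching condition at the singularity that, propagated by the flow, yields $C^{1+\alpha}$ regularity of the stable holonomies on $\Sigma$. I would establish this regularity in detail following the framework developed by Butterley; the usual geometric Lorenz hypotheses only yield absolute continuity of $\mathcal{F}^s$, which is insufficient to transfer H\"older observables from $\Sigma$ down to $I$ without loss of smoothness.

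With this reduction in place I would appeal to an abstract rapid-mixing theorem for suspension semi-flows over nonuniformly expanding maps, in the spirit of Melbourne's work on nonuniformly hyperbolic flows. Such a theorem reduces Theorem~A to three verifications: (a) $\bar f$ admits a Young tower with exponential return-time tails, which is classical for Lorenz-type maps; (b) $\bar r$ is Lipschitz on each branch with integrable logarithmic singularities at the endpoints, which follows from the local analysis of the flow near $p$; and (c) the Birkhoff sums of $\bar r$ satisfy a uniform non-integrability condition of Dolgopyat type. The main obstacle I anticipate is (c): one must exhibit two periodic orbits of $\bar f$ whose $\bar r$-periods are quantitatively incommensurable, and then propagate this separation across Birkhoff sums using the Markov structure of the tower. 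Here I would exploit the logarithmic blow-up of $\bar r$ near $\Gamma$ as the source of oscillation of the temporal distance function. Openness of the resulting condition then produces superpolynomial decay uniformly throughout $\cU$, yielding the claimed bound $C\|v\|_{C^k}\|w\|_{C^k}t^{-\gamma}$ for arbitrary $\gamma>0$.
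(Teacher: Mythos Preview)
Your overall architecture---cross-section, quotient along the stable foliation, suspension over a one-dimensional expanding map, appeal to Melbourne's abstract rapid-mixing machinery---matches the paper, and your use of strong dissipativity to upgrade the stable holonomies to $C^{1+\alpha}$ is exactly right (the paper proves this as Lemma~\ref{lem-foliate} via Hirsch--Pugh plus a subadditive-cocycle argument of Arbieto).

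The gap is in your step~(c). You propose to verify a Dolgopyat-type UNI condition by exhibiting periodic orbits with ``quantitatively incommensurable'' $\bar r$-periods. That route is the one taken in~\cite{ArVar}, and it yields \emph{exponential} decay---but only for a proper open subset of geometric Lorenz flows that does \emph{not} contain the classical parameters. Verifying full UNI across all of $\mathcal U$ is not known, and your sketch does not indicate how to close that. Merely producing two periods with irrational ratio gives mixing (this is~\cite{LMP05}) but not rapid mixing; a Diophantine-type condition on the ratio is needed, and there is no mechanism in your outline for producing it.

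The paper avoids UNI entirely. Instead it works with the temporal distortion function $D$ and proves two things: first (Theorem~\ref{thm-D}), that $D\not\equiv 0$, using precisely the logarithmic blow-up of $r$ at the singular leaf that you mention; second, and this is the new idea, that the \emph{range} $D(A_0\times A_0)$ has positive lower box dimension for a suitable finite subsystem $A_0$ (Proposition~\ref{prop-dim}). The second step is obtained by choosing the base points of $D$ on unstable manifolds of periodic orbits, so that the partial sums defining $D$ become uniformly $C^1$ (Proposition~\ref{prop-g}); then $D$ restricted to a one-parameter family is a $C^1$ map that is nonconstant, hence a local diffeomorphism, and its image contains a $C^1$ copy of a dynamically defined Cantor set of positive Hausdorff dimension. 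Positive box dimension of the range rules out approximate eigenfunctions (Proposition~\ref{prop-range}), which via~\cite{Melb07,Melb09} gives superpolynomial decay. So the periodic orbits enter not through incommensurability of their periods but as anchors that make $D$ smooth enough to transport dimension.
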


By \cite{HoMel}, geometric Lorenz flows satisfy the
Central Limit Theorem (CLT) for H\"older observables.
A stronger property is the CLT for the time-$1$ map
$Z=Z_1$ which is only partially hyperbolic.  By
Theorem~\ref{thm:rapid}, $Z$ has superpolynomial decay
of correlations.  Following~\cite{MelTor02}, we use
this information to prove the CLT for time-$1$ maps of
geometric Lorenz flows thereby verifying Conjecture 4
in~\cite{ArVar}.

\begin{maintheorem}\label{thm:CLT}
  Let $G\in\mathcal U$.  Then there exists
  $k\ge1$ such that for all $C^k$ observables $v:
  \R^3 \to \R$ there exists $\sigma\ge0$ such that
\begin{equation*}
  \frac{1}{\sqrt{n}} 
  \Bigg[\sum_{j=0}^{n-1} v\circ Z^j \,
  -\,  n\int v \;d\mu\Bigg] \xrightarrow{\mathcal D}
  \mathcal N(0,\sigma^2)
\end{equation*}
where the convergence is in distribution.

Moreover, if $\sigma^2=0$, then
for every periodic point $q\in\Lambda$, there exists $T>0$ (independent of $v$) such that
$\int_0^T v(Z_tq)\,dt=0$. 
\end{maintheorem}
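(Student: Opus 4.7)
My plan is to derive the CLT for the time-$1$ map $Z = Z_{1}$ directly from Theorem~\ref{thm:rapid} via a Gordin-type martingale-coboundary approach, following \cite{MelTor02}, and then to extract the rigidity statement from the resulting coboundary equation by integrating along the flow orbit through $q$. Throughout, I would replace $v$ by the centered observable $\tilde v = v - \int v\,d\mu$, which changes neither $\sigma^{2}$ nor the rigidity content of the conclusion.

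For the CLT, Theorem~\ref{thm:rapid} gives superpolynomial decay of correlations for $Z$, so in particular $\sum_{n\geq 0}\bigl|\int \tilde v\cdot \tilde v \circ Z^{n}\,d\mu\bigr| < \infty$ and the Green--Kubo series
\[
\sigma^{2} = \int \tilde v^{2}\,d\mu + 2\sum_{n=1}^{\infty}\int \tilde v \cdot \tilde v \circ Z^{n}\,d\mu \geq 0
\]
is well-defined. The time-$1$ map of a geometric Lorenz flow is only partially hyperbolic, so the usual Gordin--Liverani argument for uniformly hyperbolic maps does not apply directly; however, the framework of \cite{MelTor02} handles precisely this situation. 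One lifts $\tilde v$ through the quotient along strong stable leaves (which represents the flow as a suspension over a factor with sufficient hyperbolicity) and constructs $\psi\in L^{2}(\mu)$ satisfying $\tilde v = m + \psi\circ Z - \psi$, where $m$ is a reverse martingale difference for the decreasing filtration $\{Z^{-n}\mathcal{B}\}_{n\geq 0}$ associated with a stable $\sigma$-algebra $\mathcal{B}$. The classical martingale CLT applied to $m$, combined with the telescoping nature of the coboundary contribution, then yields the stated convergence in distribution to $\mathcal{N}(0,\sigma^{2})$.

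For the rigidity, $\sigma^{2}=0$ forces $m = 0$ $\mu$-almost everywhere, reducing the decomposition to the pure coboundary equation $\tilde v = \psi\circ Z - \psi$. Given a flow-periodic point $q\in\Lambda$ of period $\tau > 0$, set $T = \tau$. Telescoping the identity $\tilde v(Z_{t}q) = \psi(Z_{t+1}q) - \psi(Z_{t}q)$ and integrating over $[0,\tau]$ yields
\[
\int_{0}^{\tau} \tilde v(Z_{t}q)\,dt = \int_{\tau}^{\tau+1}\psi(Z_{t}q)\,dt - \int_{0}^{1}\psi(Z_{t}q)\,dt = 0,
\]
where the last equality uses $Z_{\tau}q = q$. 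Since $T=\tau$ depends only on $q$ and not on $v$, this is the desired identity.

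The principal difficulty is regularity: $\psi$ is a priori only an $L^{2}(\mu)$ equivalence class, whereas the rigidity conclusion is pointwise at a specific periodic point $q$. A Livsic-type argument is therefore required to show that $\psi$ admits a version that is continuous---indeed H\"older---along flow orbits, so that the integrals $\int_{0}^{1}\psi(Z_{t}q)\,dt$ and $\int_{\tau}^{\tau+1}\psi(Z_{t}q)\,dt$ are unambiguously defined at the prescribed $q$. In the Lorenz setting this proceeds by first establishing a H\"older coboundary on the one-dimensional expanding Lorenz quotient using the classical Livsic theorem for uniformly expanding maps, then lifting to the cross-section via the $C^{1+\alpha}$-regularity of the strong stable foliation developed earlier in the paper, and finally to the flow by the standard suspension argument. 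Once this regularity is secured, the telescoping identity holds pointwise at $q$ and the proof is complete.
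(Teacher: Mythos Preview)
Your overall strategy---reduce along the stable foliation, apply a martingale CLT on the quotient, and handle $\sigma^2=0$ via a Liv\v{s}ic argument---is exactly the paper's. However, two steps in your sketch are not correct as stated and are precisely the places where the paper does real work.

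First, you invoke ``the classical Liv\v{s}ic theorem for uniformly expanding maps'' on the one-dimensional Lorenz quotient $\bar f:\bar X\to\bar X$. That map is not uniformly expanding: it has a singularity at $0$ where the derivative blows up, and the roof function $r$ has a logarithmic singularity there. The paper therefore passes to the induced full-branch Gibbs--Markov map $\bar F:\bar Y\to\bar Y$ and uses Gou\"ezel's Liv\v{s}ic regularity theorem for nonuniformly expanding Markov maps~\cite{Gouezel06}. The Liv\v{s}ic step is then carried out at the level of the suspension \emph{semiflow} $\bar S_t:\bar Y^R\to\bar Y^R$ (Lemma~\ref{lem:livsic} and Corollary~\ref{cor:livsic}), not the original one-dimensional map. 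A side effect is that the period $T$ one obtains is a priori only an integer multiple of the minimal flow period of $q$, because $R$ need not be a first return time; your $T=\tau$ would require additional argument.

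Second, you never explain how the $L^2$ coboundary equation $\tilde v=\psi\circ Z-\psi$ on $\R^3$ descends to a coboundary equation on the quotient, which is what your Liv\v{s}ic step would need as input. The paper handles this (and the CLT reduction simultaneously) via a Sinai-type decomposition $v\circ p=\hat v+\hat\chi-\hat\chi\circ S$ on the suspension $Y^R$, where $\hat v$ is constant on stable leaves and projects to $\bar v\in F_{\theta',k}(\bar Y^R)$ (Theorem~\ref{thm-sinai}). This is nontrivial because the roof function is unbounded and there is no convenient metric on $Y^R$; your appeal to~\cite{MelTor02} is too quick, since that reference treats Axiom~A flows with bounded roof and the paper explicitly notes that modifications are required. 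Once on the quotient semiflow, the paper obtains the martingale approximation via Cuny--Merlev\`ede~\cite{CunyMerlevedesub}, and in the degenerate case the coboundary lives where the Liv\v{s}ic machinery above applies.
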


\begin{remark} \label{rmk-sigma}
Since there are infinitely many distinct periodic solutions in $\Lambda$,
it follows from the final statement of Theorem~\ref{thm:CLT} that 
the family of $C^k$ observables $v:\R^3\to\R$ for which
  $\sigma^2=0$ forms an infinite codimension family in the
  space of all $C^k$ observables.
\end{remark}

By~\cite{HoMel}, geometric Lorenz flows satisfy also an Almost Sure Invariance Principle (ASIP) for vector-valued observables $v:\R^3\to\R^d$.
Such a result is currently unavailable for the time-$1$ map $Z$, but we are able to prove a scalar ASIP.

\begin{maintheorem}\label{thm:ASIP}
  Let $G\in\mathcal U$.  There exists $k\ge1$ such that for
  all $C^k$ observables $v: \R^3 \to \R$ the ASIP holds for
  the time-$1$ map: passing to an enriched probability
  space, there exists a sequence $X_0,X_1,\ldots$ of iid
  normal random variables with mean zero and variance
  $\sigma^2$ (as in Theorem~\ref{thm:CLT}), such that
\[
\sum_{j=0}^{n-1}v\circ Z^j=n\int v\,d\mu+\sum_{j=0}^{n-1}
X_j+O(n^{1/4}(\log n)^{1/2}(\log\log n)^{1/4}),\; a.e.
\]
\end{maintheorem}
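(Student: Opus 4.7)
The plan is to obtain the time-$1$ map ASIP by combining the superpolynomial mixing from Theorem~A with an abstract martingale ASIP, paralleling the derivation of Theorem~B.

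Restricting Theorem~A to positive integer times gives: for every $\gamma>0$ there exists $k\ge1$ such that for all $C^k$ observables $v,w:\R^3\to\R$,
\[
\Big|\int v\cdot w\circ Z^n\,d\mu-\int v\,d\mu\int w\,d\mu\Big|\le C\|v\|_{C^k}\|w\|_{C^k}\,n^{-\gamma}.
\]
Hence $Z=Z_1$ has superpolynomial decay of correlations on $C^k$ observables, and standard arguments identify the asymptotic variance $\sigma^2\ge0$ with the one appearing in Theorem~B.

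With this in hand, I would construct a martingale--coboundary decomposition $v-\int v\,d\mu=m+\chi\circ Z-\chi$ with $m,\chi\in L^\infty(\mu)$, where $m$ is a reverse martingale difference with respect to a decreasing filtration adapted to the hyperbolic directions of $Z$. Such a decomposition is first built on the two-dimensional Poincar\'e return map (the nonuniformly hyperbolic map underlying the Young tower used in the proof of Theorem~A), where analogous decompositions are classical, and then lifted to the flow via the strong stable foliation of $Z$; the superpolynomial mixing from Theorem~A together with the $C^k$ regularity of $v$ along the flow direction control the boundedness of $m$ and $\chi$. Once this decomposition is available, the Philipp--Stout ASIP for bounded reverse martingale differences applied to $\sum_{j=0}^{n-1} m\circ Z^j$ yields the ASIP with the stated rate $O(n^{1/4}(\log n)^{1/2}(\log\log n)^{1/4})$; the coboundary contribution $\chi-\chi\circ Z^n$ is $O(1)$ almost surely and is absorbed into the error term.

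The principal obstacle is the construction of the martingale--coboundary decomposition for the partially hyperbolic time-$1$ map $Z$: the flow direction is neutral, so there is no canonical $Z$-invariant filtration on $\Lambda$. The approach goes via the 2D Poincar\'e map and a suspension-type lift, and the comparison of the lifted filtration under the unit-time shift demands good regularity of the strong stable foliation---an issue explicitly flagged in the acknowledgements of this paper---together with the fast mixing supplied by Theorem~A. These are precisely the technical inputs that the strong dissipativity hypothesis is designed to provide.
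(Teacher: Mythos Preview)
Your outline is more a statement of intent than a proof, and the gap you yourself identify is exactly the point where the argument lives. Two concrete problems:

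\textbf{(1) No martingale filtration for $Z$.} You propose $v-\int v\,d\mu=m+\chi\circ Z-\chi$ with $m$ an $L^\infty$ reverse martingale difference for a filtration ``adapted to the hyperbolic directions of $Z$''. But $Z$ is only partially hyperbolic: the flow direction is neutral, so there is no $Z$-invariant decreasing $\sigma$-algebra on $\Lambda$ with the required contraction. Building the decomposition on the Poincar\'e map $f$ and ``lifting via the strong stable foliation'' does not produce a filtration for the time-$1$ map; the roof function is unbounded and not a coboundary for $Z$, so the lift is not well-posed as stated. The paper does \emph{not} construct such a decomposition. Instead it passes to the suspension $Y^R$ via the semiconjugacy $p$, performs a Sinai-type reduction $v\circ p=\hat v+\hat\chi-\hat\chi\circ S$ (Theorem~\ref{thm-sinai}) to obtain an observable $\bar v$ on the \emph{quotient} suspension $\bar Y^R$, and proves the ASIP there. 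The technical work is showing $\bar v\in F_{\theta',k}(\bar Y^R)$ despite the unbounded roof; this uses Lemma~\ref{le:Lip-symbolic} and Proposition~\ref{prop-p} and is the content of Section~\ref{sec-ASIPflow}.

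\textbf{(2) The rate does not come from Philipp--Stout.} The error $O(n^{1/4}(\log n)^{1/2}(\log\log n)^{1/4})$ is the rate in Cuny--Merlev\`ede~\cite{CunyMerlevedesub}, not in Philipp--Stout. The paper verifies the three Cuny--Merlev\`ede conditions \eqref{eq-CM1}--\eqref{eq-CM3} for the transfer operator $L_n$ of the quotient time-$1$ map $\bar S$, using the superpolynomial decay for the \emph{semiflow} (Corollary~\ref{cor:rapid}, not Theorem~A on $\R^3$) together with the explicit formula for $L_t$ in Proposition~\ref{prop-Lt} and the key estimate $\|vL_tv\|_{\theta,k}\le C(t+1)\|v\|_{\theta,k+1}^2$ of Lemma~\ref{lem-power}. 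No explicit martingale difference is ever written down; the Cuny--Merlev\`ede theorem is applied directly to $\bar v$.

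In short, the correct route is: reduce to the one-dimensional Gibbs--Markov suspension $\bar Y^R$ by quotienting stable leaves (this is where the $C^{1+\epsilon}$ foliation enters), then verify the Cuny--Merlev\`ede transfer-operator conditions using rapid mixing of the semiflow. Your proposal conflates this with a direct Gordin-type decomposition for $Z$, which is not available.
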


\begin{remark} The ASIP implies the CLT and also the
  functional CLT (weak invariance principle), and the law of
  the iterated logarithm together with its functional
  version, as well as numerous other results.
  See~\cite{PhilippStout75} for a comprehensive list.
\end{remark}

\subsection{Comments and organization of the paper}
\label{sec:comments-organiz-pap}

In Section~\ref{sec:geometr-aspects-lore}, we recall basic
properties of geometric Lorenz attractors.  In
Section~\ref{sec:tempor-distort-funct}, we define the
temporal distortion function and prove a result about the
dimension of its range. This is the main new ingredient in
the proof of Theorem~\ref{thm:rapid} in
Section~\ref{sec:fast-mixing-decay}.

In Section~\ref{sec-ASIPsemiflow}, we prove a general result
on the ASIP for time-$1$ maps of nonuniformly expanding
semiflows.   In Section~\ref{sec-sigma}, we prove that the ASIP is
typically nondegenerate.
In Section~\ref{sec-ASIPflow}, we prove 
Theorems~\ref{thm:CLT} and~\ref{thm:ASIP}.

It is natural to extend all these results to more general
singular-hyperbolic attractors (formerly referred to as
Lorenz-like flows), that is, transitive attracting sets of
three-dimensional flows having finitely many Lorenz-like
singularities and a volume hyperbolic structure; see
e.g.~\cite{AraPac2010} for the precise definitions. Indeed,
analogously to the geometric Lorenz case, it is possible to
reduce the dynamics of these attractors to a piecewise
expanding $C^{1+\epsilon}$ one-dimensional map; see
e.g.~\cite[Chapter 6]{AraPac2010} or~\cite[Section
4]{ArGalPac} for a detailed presentation.

\begin{conjecture}
  \label{conj:singhyp}
  Let $\cU$ denote the open set of $C^\infty$ vector fields
  having a singular-hyperbolic attractor on a given compact
  three-dimensional manifold. Then the results stated in
  Theorems~\ref{thm:rapid}, \ref{thm:CLT} and \ref{thm:ASIP}
  are true for all $G\in\cU$.
\end{conjecture}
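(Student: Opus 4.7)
The conjecture proposes to extend Theorems~\ref{thm:rapid}, \ref{thm:CLT} and \ref{thm:ASIP} from geometric Lorenz attractors to arbitrary singular-hyperbolic attractors, that is, transitive attracting sets with finitely many Lorenz-like singularities and a volume-hyperbolic structure. My plan is to parallel the geometric Lorenz proof step by step, exploiting the reduction to a piecewise expanding $C^{1+\epsilon}$ one-dimensional map that is already available in the literature, and isolating the new difficulties that arise from having several singularities.

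First I would formulate strong dissipativity in the general setting: at each Lorenz-like singularity $p_i$ with eigenvalues $\lambda_{ss}^i<\lambda_s^i<0<-\lambda_s^i<\lambda_u^i$ of $DG_{p_i}$, impose the additional constraint $\lambda_u^i+\lambda_{ss}^i<\lambda_s^i$, together with $(\diver G)(x)\le -\delta$ on an isolating neighborhood $U$ of the attractor. This is $C^1$-open, so the resulting set $\cU$ remains open. Next I would carry out the reduction to a one-dimensional map: the volume-hyperbolic structure, together with strong dissipativity at each singularity (and the Butterley-type regularity of the strong stable foliation used in the Lorenz case), yields a finite global cross-section and a quotient piecewise expanding $C^{1+\epsilon}$ map $f\colon I\to I$ on a disjoint union of finitely many intervals, with a roof function $r$ admitting logarithmic singularities — one at each branch boundary produced by a Lorenz-like singularity. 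The Markov structure, distortion bounds, and the return-time tail estimates needed to apply Section~\ref{sec:fast-mixing-decay} should go through in essentially the same way as in \cite[Chapter 6]{AraPac2010} and~\cite{ArGalPac}.

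The key step is to verify the main new ingredient of the paper, namely that the range of the temporal distortion function $\psi$ has Hausdorff dimension one, in this multi-singularity setting. My approach would be to localize near a chosen singularity $p_i$: by selecting periodic orbits whose symbolic itineraries return many times to a neighborhood of $p_i$ and only finitely often (or not at all) to neighborhoods of the remaining singularities, the dominant contribution to $\psi$ comes from the logarithmic singularity associated to $p_i$, and the analysis of Section~\ref{sec:tempor-distort-funct} can be applied locally almost verbatim. Once $\psi$ has one-dimensional range, the Dolgopyat-type argument of Section~\ref{sec:fast-mixing-decay} delivers superpolynomial decay of correlations, and then Theorems~\ref{thm:CLT} and~\ref{thm:ASIP} follow from the abstract results in Sections~\ref{sec-ASIPsemiflow}--\ref{sec-ASIPflow}: those sections only use rapid mixing of the flow together with the semiflow-over-a-NUE-base structure, and the infinite-codimension nondegeneracy statement in Section~\ref{sec-sigma} only needs infinitely many periodic orbits, which are guaranteed here by transitivity and uniform hyperbolicity on $\Lambda\setminus\{p_1,\dots,p_m\}$.

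The hardest part will be the multi-singularity temporal distortion argument. The local reduction above is clean when the roof function has a single dominant logarithmic singularity, but one must rule out unexpected cancellations: a priori, algebraic relations between the eigenvalues $\{\lambda_{ss}^i,\lambda_s^i,\lambda_u^i\}$ at distinct singularities could force the various logarithmic contributions to the roof to align in a way that collapses the dimension of the range of $\psi$. I would deal with this either by the localization above (choosing orbits that visit only one singularity, so other eigenvalues do not enter), or, failing that, by imposing and verifying a generic non-resonance condition on the families of eigenvalues as part of the definition of $\cU$; such a condition is $C^\infty$-open and dense and should still contain the principal examples of interest.
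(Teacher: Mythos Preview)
The statement in question is Conjecture~\ref{conj:singhyp}, which the paper explicitly leaves open: there is no proof in the paper to compare your proposal against. What you have written is a plausible strategy, not a proof, and it has substantive gaps.

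First, a mismatch with the statement: you build strong dissipativity at every singularity into your definition of $\cU$, but the conjecture as stated does not assume this. In the geometric Lorenz case the paper needs strong dissipativity already for Lemma~\ref{lem-foliate} (the $C^{1+\epsilon}$ regularity of $\cF^{ss}$), so adding it is natural, but then you are proving a strictly weaker statement than conjectured. Your further fallback of imposing a generic non-resonance condition on eigenvalues would weaken it yet again.

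Second, the heart of the paper's argument --- showing $D\not\equiv0$ in Theorem~\ref{thm-D} and then obtaining positive lower box dimension in Proposition~\ref{prop-dim} --- relies on structure that is not automatic for general singular-hyperbolic attractors. The proof uses the l.e.o.\ property of $\bar f$ to build \emph{full-branch} inducing schemes on arbitrarily small intervals around two specified points ($0$ and $y_1=f^2(0+)$), and the contradiction in Theorem~\ref{thm-D} hinges on the explicit comparison of the divergent term $r(y)$ with the continuous terms in~\eqref{eq-r2} as $y\to 0^+$. For a general singular-hyperbolic attractor the quotient map lives on a finite union of intervals, need not be l.e.o., and the double inducing scheme of Section~\ref{sec-double} may not exist as written. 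Your localization idea (orbits visiting only one singularity) is heuristically appealing but does not by itself produce the full-branch Gibbs--Markov subsystem that Proposition~\ref{prop-u}, Theorem~\ref{thm-D}, and the dimension argument require; you would need to redo these steps carefully in the multi-branch setting. Also, a minor point: what is needed is that $D(A_0\times A_0)$ has positive \emph{lower box} dimension (Proposition~\ref{prop-dim}), not that the range of the temporal distortion function has Hausdorff dimension one.
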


There exists a natural generalization of
singular-hyperbolicity for higher-dimensional attractors,
known as sectional-hyperbolicity; see e.g.~\cite[Sections
5.2 \& 8.2]{AraPac2010} and also \cite{MeMor06}. In this
setting both the stable and the unstable manifolds of points
in the attractor need not be codimension one embedded
submanifolds, which makes analysis of these singular flows
challenging.

\begin{conjecture}
  \label{conj:highdim}
    Let $\cU$ denote the open set of $C^\infty$ vector fields
  having a sectional-hyperbolic attractor in a given compact
  finite dimensional manifold. Then the results stated in
  Theorems~\ref{thm:rapid}, \ref{thm:CLT} and \ref{thm:ASIP}
  are true for all $G\in\cU$. 
\end{conjecture}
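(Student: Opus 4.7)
The plan is to follow the roadmap used for the geometric Lorenz case, namely reduce to a first-return map on a cross-section, quotient along the strong stable foliation to obtain a nonuniformly expanding map, and apply the abstract machinery for the quotient dynamics before transferring back to the flow. Concretely, one constructs a cross-section $\Sigma$ transverse to $G$ on which the first-return map $F$ is piecewise $C^{1+\alpha}$, preserves a strong stable lamination $\mathcal{F}^s$, and has integrable return times with logarithmic blow-up along the stable manifolds of the singularities (see \cite[Chapter 6]{AraPac2010} and \cite{ArGalPac}). The quotient map $\bar F$ on $\bar\Sigma=\Sigma/\mathcal{F}^s$ is then expanding along unstable leaves; crucially, in the sectional-hyperbolic setting $\bar\Sigma$ has dimension equal to $\dim E^{cu}-1$, which may exceed the one-dimensional quotient obtained in the geometric Lorenz case.

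With $\bar F$ in hand, a Young tower with superpolynomial tails can be built, yielding the discrete-time analogues of Theorems~\ref{thm:rapid}, \ref{thm:CLT}, \ref{thm:ASIP} for $\bar F$. Lifting rapid mixing to the flow then requires a uniform nonintegrability (UNI) estimate, equivalently a temporal distortion estimate along matched pairs of periodic orbits of $\bar F$, paralleling Sections~\ref{sec:tempor-distort-funct} and~\ref{sec:fast-mixing-decay}. An appropriate generalization of strong dissipativity is needed at each Lorenz-like singularity $p$: beyond the global bound $\diver G\le-\delta$ on a neighborhood $U$ of the attractor, one needs a spectral-gap condition separating the weakest contracting stable eigenvalue from the strongest expanding unstable and strongest contracting stable eigenvalues, in order to guarantee $C^{1+\alpha}$ regularity of $\mathcal{F}^s$ up to the singular set. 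Once the analysis at the level of $\bar F$ is in place, the time-$1$ map statements, Theorems~\ref{thm:CLT} and~\ref{thm:ASIP}, should follow for the flow itself by the abstract arguments of Sections~\ref{sec-ASIPsemiflow}--\ref{sec-ASIPflow}, which depend only on the existence of a Gibbs--Markov structure for the base map together with the superpolynomial mixing rate.

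The main obstacle is the higher-dimensional temporal distortion argument. In the geometric Lorenz case, the proof in Section~\ref{sec:tempor-distort-funct} exploits the explicit one-dimensional cusp structure of $\bar F$ near the singularity to show that the range of the temporal distortion function has positive Hausdorff dimension, and strong dissipativity enters by simultaneously ensuring foliation regularity and a quantitative mismatch between return times of nearby periodic orbits. When $\dim\bar\Sigma>1$, the cusp map is replaced by a multidimensional expanding map with singular set, and one must instead verify a multidimensional UNI condition: that the return time differences, viewed as a cocycle over $\bar F$, cannot be reduced modulo coboundaries to a finite-dimensional subspace. This seems to require genuinely new ideas, and it is likely that Conjecture~\ref{conj:singhyp} (still three-dimensional, but allowing multiple singularities) should be settled first as a proof of concept, so that the difficulties arising from the higher-dimensional quotient can be isolated from those arising from the coexistence of several Lorenz-like equilibria.
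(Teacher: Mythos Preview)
The statement you are addressing is Conjecture~\ref{conj:highdim}, not a theorem: the paper does not prove it and offers no argument beyond the remark that in the sectional-hyperbolic setting ``both the stable and the unstable manifolds of points in the attractor need not be codimension one embedded submanifolds, which makes analysis of these singular flows challenging.'' There is therefore no proof in the paper to compare your proposal against.

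Your text is not a proof either, and you say as much: you correctly identify that the quotient $\bar\Sigma$ is no longer one-dimensional, that a higher-dimensional UNI/temporal-distortion argument would be needed, and that this ``seems to require genuinely new ideas.'' That is an honest assessment of the state of the problem and is consistent with the paper's own framing, but it means what you have written is a research outline with an explicitly acknowledged gap at the decisive step, not a proof proposal. If the intent was to sketch a plausible line of attack on an open conjecture, the outline is reasonable and the obstacles you flag (regularity of $\mathcal F^s$ up to singularities under a suitable spectral condition, construction of a Young tower for a multidimensional cusp map, and above all the multidimensional temporal-distortion estimate) are the right ones; but it should be labelled as a strategy toward Conjecture~\ref{conj:highdim}, not as a proof.
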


\subsection*{Notation} Throughout, $C$ is used to
denote a constant whose value may change from line to line.

%%%%%%%%%%%%%%%%%%%%%%

\section{Geometric aspects of Lorenz attractors}
\label{sec:geometr-aspects-lore}

\subsection{Geometric Lorenz attractors}
\label{sec:geomL}

We define here the open set $\cU$ of $C^{\infty}$
vector fields exhibiting strongly dissipative geometric Lorenz attractors
and we describe the basic structure of such attractors;
see e.g.~\cite{AraPac2010}.

Let $G$ be a strongly dissipative $C^{\infty}$ vector field on $\R^3$ 
possessing a Lorenz-like equilibrium, which we suppose without loss to be at $0$.
We assume that the flow $Z_t$ is $C^{1+\epsilon}$ linearizable in a neighborhood
of $0$ which, by a suitable choice of coordinates, can be
assumed to contain the cube $[-1,1]^3$. 
(It follows from~\cite[Theorem~12.1]{Hartman02} that smooth linearizability holds for an open and dense set of such vector fields.)
Choose coordinates $x_1,x_2,x_3$ corresponding to the eigenspaces
of $\lambda_u$, $\lambda_{ss}$, $\lambda_s$ respectively.
We define the
cross-section $X = \{(x_1 , x_2 , 1) : |x_1 |, |x_2
|\le1\}$ and the
Poincar\'e map $f : X\to X$.
For $x\in X$ we write
$f(x)=Z_{r(x)}(x)$ where $r:X\to\R^+$ is the Poincar\'e first
return time to $X$, also referred to as the roof function.

We assume that there exists  a global exponentially contracting $f$-invariant \emph{stable
  foliation}. That is, there is a compact neighborhood
  $N\subset X$ of $(0,0,1)$ satisfying $f (N \setminus \{x_1 =
0\})\subset X$ and a 
partition $\cW^s_f$ of $N$  consisting of $C^\infty$ one-dimensional disks called {\em stable leaves} (including the ``singular leaf'' $\{x_1=0\}$).
Let $W^s_f(x)$ denote the stable leaf containing $x$.  Then it is required that
$f(W^s_f(x))\subset W^s_f(f(x))$ for all $x\in N$ and that there exist
constants $C>0$, $\lambda_0\in(0,1)$ such that
$|f^n(x)-f^n(x')|\le C\lambda_0^n$ for all 
$x,x'$ in the same leaf and all $n\ge1$.

Moreover, we assume that $\cW_f^s$ is a $C^{1+\epsilon}$ foliation,
meaning that $N$ can be chosen so that 
there is a $C^{1+\epsilon}$ change of coordinates from the interior of
$N$ onto $(-1,1)\times(-1,1)$ transforming stable leaves into vertical lines.

Shrinking $N$ if necessary, we can arrange that each stable leaf
intersects $\bar X = \{(x_1 , 0, 1) : |x_1|\le 1\} \cong
[-1, 1]$ in a single point. Define the $C^{1+\epsilon}$
projection $\pi:X\to \bar
X$ given by holonomy along the stable leaves 
(so $\pi(x)=W_f^s(x)\cap\bar X$).
Quotienting along stable leaves,
we obtain a $C^{1+\epsilon}$ one-dimensional map $\bar
f : \bar X \to \bar X$ with a singularity at $0$: $\bar
f(x_1)=\pi(f(x_1,0,1))$.

\begin{lemma}[Proposition 2.6 in
  \cite{HoMel}]\label{le:prop2.6}
   Let $\eta=-\lambda_s/\lambda_u\in(0,1)$.
  \begin{enumerate}
  \item $\bar f'$ is H\"older on $\bar X\setminus\{0\}$: $\bar f'(x)=|x|^{\eta-1}g(x)$ with $g\in
    C^{\eta\epsilon}(\bar X), g>0$;
  \item the roof function has a logarithmic singularity
    at $0$: $r=h_1+h_2$ with
    $h_1(x)=-\lambda_u^{-1}\log|\pi(x)|$ and $h_2\in
    C^{\epsilon}(X)$.
  \end{enumerate}
\end{lemma}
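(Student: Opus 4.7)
The plan is to exploit the $C^{1+\epsilon}$ linearization to compute in closed form the exit of the flow from a box around the singularity, glue this to the $C^\infty$ external return map, and then project to the stable-leaf quotient $\bar X$. In linearized coordinates the flow is $Z_t(x_1,x_2,x_3)=(x_1 e^{\lambda_u t},\,x_2 e^{\lambda_{ss}t},\,x_3 e^{\lambda_s t})$, so a point $(x_1,x_2,1)\in X$ with $0<x_1\le 1$ first hits the lateral face $\{x_1=1\}$ at time $\tau(x_1)=-\lambda_u^{-1}\log x_1$, landing at $E(x_1,x_2)=(1,\,x_2\, x_1^{-\lambda_{ss}/\lambda_u},\,x_1^\eta)$, with the symmetric formula on $\{x_1<0\}$. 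Away from $0$ the flow is $C^\infty$, so the transition $\Psi$ from a neighbourhood of this exit face back to $X$ and its transit time $\tau_{\mathrm{out}}$ are $C^\infty$; thus $f=\Psi\circ E$ and $r=\tau+\tau_{\mathrm{out}}\circ E$. Moreover, near $0$ the strong stable leaves of the flow are parallel to the $x_2$-axis and meet $X$ in vertical segments, which by uniqueness coincide with the local leaves of $\cW^s_f$ near the singular leaf.

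For part (1), restricting to $x_2=0$ gives $f(x_1,0,1)=\Psi(1,0,x_1^\eta)$. Setting $\tilde P(s):=\pi(\Psi(1,0,s))$, a $C^{1+\epsilon}$ function with $\tilde P'(0)\neq 0$ (since $\pi\circ\Psi$ is a local diffeomorphism), we obtain $\bar f(x_1)=\tilde P(x_1^\eta)$ on $\{x_1>0\}$, and analogously on $\{x_1<0\}$. Differentiating yields $\bar f'(x_1)=\eta\,|x_1|^{\eta-1}\,\tilde P'(|x_1|^\eta)$ up to a branch-dependent sign, so $g(x_1):=\bar f'(x_1)|x_1|^{1-\eta}$ has the stated form. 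Since $\tilde P'\in C^\epsilon$ and $x_1\mapsto |x_1|^\eta$ is $C^\eta$ on $[-1,1]$, the composition $\tilde P'\circ|\cdot|^\eta$ is $C^{\eta\epsilon}$, and positivity of $g$ follows from the diffeomorphism property together with an orientation argument on each branch.

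For part (2), the decomposition $r(x_1,x_2)=-\lambda_u^{-1}\log|x_1|+\tau_{\mathrm{out}}(x_2 x_1^{-\lambda_{ss}/\lambda_u},x_1^\eta)$ is immediate. Since $\pi$ is $C^{1+\epsilon}$ and vanishes exactly on the singular leaf $\{x_1=0\}$, factor $\pi(x_1,x_2)=x_1\,Q(x_1,x_2)$ with $Q\in C^\epsilon$ bounded away from $0$ and $\infty$; then
\[
h_2(x)=r(x)-h_1(x)=-\lambda_u^{-1}\log|Q(x_1,x_2)|+\tau_{\mathrm{out}}(x_2 x_1^{-\lambda_{ss}/\lambda_u},\,x_1^\eta),
\]
which is $C^{\min(\epsilon,\eta)}$; replacing $\epsilon$ by $\min(\epsilon,\eta)$ at the outset yields $h_2\in C^\epsilon$. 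The main technical subtlety throughout is identifying the local leaves of $\cW^s_f$ inside the linearization box with lines of constant $x_1$ (so that $\pi$ has the product form $x_1\,Q$ with $Q$ Hölder and bounded away from $0$); after this, the rest is careful book-keeping of Hölder exponents, and the specific value $\eta\epsilon$ in (1) arises precisely from composing the $C^\epsilon$ external data with the explicit $C^\eta$ radial factor $x_1^\eta$.
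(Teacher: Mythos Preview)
The paper does not supply its own proof of this lemma; it is quoted from \cite{HoMel}. Your argument via the $C^{1+\epsilon}$ linearization, the explicit Dulac map $E$, and composition with the external return is the standard one and is correct in outline.

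Two minor corrections. First, since the linearizing conjugacy is only $C^{1+\epsilon}$, the external return $\Psi$ and the transit time $\tau_{\mathrm{out}}$, when expressed in linearized coordinates, are $C^{1+\epsilon}$ rather than $C^\infty$; this is harmless for your H\"older bookkeeping, since you only use $\tilde P'\in C^\epsilon$ and the Lipschitz continuity of $\tau_{\mathrm{out}}$. Second, for the factorization $\pi(x_1,x_2)=x_1\,Q(x_1,x_2)$ with $Q\in C^\epsilon$ bounded away from zero you do not need to identify the leaves of $\cW^s_f$ with vertical segments inside the box: since $\pi$ is $C^{1+\epsilon}$, vanishes on the singular leaf $\{x_1=0\}$ by definition, and has $\partial_{x_1}\pi\neq0$ (the leaves are transverse to $\bar X$ and $\pi|_{\bar X}=\mathrm{id}$), the factorization follows directly from Hadamard's lemma. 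Your flagged ``main subtlety'' is therefore less serious than you suggest. The phrase ``$\pi\circ\Psi$ is a local diffeomorphism'' is also imprecise (it maps to a one-dimensional target); what you need, and what holds by the transversality built into the geometric Lorenz construction, is that the restriction $\tilde P(s)=\pi(\Psi(1,0,s))$ has $\tilde P'(0)\neq0$.
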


In addition, we assume that $ \bar f$ is \emph{uniformly
  expanding}: there are constants $\lambda_1 > 1$ and
$c > 0$ such that $|( \bar f^n)'(x)|\ge c \lambda_1^n$ for
all $x\in \bar X$ and $n>1$.

As in~\cite{LMP05}, we assume further that $\bar f$ is
\emph{locally eventually onto (l.e.o.)}; namely that for any
open set $U\subset\bar X\setminus\{0\}$, there exists
$k\ge0$ such that $f^kU$ contains $(0,1)$.   (More
generally, it suffices that almost every point in $\bar X$
has dense preimages in $\bar X$.  However, the l.e.o.\
property is standard in the literature and holds for the
classical Lorenz attractor~\cite{Tucker}.)

% For our purposes, a \emph{geometric Lorenz flow} is a
% strongly dissipative $3$-dimensional flow with a $C^{1+\epsilon}$-linearizable
% Lorenz-like equilibrium at the origin, with a
% Poincar\'e map $f:X\to X$ defined on a cross-section
% $X$ possessing a $C^{1+\epsilon}$ global stable foliation, and whose
% quotient map $ \bar f:\bar X\to\bar X$ is piecewise
% $C^{1+\epsilon}$, uniformly expanding, and l.e.o.
% 
% Tucker~\cite{Tucker} proved that the classical Lorenz equations
% satisfy  the requirements to be a geometric Lorenz flow.

Considering $U=\bigcup_{x\in X} Z_{[0,r(x)]}(x)$ we obtain
a closed neighborhood of $[-1,1]^3$ and, in what
follows, we denote by $\Lambda=\bigcap_{t>0}{Z_t(U)}$ the
\emph{geometric Lorenz attractor} of the vector field
$G$.  It can be shown that $\Lambda$ is
compact, volume hyperbolic, has a dense regular orbit
and has zero volume (Lebesgue measure in $\R^3$); see
e.g. \cite{AraPac2010,AAPP}.

\subsection{Volume hyperbolicity, dissipativity and
  consequences}
\label{sec:volume-hyperb-conseq}
We recall that, in our three-dimensional setting, volume
hyperbolicity means that there exists a $DZ_t$-invariant
singular-hyperbolic splitting of the tangent bundle over
$\Lambda$.   That is, there is a vector bundle splitting
$T_\Lambda\R^3=E\oplus F$ with $\dim E=1$, $\dim F=2$,
and there are constants $c>0$,
$\lambda\in(0,1)$, such that for all $x\in\Lambda$, $t>0$,
\begin{itemize}
\item \emph{the splitting is dominated}: $\|DZ_t \mid E_x\| \cdot 
\|DZ_{-t} \mid F_{X_t(x)}\| < c \, \lambda^t;$
\item \emph{$E$ is uniformly contracting}: $\|DZ_t \mid
  E_x\| < c \, \lambda^t;$
\item \emph{the area along $F$ is uniformly expanded}: $|\det
  DZ_t\mid F_x|\geq c \lambda^{-t}$.
\end{itemize}
The existence of the stable foliation $\cW^s_f$ of any small
cross-section to the flow of $G$ (such as $X$) is a
consequence of volume hyperbolicity for three-dimensional
smooth flows;  see e.g. \cite[Chapter 3, Section 3]{AraPac2010}.

An important consequence of domination, uniform contraction
along the stable direction $E$ and strong dissipativity for the
attractor $\Lambda$ is the existence of a $C^{1+\epsilon}$
global exponentially contracting $Z_t$-invariant
foliation $\cF^{ss}$, defined in a
neighborhood (which we may take to be $U$) of $\Lambda$.

\begin{lemma} \label{lem-foliate}
The strong stable foliation $\cF^{ss}$ is $C^{1+\epsilon}$ for some
$\epsilon>0$.
\end{lemma}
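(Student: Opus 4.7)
The plan is to construct $\cF^{ss}$ explicitly as the flow-saturation of the $C^{1+\epsilon}$ stable foliation $\cW^s_f$ on the cross-section $X$, and then verify that the construction preserves regularity. For $z\in U$ lying on the orbit of some $x\in X$ at time $s\in[0,r(x)]$, one sets $\cF^{ss}(z)=Z_s(\cW^s_f(x))$; the $f$-invariance of $\cW^s_f$ ensures this is independent of the choice of cross-section return, and both $Z_t$-invariance and exponential contraction along leaves are then immediate. The only nontrivial content of the lemma is the $C^{1+\epsilon}$ regularity of the resulting extension, which is where strong dissipativity plays its role.

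Away from the equilibrium, the return time $r$ is bounded and the flow map $(x,s)\mapsto Z_s(x)$ is a $C^\infty$ diffeomorphism on tubular neighborhoods of finite orbit segments, so pulling back transports the $C^{1+\epsilon}$ structure of $\cW^s_f$ directly to $\cF^{ss}$ on any compact piece of $U\setminus B(0,\rho)$. The main obstacle is controlling $\cF^{ss}$ inside an arbitrarily small neighborhood of $0$, where $r\to\infty$ and the derivatives of $Z_s$ blow up. Here I would work in the $C^{1+\epsilon}$ linearizing coordinates from Section~\ref{sec:geomL}, in which $Z_t=\mathrm{diag}(e^{\lambda_u t},e^{\lambda_{ss}t},e^{\lambda_s t})$, and compute the holonomy from the incoming cross-section $\{x_3=1\}$ to the outgoing cross-section $\{|x_1|=1\}$ explicitly.

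In this explicit model, a point entering with $x_1$-coordinate $x$ exits after time $t(x)\sim -\lambda_u^{-1}\log|x|$, during which the strong-stable direction contracts by the factor $|x|^{|\lambda_{ss}|/\lambda_u}$ and the weak-stable direction contracts by $|x|^{|\lambda_s|/\lambda_u}$. Computing the $(1+\epsilon)$-H\"older modulus of the derivative of the holonomy-transformed foliation shows that uniform boundedness as $x\to 0$ reduces to the algebraic inequality
\[
|\lambda_{ss}|-|\lambda_s|>\lambda_u,
\]
which is precisely the strong dissipativity condition $\lambda_u+\lambda_{ss}<\lambda_s$. This strict inequality provides a positive margin, so that an $\epsilon>0$ can be chosen for which the $C^{1+\epsilon}$-norm of the transferred foliation remains uniformly bounded through the linearizing region.

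The principal difficulty is precisely this uniform estimate: controlling the $C^{1+\epsilon}$ norm of the holonomy as orbits accumulate on $0$. Standard compactness arguments fail there, and one must work with the explicit linearized rates together with the sharp form of strong dissipativity. Once this uniform estimate is in place, patching it with the bounded estimates away from the equilibrium using $Z_t$-invariance to identify the local descriptions produces the global $C^{1+\epsilon}$ foliation $\cF^{ss}$ on $U$, with the exponential contraction and flow-invariance inherited directly from $\cW^s_f$.
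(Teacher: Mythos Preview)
Your construction has a genuine gap at the gluing step. You set the leaf through $z=Z_s(x)$ equal to $Z_s(\cW^s_f(x))$ and claim that $f$-invariance of $\cW^s_f$ makes this independent of the representation $(x,s)$. It does not. At the seam $s=r(x)$ one has $z=f(x)=Z_0(f(x))$, so consistency would require $Z_{r(x)}(\cW^s_f(x))=\cW^s_f(f(x))$. But for $y\in\cW^s_f(x)$ one has $Z_{r(x)}(y)=Z_{r(x)-r(y)}(f(y))$, which lands in $X$ only when $r(y)=r(x)$; $f$-invariance gives $f(\cW^s_f(x))\subset\cW^s_f(f(x))$, not $Z_{r(x)}(\cW^s_f(x))\subset\cW^s_f(f(x))$. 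Thus your leaves match across $X$ only if $r$ is constant on $\cW^s_f$-leaves --- precisely property~(C), which the paper obtains \emph{after} Lemma~\ref{lem-foliate} by replacing $X$ with a section foliated by $\cF^{ss}$-leaves. So the argument is circular. Relatedly, for the original section the foliations $\cW^s_f$ and $\cF^{ss}\cap X$ need not agree, so even a well-defined flow-saturation of $\cW^s_f$ would not recover $\cF^{ss}$.

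The paper proceeds in the opposite direction and makes no use of $\cW^s_f$. It applies the Hirsch--Pugh $C^r$ section theorem: $\cF^{ss}$ is $C^{1+\epsilon}$ once the bunching inequality
\[
\|DZ_t\mid E_x\|\cdot\|DZ_t\mid F_x\|^{1+\epsilon}\cdot\|DZ_{-t}\mid F_{Z_tx}\|<1
\]
holds for all $x\in\Lambda$ and some $t>0$. This is verified by a subadditive cocycle argument over every ergodic invariant measure on $\Lambda$: for measures supported away from the equilibrium, the divergence condition $\diver G\le-\delta$ forces $\lambda_E+\lambda_F\le-\delta$ and hence negativity of the relevant limit; for the Dirac mass at~$0$ the inequality reads $\lambda_{ss}+(1+\epsilon)\lambda_u-\lambda_s<0$, which for small $\epsilon$ is exactly the strong-dissipativity hypothesis. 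A uniformity lemma of Arbieto then upgrades the measure-by-measure negativity to a uniform pointwise bound. Besides avoiding the circularity above, this route has the payoff recorded in Remark~\ref{rmk:Tucker}: smoothness of $\cW^s_f$ becomes a \emph{consequence} of the lemma rather than an input, which is what is needed to cover the classical Lorenz equations.
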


\begin{proof}
We apply
\cite[Theorem 6.2]{HP70} adapted to our
setting, since only domination and uniform contraction is
used in its proof. Indeed, a
sufficient condition to obtain $C^{1+\epsilon}$ regularity for the
strong stable foliation is that for some $t>0$,
\begin{align}\label{eq:alpha-smooth}
  \|DZ_t\mid E_x\|\cdot \|DZ_t\mid F_x\|^{1+\epsilon}\cdot \|DZ_{-t}\mid
  F_{Z_t(x)}\| <1
\end{align}
for all $x\in\Lambda$. 
(We note that the statement in~\cite{HP70} covers only the case $\epsilon=0$,
but it is standard that their result extends to the case $\epsilon>0$.)

For each $t\in\R$ we define $\eta_t:\Lambda\to\R$,
\[
\eta_t(x)=\log\Big\{\|DZ_t|E_x\|\cdot\|DZ_t|F_x\|^{1+\epsilon}
\|DZ_{-t}|F_{Z_tx}\|\Bigr\}.
\]
Note that $\{\eta_t,\;t\in\R\}$ is a continuous family of
continuous functions each of which is subadditive, that is,
$\eta_{s+t}(x)\le \eta_s(x)+\eta_t(Z_s(x))$.

Let $\mathcal{M}$ denote the set of flow-invariant ergodic probability measures on $\Lambda$.  We claim that for $\epsilon>0$ sufficiently small, and for each $m\in\mathcal{M}$, the limit
$\lim_{t\to\infty}\frac1t \eta(x)$ exists and is negative for $m$-almost every $x\in\Lambda$.
It then follows from~\cite[Proposition~3.4]{arbieto2010}
that there exists constants $C,\beta>0$ such that $\exp\eta_t(x)\le Ce^{-\beta t}$ for all $t>0$, $x\in\Lambda$.
In particular, for $t$ sufficiently large,
$\exp\eta_t(x)<1$ for all $x\in\Lambda$.  Hence condition~\eqref{eq:alpha-smooth} is satisfied for such $\epsilon$
and $t$ and the result follows.

It remains to verify the claim.
Let $m_0$ denote the Dirac delta concentrated at
$0$ and let $\mathcal{M}_1=\mathcal{M}\setminus\{m_0\}$.
We deal with the cases $m\in\mathcal{M}_1$ and $m=m_0$ separately.

Each $m\in\mathcal{M}_1$ has a zero Lyapunov exponent in the flow direction and two further Lyapunov exponents $\lambda_E(m)<0$ and $\lambda_F(m)>0$ associated
with the vector bundles $E$ and $F$ respectively.
Fix $m\in\mathcal{M}_1$.
For $m$-a.e. $x\in\Lambda$ we have
\begin{align} 
\label{eq-det}
 & \lim_{t\to\infty}\frac{1}{t} \log |\det Z_t(x)|=\lambda_E(m)+\lambda_F(m), \\
\label{eq-E} & \lim_{t\to\infty}\frac{1}{t} \log \|DZ_t|E_x\|=\lambda_E(m), \\
\label{eq-F}
& \lim_{t\to\infty}\frac{1}{t} \log \|DZ_t|F_x\|=\lambda_F(m), \\
\label{eq-Finv}
& \lim_{t\to\infty}\frac{1}{t} \log \|DZ_{-t}|F_{Z_tx}\|=0.
\end{align}
On the other hand, it follows from dissipativity that
 $\limsup_{t\to\infty}\frac{1}{t} \log |\det Z_t(x)|\le-\delta$ for all
$x$.  Hence we deduce from~\eqref{eq-det}
that $\lambda_E(m)+\lambda_F(m)\le -\delta$.
Moreover, $\lambda_F(m)\le \sup_\Lambda\|DG\|$, so
for $\epsilon>0$ sufficiently small (uniformly in $m$)
$\lambda_E(m)+(1+\epsilon)\lambda_F(m)<0$.
Using~\eqref{eq-E},~\eqref{eq-F}
and~\eqref{eq-Finv} together with the definition of
$\eta_t$, it follows that
$\lim_{t\to\infty}\frac1t \eta_t(x)<0$ for $m$-almost every $x\in\Lambda$.

It remains to consider the Dirac measure $m_0$.
By strong dissipativity, for $\epsilon$ sufficiently small,
$\frac1t\eta_t(0)=
\lambda_{ss}+(1+\epsilon)\lambda_u-\lambda_s<0$ for all $t$
as required.
\end{proof}

By Lemma~\ref{lem-foliate},
we may consider the
cross-section $X=\bigcup\{F^{ss}(x):x\in \bar X\}$ in the
place of the original cross-section $X$.   
Since the strong stable foliation is $C^{1+\epsilon}$ and the cross-section $X$ is
foliated by stable leaves over the smooth disk $\overline X$, it follows that
$X$ is a $C^{1+\epsilon}$ embedded surface in $\R^3$.
All the properties described so far are retained, with
the useful advantage that $W^s_f(x)=F^{ss}(x)$ for all $x\in X$ and
\begin{description}
\item[(C)] the first return time 
  $r:X\to\R^+$ of any given point in $X\setminus\{x_1=0\}$ (where
  $\{x_1=0\}$ now represents the leaf of $\cW^s_f$ through the point
  $0\in\bar X$) to $X$ is constant on the leaves of $\cW^s_f$,
  that is, $r(x)=r(\pi(x))$ for all $x\in X\setminus
  \{x_1=0\}$. Since the cross-section $X$ is a $C^{1+\epsilon}$
  embedded surface in $\R^3$, the roof function $r:X\to\R^+$ retains the properties mentioned in Lemma~\ref{le:prop2.6}(2); in particular $r$ is a
  $C^{1+\epsilon}$ function with a logarithmic singularity
  at $\{x_1=0\}$.
\end{description}
We keep the notation $\pi:X\to \bar X$ for the holonomy
along the leaves of $\cW^s_f$ to $\bar X$ and also $\bar f$ for
the one-dimensional $C^{1+\epsilon}$ quotient map of
$f:X\setminus\{x_1=0\}\to X$ over $\cW^s_f$.

Another consequence of volume hyperbolicity is that there
exists a field of cones $\tilde C_b(x)=\{(u,v)\in E_x\times
F_x: b\|v\|\ge\|u\|\}$ having width $b>0$ containing the $F$
subbundle over $\Lambda$ which admit a continuous
$DZ_t$-invariant extension $\hat C_b(x)$ to a neighborhood
of $\Lambda$. For the geometric Lorenz flow we can assume
without loss that this neighborhood coincides with $U$.

The invariance means that $DZ_t\cdot \hat C_b(x)\subset \hat
C_b(Z_t(x))$ for $x$ in an open neighborhood $U$ of
$\Lambda$ and $t>0$, where $b>0$ is small enough. Then the cones
$C_b(x)=\hat C_b(x)\cap T_xX$ on $T_xX$ are also
$D\bar f$-invariant and defined on the whole of $X\cap U$.

We say that a $C^1$ curve $\gamma$ in $X$ is a
\emph{$u$-curve} if $\gamma'(s)\subset C_b(\gamma(s))$ for
all parameter values $s$. The $D\bar f$-invariance of the field
of cones $C_b$ ensures that the image by $f$ of every
$u$-curve is sent into another $u$-curve. Moreover, the
tangent direction to the stable leaves $T_x\cW^s_f(x)$ is not
contained in the $C_b(x)$ cone and makes an angle bounded
away from zero with any vector inside $C_b(x)$, for all
$x\in X$, by the volume hyperbolicity assumption; see
\cite{APPV,AraPac2010}.

\begin{remark} \label{rmk:Tucker}
Tucker~\cite{Tucker} showed that the classical Lorenz equations have a robust nontrivial attractor $\Lambda$ containing the equilibrium at the origin.  It follows from
Morales~{\em et al.}~\cite{MPP04} that $\Lambda$ is a singular hyperbolic attractor that (in their words) ``resembles a geometric Lorenz attractor''.  In particular, it is immediate that all
of the properties listed above are satisfied except possibly for (i) strong dissipativity, (ii) the l.e.o property, and (iii) smoothness (class $C^{1+\epsilon}$) of
the contracting foliations $\cF^{ss}$ and $\cW_f^s$ for the flow and Poincar\'e map respectively.
We note that property~(i) is immediate for the classical Lorenz equations and condition~(ii) was verified in~\cite{Tucker}.
Regarding~(iii), it is claimed in~\cite[Section~2.4]{Tucker} that $\cW_f^s$ is a
smooth foliation but no details are provided.

Smoothness of the contracting foliations $\cF^{ss}$ and $\cW_f^s$ is not part of the definition of singular hyperbolic attractor, and hence is not discussed
in~\cite{MPP04}.  However, proofs of existence of an SRB measure with good statistical properties rely heavily on the smoothness of $\cW_f^s$.  Although this foliation is of codimension one, the fact that the Poincar\'e map $f$ is singular means that an extra argument is required; see for example
Robinson~\cite{Robinson92} and Rychlik~\cite[Section~4]{Rychlik90}.
In particular, our results apply to the open set of flows considered by~\cite{Robinson92,Rychlik90}, but these do not include the classical Lorenz equations.  However, as shown above in Lemma~\ref{lem-foliate}, the properties established by~\cite{MPP04,Tucker} combined with strong dissipativity guarantee smoothness of $\cF^{ss}$, and hence of $\cW_f^s$, for the classical parameters and nearby parameters.
\end{remark}

\subsection{Inducing and quotienting}
\label{sec:lorenz-attractors-as}

The geometric Lorenz attractor can be written as a
suspension flow $S_t:X^r\to X^r$ given by $S_t(x,s)=(x,s+t)$ on the space
$$
X^r=\{(x,t) \in X\times \R : 0\le t \le r(x)\} /\sim
$$
where $(x,r(x))\sim(f(x),0)$. Indeed, we can take the
conjugacy as $\Phi:X^r\to U, (x,s)\mapsto Z_s(x)$
(which is smooth) and the roof function $r:X\to\R^+$
has a logarithmic singularity at all points of
$X\cap\{x_1=0\}$, is smooth elsewhere and $f$ is a
non-uniformly hyperbolic map with invariant
stable foliation $\cW^s_f$. We denote also by $0$ the point
$\pi(\{x_1=0\})$ in what follows and since
$r\circ\pi=r$ we also write $r$ for the restriction $r:
\bar X\to\R^+$.

In particular, since $\bar f : \bar X\to \bar X$ is a
$C^{1+\epsilon}$-piecewise nonuniformly expanding map, there
exists a subset $\bar Y\subset \bar X$ and an inducing time
$\tau: \bar Y \to \Z^+$ such that $\bar F =\bar f^{\tau} :
\bar Y \to \bar Y$ is a $C^{1+\epsilon}$-piecewise expanding
Markov map with partition $\alpha_0$; see e.g.~\cite[Theorem
4.3]{ArVar}.  

Since $\bar f$ is l.e.o., for any specified point $x\in\bar X$ we can choose
$\bar Y$ to be an arbitrarily small open interval containing $x$;
see e.g.\ \cite{ALP}, where an
inductive construction procedure is described showing that
we can build a full branch Markov map $\bar F:\bar Y\to\bar Y$ as long as $\bar Y$ is a
neighborhood of a point with dense preimages. 

During the paper, we will consider various inducing schemes.  All of these
are full branch on an interval except for the one constructed in
Section~\ref{sec-double} which is the combination of two such inducing schemes.

Let $\alpha_0^n=\bigvee_{i=0}^{n-1}(\bar
F^i)^{-1}(\alpha_0)$ denote the $n$th refinement of $\alpha_0$, and set
  $\tau_n(y)=\sum_{j=0}^{n-1}\tau( \bar
  f^j(y))$, so that $\bar F^n(y)=\bar f^{\tau_n(y)}(y)$.

The most important features of $\bar f$ are the
following backward contraction and bounded distortion properties.
There exist constants $c_0>0, \lambda\in(0,1)$
such that for each $n\ge1$
\begin{description}
\item[backward contraction] 
  $\bar F^n\mid_{\alpha_0^n(y)} : \alpha_0^n(y)\to\bar Y$ is a
  $C^{1+\epsilon}$ diffeomorphism and
if $y'\in\alpha_0^n(y)$, then
\begin{align}\label{eq:backcontraction}
    | \bar f^i(y')-  \bar f^i(y)| \le
    c_0\lambda^{\tau_n(y)-i}|\bar F^n(y')-\bar F^n(y)|, \quad
    i=0,\dots,\tau_n(y)-1,
  \end{align}
  Moreover there is \emph{slow recurrence to the
    singular point}
  \begin{align}\label{eq:slow-recurrence}
    |\bar f^i(y)|\ge \sqrt\lambda^{\tau_n(y)-i}, \quad
    i=0,\dots,\tau_n(y)-1;
  \end{align}

\item[bounded distortion] if $y'\in\alpha_0^n(y)$, then
\begin{align}\label{eq:bdd-dist}
    \left| \frac{D\bar F^n(y)}{D\bar F^{n}(y')} -1 \right|
    \le c_0 |\bar F^{n}(y)-\bar F^{n}(y')|.
  \end{align}
\end{description}
We note that the induced map can be obtained by the methods presented in \cite{ALP}
and conditions \eqref{eq:backcontraction} and \eqref{eq:slow-recurrence}
follow from the definition of hyperbolic times (c.f. Definition~10 in \cite{ALP} with $b=1/2$).

Next we construct a piecewise uniformly hyperbolic
map $F:Y\to Y$ with infinitely many branches, which covers $\bar
F$, as follows:  Define
$Y=\bigcup\{W^s_f(y):y\in\bar Y\}$ to be the union
  of the stable leaves through $\bar Y$ and define
the Poincar\'e return map $F(y)=f^{\tau(\pi(y))}(y)$ for $y\in Y$.
We let $\alpha$ denote the
measurable partition of $Y$ whose elements are $\bigcup\{
W^s_f(x) : x\in a \}$ with $a\in\alpha_0$.
Also, we extend $\tau:\bar Y\to\Z^+$ to a function on $Y$ by
setting $\tau(y)=\tau(\pi y)$.

Let $\mu_{\bar Y}$ be the unique $\bar F$-invariant
absolutely continuous probability measure on $\bar Y$. It is
well-known that $r \in L^1(\mu_{\bar Y})$. It is then
standard that there exist unique invariant measures $\mu_Y$
for $F:Y\to Y$, $\mu_X$ for $f:X\to X$ and $\mu_{\bar X}$
for $\bar f:\bar X\to\bar X$ satisfying
$\pi_*(\mu_Y)=\mu_{\bar Y}$, $\pi_*(\mu_X)=\mu_{\bar X}$ and
also $\mu_X=\sum_{n\ge1} \sum_{j=0}^{n-1}
f^j_*(\mu_Y|\{\tau\circ\pi=n\})$ and $\mu_{\bar
  X}=\sum_{n\ge1} \sum_{j=0}^{n-1} \bar f^j_*(\mu_{\bar
  Y}|\{\tau=n\})$.  We have $\mu_Y\ll\mu_X$ and
$\mu_Y(Y)=1$, hence $\mu_X(Y)>0$; see
e.g.~\cite[Section~3]{ArVar} for more details.

\subsection{Local product structure}
\label{sec:local-product-struct}

Here we obtain an almost everywhere defined local product
structure for the induced hyperbolic map $F:Y\to Y$.
We have already seen that the Poincar\'e map $f:X\to X$ has a stable foliation
$\cW^s_f$ with leaves that cross $X$, and hence the induced map $F:Y\to Y$ has stable manifolds $W^s_F(y)=W^s_f(y)$ that cross $Y$.
In the next proposition, we construct local unstable manifolds for $F$ of uniform size, defined almost everywhere.

\begin{proposition}\label{pr:Wu_crosses}
For $\mu_Y$-almost every $y\in Y$, there exists a local
unstable manifold $W_F^u(y)\subset W_{loc,f}^u(y)$ that
crosses $Y$.
In particular, $\pi(W_F^u(y))=\bar Y$ for
$\mu_Y$-almost every $y\in Y$.
\end{proposition}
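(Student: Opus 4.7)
The strategy is to construct $W_F^u(y)$ as a nested intersection of return strips $R_n=F^n(b_n)\subset Y$ around a chosen backward $F$-orbit of $y$, using the full-branch Markov structure of $\bar F$ for horizontal spanning of $\bar Y$ and the uniform stable contraction of $F$ for vertical collapse.

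First I would check that a $\mu_Y$-full-measure set of $y\in Y$ admits an infinite backward $F$-orbit $(y_n)_{n\ge 0}$ with $y_0=y$ and $F(y_{n+1})=y_n$: this follows from the $F$-invariance of $\mu_Y$ by passing to the natural extension of $(Y,F,\mu_Y)$. Fix such a $y$ and such a backward orbit, and set $b_n:=\alpha^n(y_n)\in\alpha^n$.

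Next I would analyse $R_n:=F^n(b_n)$. The sequence is nested, $R_{n+1}\subseteq R_n$, since $F(b_{n+1})\subseteq b_n$, and every $R_n$ contains $y=F^n(y_n)$. Using $\pi\circ F=\bar F\circ\pi$ together with the full-branch Markov identity $\bar F^n(\pi(b_n))=\bar Y$ gives $\pi(R_n)=\bar Y$, so each $R_n$ spans $\bar Y$ horizontally. Since $\tau_n$ is constant on $\alpha_0^n$-cylinders (property (C) in Section~\ref{sec:volume-hyperb-conseq}), $F^n|_{b_n}=f^{\tau_n(y_n)}|_{b_n}$, and stable contraction then gives $\operatorname{diam}_s(R_n)\le C\lambda_0^{\tau_n(y_n)}\to 0$ (using $\tau_n(y_n)\ge n$). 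In straightened coordinates where $Y$ is a rectangle with stable leaves vertical, $b_n$ is a vertical strip bounded above and below by $u$-curves; by forward invariance of the cone field $C_b$ their $F^n$-images remain $u$-curves, so $R_n$ is sandwiched between two $u$-curves that both span $\bar Y$ and whose $C^0$-separation tends to $0$.

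I would then define $W_F^u(y):=\bigcap_n R_n$. The sandwiching already yields a Lipschitz graph over $\bar Y$ through $y$, in particular $\pi(W_F^u(y))=\bar Y$. To upgrade this to a $C^{1+\epsilon}$ $u$-curve I would apply the graph transform on the space of $C^{1+\epsilon}$ sections of $\pi:Y\to\bar Y$ whose derivative lies in $C_b$; the bounded-distortion estimate~\eqref{eq:bdd-dist} combined with the backward-contraction estimate~\eqref{eq:backcontraction} supply the needed contraction in the $C^1$-norm, with H\"older remainders controlled by the stable contraction. The inclusion $W_F^u(y)\subset W_{loc,f}^u(y)$ follows because $\bigcap_n DF^n(C_b(y_n))$ is one-dimensional by the cone domination in the singular-hyperbolic splitting, so the tangent to $W_F^u(y)$ at $y$ coincides with the unstable direction and $W_F^u(y)$ agrees as a germ with the local unstable manifold $W_{loc,f}^u(y)$, after shrinking if necessary.

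The main obstacle is promoting the Hausdorff convergence of $R_n$ to genuine $C^{1+\epsilon}$ convergence: uniform control of cone widths and of the distortion of $F^n|_{b_n}$ must be maintained even though the return times $\tau_n(y_n)$ grow and $f$ has a logarithmic singularity along $\{x_1=0\}$. The estimates~\eqref{eq:backcontraction}, \eqref{eq:slow-recurrence} and~\eqref{eq:bdd-dist} from Section~\ref{sec:lorenz-attractors-as} are precisely tailored to handle this.
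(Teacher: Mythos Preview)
Your construction via nested return strips $R_n=F^n(b_n)$ is a genuinely different route from the paper's proof. The paper starts from Pesin theory to obtain a short piece of $W^u_{loc,f}(y)$, then uses Poincar\'e recurrence and forward iteration to stretch that piece across $Y$ (taking an Arzel\`a--Ascoli limit of the $u$-curves $W_{n_i}=F^{n_i}\bigl((\pi|W^u_F(y))^{-1}\alpha_0^{n_i}(y_0)\bigr)$), and finally checks that the limit equals $W^u_{loc,f}(y)$ by a direct backward-contraction estimate. Your approach is more in the spirit of a direct symbolic/Markov construction of the unstable lamination and avoids invoking Pesin theory; the paper's approach avoids setting up a graph transform.

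There is, however, a real gap in your final step. You argue that $\bigcap_n DF^n(C_b(y_n))$ is one-dimensional, hence the tangent to your curve $W=\bigcap_n R_n$ at $y$ agrees with the unstable direction, so $W$ and $W^u_{loc,f}(y)$ agree \emph{as germs}, ``after shrinking if necessary''. But shrinking destroys exactly the property you worked to establish: that $W$ crosses $Y$. Tangency at $y$ alone does not force the entire curve $W$ (which spans $\bar Y$) to lie inside $W^u_{loc,f}(y)$, and the proposition requires both the inclusion and the spanning simultaneously.

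What is missing is a direct verification that for every $z\in W$ the backward $f$-iterates satisfy $|f^{-j}z-f^{-j}y|\to 0$. This does follow from your setup, but it needs an extra observation: for each $n$, the preimages $z_n$ and $y_n$ both lie in $\bigcap_{k\ge 0}F^k(b_{n+k})$, which by the same argument as for $W$ is itself a $u$-curve (a Lipschitz graph over $\bar Y$ with slope bounded by the cone width, uniformly in $n$). Hence $|z_n-y_n|\le C|\pi z_n-\pi y_n|$, and the right-hand side decays like $c_0\lambda^{\tau_n}|\pi z-\pi y|$ by~\eqref{eq:backcontraction}. Interpolating to intermediate $f$-times then gives the required exponential backward convergence, so $W\subset W^u_{loc,f}(y)$ without any shrinking. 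This is the analogue of the estimate~(2.5) that the paper carries out for its limit curve.
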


\begin{proof}
  We begin with the local unstable manifolds $W^u_{loc}$ for
  the flow $Z_t$.  It follows from Pesin theory (see
  e.g. \cite{BarPes2007}) that almost every point $p$ of
  $\Lambda$ with respect to the SRB measure $\mu$ admits a
  local unstable manifold $W^u_{loc}(p)$ which is a
  $C^{1+\epsilon}$-curve containing $p$ in its interior.
  By definition,
  $p'\in W^u_{loc}(p)$ if and only if $|Z_t(p')-Z_t(p)|\le
  C_p \lambda^{-t}$ for all $t<0$ (recall that
  $\lambda\in(0,1)$ and also that the constant $C_p$ depends
  on the leaf); see e.g.  \cite{APPV}. Since $\Lambda$ is an
  attractor, unstable leaves are contained in $\Lambda$.

  The smooth conjugacy $\Phi^{-1}$ sends these leaves into
  unstable leaves for the suspension flow $S_t$, which can
  be written locally as $\tilde
  W^u_{loc}(\Phi^{-1}(p))=\{(\gamma(s),t(s)): s\in[-1,1]\}$,
  where $\gamma:[-1,1]\to X$ and $t:[-1,1]\to\R$ are
  $C^{1+\epsilon}$ diffeomorphisms into their images. By its
  definition, the curve $\gamma$ is the local unstable
  manifold $W^u_{loc,f}(x)$ through $x=\gamma(0)$ with
  respect to $f$, that is, $x'\in W^u_{loc,f}(x)$ if and
  only if $|f^n(x')-f^n(x)|\le C_x' \lambda^{-n}$, for all
  $n<0$. We observe that the inverse images of $x$ and $x'$
  are all well defined since these points belong to the
  attractor $\Phi^{-1}(\Lambda)$ which is
  $S_t$-invariant. Moreover, the curve $\gamma$ is a graph
  $\gamma(s)=(x(s),y(x(s)))$ and $\gamma'$ is contained in a
  cone $\{(x',y')\in\R^2:|y'|<\xi|x'|\}$ for some $0<\xi<1$
  by the domination condition on $f$, consequence of the
  existence of dominating splitting for the flow on the
  attractor.

  We remark that $W^u_{loc,f}(x)$ is a $u$-curve which
  coincides with $W^u_F(x)$ if $x$ also belongs to
  $ Y$, and so the statements above hold for
  $\mu_Y$ almost every point $y\in Y$. Indeed, on
  the one hand, $W^u_{loc,f}(x)$ is formed by points whose
  preorbit is asymptotic to the preorbit of $x$, hence these
  preorbits contain the preorbits with respect to $F$, and
  so $W^u_{loc,f}(x)\subset W^u_F(x)$. On the other hand,
  this inclusion shows that $W^u_{loc,f}(x)$ and $W^u_F(x)$
  coincide in a neighborhood of $x$ inside $W^u_F(x)$. Since
  these unstable manifolds are contained in the attractor
  then, repeating the argument around each point $z\in
  W^u_{loc,f}(x)$, we see that the two manifolds coincide.

  In addition, the stable leaves through points $y\in
  W^u_F(x)$ are transverse to $W^u_F(y)$ and the angle
  between $T_y W^s_F(y)$ and $T_yW^u_F(y)$ is bounded away
  from zero. Hence 
  $\pi(W^u_F(y))$ is a neighborhood of $y_0=\pi(y)$ in $\bar Y$
   for $\mu_Y$-a.e. $y$.  

  Let $\alpha_0^n(y_0)$ denote the element of the $n$th refinement $\alpha_0^n$ 
  that contains $y_0$.  This is well-defined for all $n\ge1$ for 
   $\mu_{\bar Y}$-a.e.~$y_0\in\bar Y$.
   Moreover, $\pi(W^u_F(y))\cap\alpha_0^n(y_0)$ is a neighborhood of $y_0$ for all  $n\ge1$ for $\mu_Y$-a.e.~$y$.
   Since $\bar F$ is full-branch, $\bar F^n(\alpha_0^n(y_0))=\bar Y$
   for all $n$.

  By the Poincar\'e Recurrence Theorem, we may assume without
  loss that $y$ is recurrent: there
  exists $n_i\to\infty$ such that $F^{n_i}y\to y$.
  Therefore, for all
  large enough $i$ we have $\bar F^{n_i}(y_0)\in \pi(W^u_F(y))$ and
  hence the iterate of a connected piece of the unstable
  manifold of $y$ defined by
  \begin{align*}
    W_{n_i}=F^{n_i}\big( (\pi\mid W^u_F(y))^{-1}
    \alpha_0^{n_i}(y_0)\big)
  \end{align*}
  is a $u$-curve that crosses $Y$.
  The sequence $W_{n_i}$ has a convergent subsequence to $W$ by the
  Arzel\'a-Ascoli Theorem and by the recurrence
  assumption on $y$ we have $y\in W$.

  We claim that $W=W^u_{loc,f}(y)$, which completes the
  proof that $\mu_Y$-almost every point has an unstable
  manifold crossing $Y$. The
  last statement of the proposition is a simple restatement
  of this conclusion.

  To prove the claim, we consider $y'\in W$ and
  sequences $y_i,y'_i\in W_{n_i}$ such that
  $(y_i,y'_i)\to (y,y')$. Fix $l\ge1$ and choose
  $L\in\Z^+$ so that $\tau_{n_i}(y')>l$ for all
  $i\ge L$.
 By the definition of $W_{n_i}$ and
  since $W^u_{loc,f}(y')=W^u_F(y')$, we have uniform
  backwards contraction.  Thus
  \begin{align}\label{eq:cont-u-curves}
    |y_i-y'_i|
    &=
    |f^l(f^{\tau_{n_i}(y')-l}(z_i))-f^l(f^{\tau_{n_i}(y')-l}(y'))|
    \ge
    \frac{\lambda^{-l}}{c_0'}
    |f^{\tau_{n_i}(y')-l}(z_i)-f^{\tau_{n_i}(y')-l}(y')|
  \end{align}
  where $z_i\in W^u_{loc,f}(y')$ is such that
  $y_i=F^{n_i}(z_i)$. Hence $|f^{-l}(y_i)-f^{-l}(y'_i)|\le
  c_0'\lambda^{l} |y_i-y'_i|$. To obtain $c_0'$ we have used
  that all the iterates $W_n$ of $W^u_{loc,f}(y')$ are
  $u$-curves and so their length is comparable to the length
  of their projection $\pi(W_n)$ on $\bar X$; and then take
  advantage of the backward contraction property
  (\ref{eq:backcontraction}) associated to the partition
  $\alpha_0$ with the same contraction rate
  $\lambda$. Finally, since these constants are independent
  of $i$, letting $i\to\infty$ gives
  $|f^{-l}(y)-f^{-l}(y')|\le c_0'\lambda^{l} |y-y'|$ for each
  given fixed $l\ge1$.  This completes the proof of the
  claim and finishes the proof of the proposition.
\end{proof}

Using this geometric structure we can also prove the following:

\begin{proposition} \label{prop-localprod} The induced map
  $F:Y\to Y$ has a \emph{local product structure}: for any
  partition element $a\in\alpha$ there exists a measurable
  map $[\cdot,\cdot]:Y\times a\to a$ defined for all $y'\in
  a$ and $\mu_Y$ almost every $y\in Y$ such that 
 $$
 [y,y']\in W_F^u(y)\pitchfork W_F^s(y')
 $$
 consists of a unique point.  In addition, the map
 $[\cdot,\cdot]$ is constant along unstable manifolds in the
 first coordinate, and constant along stable manifolds in
 the second coordinate.  Furthermore, $[\cdot,\cdot]$ is
 $C^{1+\epsilon}$ in the second coordinate.
\end{proposition}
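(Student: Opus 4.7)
The plan is to \emph{define} $[y,y']$ as the intersection of the unstable manifold $W_F^u(y)$ (a.e.\ defined by Proposition \ref{pr:Wu_crosses}) with the stable leaf $W_F^s(y')$, and then read off each property from the geometry already set up. Fix $a\in\alpha$ and $y'\in a$. By Proposition \ref{pr:Wu_crosses}, for $\mu_Y$-a.e.\ $y\in Y$ the set $W_F^u(y)$ is a $u$-curve with $\pi(W_F^u(y))=\bar Y$. Since $u$-curves are transverse to the stable leaves with angle bounded away from zero (Section \ref{sec:volume-hyperb-conseq}) and $W_F^u(y)$ is a graph over its $\pi$-projection in the straightening chart, the restriction $\pi|_{W_F^u(y)}:W_F^u(y)\to\bar Y$ is a bijection. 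Since $W_F^s(y')=W_f^s(y')$ projects under $\pi$ to the single point $\pi(y')\in\bar Y$, it follows that $W_F^u(y)\cap W_F^s(y')$ consists of a single transverse intersection point, which lies in $a$ because $a$ is a union of stable leaves over $\pi(a)$. Set $[y,y']$ equal to this point.

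The two constancy assertions are then immediate from the construction: if $y_1,y_2$ lie on a common unstable manifold then $W_F^u(y_1)=W_F^u(y_2)$, so $[y_1,y']=[y_2,y']$; likewise if $y_1',y_2'$ lie on a common stable leaf then $W_F^s(y_1')=W_F^s(y_2')$, so $[y,y_1']=[y,y_2']$.

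For the $C^{1+\epsilon}$ regularity in the second coordinate, fix $y$ and consider $\varphi_y:a\to a$, $\varphi_y(y')=[y,y']$. The plan is to work in the $C^{1+\epsilon}$ chart used in Section \ref{sec:geomL} that straightens $\cW^s_f$ (which is $C^{1+\epsilon}$ by Lemma \ref{lem-foliate}) into vertical lines, so that in local coordinates $a$ becomes $\pi(a)\times(-1,1)$. The unstable curve $W_F^u(y)\cap a$ is $C^{1+\epsilon}$ by Pesin theory applied to the $C^{1+\epsilon}$ Poincar\'e map $f$, and being a $u$-curve it is transverse to the vertical stable leaves, so it is the graph $\{(x_1,h_y(x_1)):x_1\in\pi(a)\}$ of a $C^{1+\epsilon}$ function $h_y$. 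In these coordinates $\varphi_y(x_1,x_2)=(x_1,h_y(x_1))$, which is manifestly $C^{1+\epsilon}$. The only substantive technical input is the $C^{1+\epsilon}$ smoothness of the stable foliation, which is precisely what Lemma \ref{lem-foliate} provides; everything else amounts to geometric bookkeeping with the straightening chart, the transversality of $u$-curves to stable leaves, and the full-projection property from Proposition \ref{pr:Wu_crosses}.
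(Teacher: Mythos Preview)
Your argument is correct and follows essentially the same route as the paper: both proofs invoke Proposition~\ref{pr:Wu_crosses} to obtain unstable manifolds crossing $Y$, use transversality of $u$-curves to stable leaves to get a unique intersection point lying in $a$, and deduce the constancy properties directly from the definitions. Your treatment of the $C^{1+\epsilon}$ regularity in $y'$ is slightly more explicit than the paper's---you work in the straightening chart and exhibit $\varphi_y$ as $(x_1,x_2)\mapsto(x_1,h_y(x_1))$, whereas the paper simply appeals to the $C^{1+\epsilon}$ dependence of stable leaves on their base point---but the content is the same. One minor omission: you do not address measurability of $[\cdot,\cdot]$ as a function of the \emph{first} coordinate; the paper handles this in one line by noting that $W_F^u(y)$ depends measurably on $y$ (Pesin theory / nonuniform hyperbolicity), which is enough to conclude that the assignment $(y,y')\mapsto[y,y']$ is jointly measurable.
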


\begin{proof}
  From Proposition~\ref{pr:Wu_crosses} we have that for
  $\mu_Y$ almost every point $y$ the local unstable manifold
  $W^u_F(y)$ crosses $Y$.
  From the definition of geometric Lorenz attractor,
  $W^s_F(y')$ crosses $a$ transversely to
  $W^u_F(y)$, for every $y'\in Y$.
  Hence $[y,y']$ is well defined for $\mu_Y$-almost every
  $y\in Y$ and every $y'\in Y$.  Since $a$ is a union of local stable manifolds, it is immediate that if $y'\in a$ then $[y,y']\in a$.  

We note that if $[y,y']$
  is defined, then
  \begin{align*}
    w\in W^u_F(y)\mapsto [w,y']=[y,y']
    \qand
    w\in W^s_F(y')\mapsto [w,s]=[y,y']
  \end{align*}
  which shows that $[y,y']$ is constant along unstable
  manifolds on the first coordinate and stable manifolds on
  the second coordinate. In addition, the stable manifolds
  $W^s_F(y')$ depend continuously in the
  $C^{1+\epsilon}$ topology on the base point $y'$ (by the
  partial hyperbolicity of the attractor) and the unstable
  manifolds $W^u_F(y)$ depend measurably on $y$ (by
  nonuniform hyperbolicity). Hence $[\cdot,\cdot]$ is a
  measurable map and is $C^{1+\epsilon}$ along the second
  coordinate.
\end{proof}

\begin{proposition} \label{prop-periodic}
Suppose that $y,y'\in Y$ are such that $[y,y']$
 is well-defined.  Then, there is a sequence of periodic
 points $z_i\in Y$ for $F$ such that (i) $z_i\to y$, (ii)
 $[z_i,y']$ is well-defined for all $i$, and (iii)
 $[z_i,y']\to [y,y']$.
\end{proposition}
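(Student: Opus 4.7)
The plan is to combine Poincar\'e recurrence at $y$ with the full-branch Markov structure of $\bar F$ to construct $z_i$, and then to pass from~(i) to~(ii)--(iii) via continuous dependence of unstable manifolds on Pesin blocks.

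By restricting $y$ to the intersection of the full-measure set from Proposition~\ref{prop-localprod}, a Pesin block, and the Poincar\'e recurrent set for $(F,\mu_Y)$, I may assume there is $n_i\to\infty$ with $F^{n_i}(y)\to y$ in $Y$; the remaining $y$ satisfying only the hypothesis will be handled by a standard density/continuity argument. The full-branch property makes $\bar F^{n_i}\colon\alpha_0^{n_i}(\pi y)\to\bar Y$ a diffeomorphism whose inverse is a uniform contraction on $\bar Y$, with unique fixed point $\bar z_i\in\alpha_0^{n_i}(\pi y)$: an $\bar F$-periodic point of period dividing $n_i$. By~\eqref{eq:backcontraction}, $\operatorname{diam}(\alpha_0^{n_i}(\pi y))\to 0$, so $\bar z_i\to\pi y$. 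Lifting via the stable contraction of $F^{n_i}$ on $\pi^{-1}(\bar z_i)$ yields the unique $F$-periodic point $z_i\in\pi^{-1}(\bar z_i)$ projecting to $\bar z_i$.

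The crucial step is to show $z_i\to y$. Set $p_i=[y,z_i]\in W_F^u(y)\cap\pi^{-1}(\bar z_i)$. Continuity of the stable foliation and transversality of $W_F^u(y)$ with $W_F^s(y)$ at the single point $y$ give $p_i\to y$. Since $p_i$ and $z_i$ share the stable leaf $\pi^{-1}(\bar z_i)$, on which $F^{n_i}$ contracts by a factor $\le C\lambda_0^{\tau_{n_i}(\bar z_i)}$, a standard fixed-point estimate yields
\[
|p_i-z_i|\le\frac{|p_i-F^{n_i}(p_i)|}{1-C\lambda_0^{\tau_{n_i}(\bar z_i)}}.
\]
Now $F^{n_i}(p_i)\in W_F^u(F^{n_i}(y))\cap\pi^{-1}(\bar z_i)$; choosing the subsequence $n_i$ so that $F^{n_i}(y)$ remains in a fixed Pesin block containing $y$, the unstable manifolds $W_F^u(F^{n_i}(y))$ converge to $W_F^u(y)$ in the $C^{1+\epsilon}$ topology, and hence $F^{n_i}(p_i)\to W_F^u(y)\cap W_F^s(y)=\{y\}$. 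Combined with $p_i\to y$, this gives $|p_i-F^{n_i}(p_i)|\to 0$, whence $|p_i-z_i|\to 0$ and $z_i\to y$.

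For~(ii) and~(iii), the hyperbolic periodic orbits $z_i$ lie in a common Pesin block for $i$ large (since $z_i\to y$), so $W_F^u(z_i)\to W_F^u(y)$ in the $C^{1+\epsilon}$ topology. The transverse intersection $W_F^u(y)\pitchfork W_F^s(y')=\{[y,y']\}$ persists under such a perturbation, so $[z_i,y']=W_F^u(z_i)\cap W_F^s(y')$ is well-defined for large $i$ and $[z_i,y']\to [y,y']$. The main obstacle throughout is the merely measurable dependence of unstable manifolds in Pesin theory, which is circumvented by restricting the Poincar\'e recurrence to a Pesin block so that the continuous dependence on the base point is available at each limiting step.
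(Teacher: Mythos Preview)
Your construction of the periodic points and your argument for~(i) are correct, though more circuitous than the paper's: the paper directly finds $z_i$ inside an arbitrary neighborhood $U$ of $y$ by the standard closing argument (the stable leaf $\xi_i=\pi^{-1}(\bar z_i)$ is mapped into itself by $F^{n_i}$, and one checks $F^{n_i}(\xi_i\cap U)\subset U$ using that $W^u_F(F^{n_i}y)$ crosses $Y$ as a $u$-curve), so $z_i\in U$ immediately.

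There is, however, a genuine gap in your argument for~(ii) and~(iii). You assert that ``the hyperbolic periodic orbits $z_i$ lie in a common Pesin block for $i$ large (since $z_i\to y$)'' and deduce $W^u_F(z_i)\to W^u_F(y)$ in $C^{1+\epsilon}$. But membership in a Pesin block is determined by hyperbolicity constants along the \emph{entire} orbit, not by proximity of the base point; convergence $z_i\to y$ does not by itself force the periodic orbits of $z_i$ into a fixed Pesin block, and in general nonuniformly hyperbolic systems this implication fails. In the present setting the conclusion happens to be true, but for a different reason: the uniform backward contraction~\eqref{eq:backcontraction} for $\bar F$ and the uniform contraction of the stable foliation give uniform hyperbolicity for $F$ on each branch, which one can exploit directly. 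The paper bypasses Pesin-block continuity entirely. For~(ii) it shows $W^u_F(z_i)$ crosses $Y$ by noting that $\pi W^u_F(z_i)$ is $\bar F^{n_i}$-invariant and hence, by expansion, must cover $\alpha^{n_i}(y)$ and then all of $\bar Y$. For~(iii) it uses that all the $W^u_F(z_i)$ are $u$-curves (graphs with uniformly bounded slope), applies Arzel\`a--Ascoli to extract a limit curve $\gamma$ through $y$, and then uses uniform backward contraction directly to identify $\gamma$ with $W^u_F(y)$. To repair your argument you would need to replace the unjustified Pesin-block claim with one of these direct mechanisms.
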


\begin{proof}
  We use Proposition~\ref{pr:Wu_crosses}: we can assume
  without loss that $y$ is
  recurrent. Let us fix a neighborhood $U$ of $y$ given by
  $U_1\times U_2$, where $U_1$ is an open subinterval of
  $[-1,1]\setminus\{0\}$ and $U_2$ is an open subset of
  $[-1,1]$. We fix a similar smaller neighborhood
  $V=V_1\times V_2$ such that closure of $V_j$ is contained
  in $U_j, j=1,2$. We can regard $V_1$ as a neighborhood
  of $\pi y$.

  In our setting, this ensures the existence of a sequence
  $n_i\to\infty$ such that $F^{n_i}y\to y$,
  $\pi\alpha^{n_i}(y)$ is a neighborhood of $\pi y$ and
  $\pi F^{n_i} (\alpha^{n_i}(y)) = \bar Y$; see the proof of
  Proposition~\ref{pr:Wu_crosses}.

  \begin{figure}[htpb]
    \centering
    % \psfrag{p}{$y$}
    % \psfrag{q}{$z_i$}
    % \psfrag{x}{$\xi_i$}
    % \psfrag{F}{$F^{n_i}(U)$}
    % \psfrag{U}{$U$}
    % \psfrag{V}{$V$}
    \includegraphics[width=7.5cm]{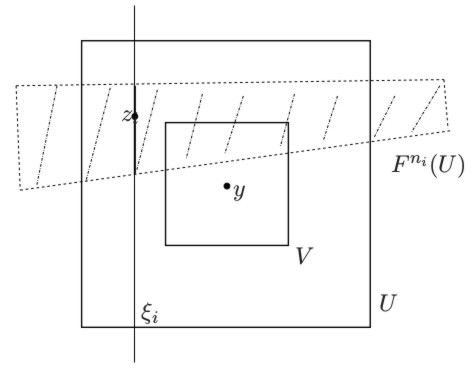}
    \caption{\label{fig:perdense} The density of periodic
      points for $F$.}
  \end{figure}

  Hence, there exists a stable leaf $\xi_i=\pi^{-1}(\bar
  x_i)\subset\alpha^{n_i}(y)$ for some $\bar x_i\in
  \pi\alpha^{n_i}(y)$ which is sent inside itself by
  $F^{n_i}$, by the uniform contraction of the stable leaves
  of $\cW_F^s$. Since we can assume without loss that
  $F^{n_i}y\in V$, then taking $n_i$ big enough, we claim
  that $F^{n_i}(\xi_i\cap U)\subset U$. Indeed, due to
  the domination assumption on $f$, the unstable manifold
  $W^u_F(F^{n_i}y)$ crosses $Y$ and its angle with
  respect to the horizontal direction is uniformly bounded
  from above, so
  $W^u_F(F^{n_i}y)\pitchfork\xi_i\subset U$, and
  the claim follows by uniform contraction of the stable
  leaves; see Figure~\ref{fig:perdense}.

  Thus we have a fixed point $z_i$ of $F^{n_i}$ in
  $\xi_i\cap U$. Since $U$ belongs to a fundamental
  system of neighborhoods of $y$, this proves item (i) in
  the statement of the Proposition.

  All periodic points of $f$ are hyperbolic of saddle type,
  thus $W^u_F(z_i)$ is well-defined and $\pi
  W^u_F(z_i)$ is a neighborhood of $\pi z_i=\bar x_i$.

  If $\pi W^u_F(z_i)\supset\pi\alpha^{n_i}(y)$, then
  clearly $ W^u_F(z_i)$ crosses $a$ and so $[z_i,y']$
  is well-defined. We claim that this is always the
  case. For otherwise, if $\pi
  W^u_F(z_i)\subset\alpha^{n_i}(y)$, then since $\bar
  F^{n_i}\mid\pi \alpha^{n_i}(y):\pi \alpha^{n_i}(y)\to
  \pi\alpha(y)$ is an expanding diffeomorphism and
  $W^u_F(z_i)$ is $F^{n_i}$-invariant, the length of
  $\bar F^{k n_i}(\pi W^u_F(z_i)), k\ge1$ grows while
  this set is contained in $\alpha^{n_i}(y)$. Thus $\pi
  W^u_F(z_i)$ covers $\pi\alpha^{n_i}(y)$, as claimed.

  Finally, for the continuity statement (iii), since
  $z_i\to y$ as $i\to\infty$ with $z_i$ periodic points of
  $F$, it is enough to show that $W^u_F(z_i)\cap a \to
  W^u_F(y)\cap a$ as smooth curves that cross
  $a$. Since each curve $W^u_F(z_i)\cap a$ is a
  $u$-curve, then there exists a accumulation point $\gamma$
  which is also a $u$-curve (by the Arzel\'a-Ascoli Theorem). We
  show that $\gamma$ contains $W^u_F(y)\cap a$.

  Indeed, $\gamma$ contains $y$.  By uniform backward
  contraction, if $z_i^0,\tilde z_i^0\in
  W^u_F(z_i)\cap a$ converge to $z,\tilde
  z\in\gamma\cap a$, then we can argue similarly to
  (\ref{eq:cont-u-curves}) since there are $z_i^k,\tilde
  z_i^k\in W^u_F(z_i)\cap a$ so that
  $z_i^0=F^k(z_i^k)$ and $\tilde z_i^0=F^k(\tilde z_i^k),
  k\ge1$. Hence, for a given fixed $k$ we get
  \begin{align*}
    |z_i^0-\tilde z_i^0|
    =
    |F^k(z_i^k)-F^k(\tilde z_i^k)|
    \ge
    \frac{\lambda^{-k}}{c_0'} |z_i^k -\tilde z_i^k|,
  \end{align*}
  and for limit points $z^k,\tilde z^k\in\gamma$ of
  $(z_i^k)_{i\ge1}$ and $(\tilde z_i^k)_{i\ge1}$ letting
  $i\to\infty$, we obtain (since $F$ is smooth in $a$)
  \begin{align*}
    |z-\tilde z|
    =
    |F^k(z_k)-F^k(\tilde z_k)|
    \ge
    \frac{\lambda^{-k}}{c_0'} |z_k-\tilde z_k|.
  \end{align*}
  Hence, because $k\ge1$ was arbitrary, we see that
  $\gamma\subset W^u_F(y)$, as needed. The proof is
  complete.
\end{proof}

\subsection{The induced roof function $R$}
\label{sec:induced-roof-functi}

We define the \emph{induced roof function} $R: Y\to\R^+$,
$R(y)=\sum_{\ell=0}^{\tau(y)-1}r(f^\ell y)$.  Since $r$, and
hence $R$, is constant along stable leaves, we also denote
by $R$ the quotient induced roof function $R:\bar Y\to\R^+$.

It follows in a completely analogous way to \cite[Section
4.2.2]{ArVar} that $R:\bar Y\to\R^+$ satisfies
\begin{align}\label{eq:bdd-norm-roof}
  \sup_{h\in\cH}\sup_{y\in\bar Y}|D(R\circ h)(y)|<\infty
\end{align}
where $\cH$ is the set of all inverse branches of $\bar
F:a_0\to\bar Y, a_0\in\alpha_0$. 

Indeed, let $h\in\cH$, $h:\bar Y\to a_0$ be an inverse
branch of $\bar F$ with inducing time $l=\tau(a_0)\ge1$ and
let us fix $y\in a_0$. Then 
\begin{align*}
  |D(R\circ h)(y)|
  &=
  |DR ( h(y) )|\cdot |Dh(y)|
  =
  \frac{|DR ( h(y) )|}{|D\bar F ( h(y) ) |}
  =
  \left|
    \sum_{i=0}^{l-1} \frac{(Dr\circ \bar f^i)\cdot D\bar f^i
    }{D\bar F}\circ h(y)
    \right|.
\end{align*}
Recall from the construction of the inducing partition using
hyperbolic times that conditions
\eqref{eq:backcontraction} and \eqref{eq:slow-recurrence} are satisfied
(here we require these conditions only with $n=1$).
By~\eqref{eq:backcontraction},
 \begin{align*}
 \left|\frac{D\bar f^i}{D\bar F}\right|\circ h(y) 
  \le c_0\lambda^{l-i},\quad i=0,\dots,l-1.
 \end{align*}
Moreover, by~\eqref{eq:slow-recurrence},
$|\bar f^i(h(y))|\ge \sqrt\lambda^{l-i}$ 
and so, by Lemma~\ref{le:prop2.6}(2), 
\begin{align*}
|(Dr\circ \bar f^i)\circ h(y)|\le c_1/\sqrt\lambda^{l-i}, \quad
i=0,\dots,l-1, 
\end{align*}
for some $c_1>0$ depending only on $f$. 
Altogether this implies that $ |D(R\circ h)(y)| \le
c_1\sum_{i=0}^\infty\lambda^{i/2}<\infty$ establishing~\eqref{eq:bdd-norm-roof}.

\begin{proposition} \label{prop-Rdecay}
There exists $c>0$ such that
$\mu_Y(R>t)=O(e^{-ct})$.
\end{proposition}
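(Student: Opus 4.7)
The plan is to reduce the exponential tail bound for $R$ to separate exponential tail bounds for the inducing time $\tau$ and for $\log D\bar F$. Since $R$ is constant on stable leaves of $F$ and $\pi_*\mu_Y=\mu_{\bar Y}$, it suffices to prove $\mu_{\bar Y}(R>t)=O(e^{-ct})$. The decomposition $r=h_1+h_2$ from Lemma~\ref{le:prop2.6}(2), with $h_2$ bounded and $h_1=-\lambda_u^{-1}\log|\pi(\cdot)|$, yields $R(y)=-\lambda_u^{-1}\sum_{\ell=0}^{\tau(y)-1}\log|\bar f^\ell \bar y|+O(\tau(y))$. Using Lemma~\ref{le:prop2.6}(1), the identity $\log\bar f'(x)=(\eta-1)\log|x|+\log g(x)$ with $g$ bounded and bounded away from $0$, summed along the orbit and rearranged, converts the singular sum into a log-derivative:
\begin{align*}
R(y)\le \frac{1}{\lambda_u(1-\eta)}\log D\bar F(y)+C\tau(y)=\frac{1}{\lambda_u+\lambda_s}\log D\bar F(y)+C\tau(y),
\end{align*}
where $\lambda_u+\lambda_s>0$ by the Lorenz-like hypothesis. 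It therefore suffices to control the tails of $\tau$ and $\log D\bar F$ separately.

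The exponential tail $\mu_{\bar Y}(\tau>n)=O(e^{-\alpha n})$ is classical for a full-branch Markov inducing scheme built from hyperbolic times above a uniformly expanding $C^{1+\epsilon}$-piecewise smooth map; cf.\ \cite{ALP,ArVar}. For $\log D\bar F$, I would apply Markov's inequality and show that $\int_{\bar Y}(D\bar F)^\epsilon\,d\mu_{\bar Y}<\infty$ for some small $\epsilon>0$. Bounded distortion~\eqref{eq:bdd-dist} combined with the full-branch property $\bar F(a_0)=\bar Y$ gives $D\bar F|_{a_0}\asymp |\bar Y|/|a_0|$; a change of variables along each inverse branch further yields $\int_{a_0}D\bar F\,d\mu_{\bar Y}\le C|\bar Y|$. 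Grouping the partition elements by the value of $\tau$ and applying Jensen's inequality to the concave map $x\mapsto x^\epsilon$ ($0<\epsilon<1$) on $\{\tau=n\}$,
\begin{align*}
\int_{\{\tau=n\}}(D\bar F)^\epsilon\,d\mu_{\bar Y}\le\mu_{\bar Y}(\tau=n)^{1-\epsilon}\left(\int_{\{\tau=n\}}D\bar F\,d\mu_{\bar Y}\right)^{\!\epsilon}\le C e^{-\alpha(1-\epsilon)n}L^{\epsilon n},
\end{align*}
where $L$ bounds the number of branches of $\bar f$. The geometric series in $n$ converges provided $\epsilon<\alpha/(\alpha+\log L)$, which yields $\mu_{\bar Y}(\log D\bar F>s)\le Ce^{-\epsilon s}$. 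A union bound $\{R>t\}\subset\{\tau>ct\}\cup\{\log D\bar F>ct\}$ then gives the proposition.

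The main obstacle is this tail estimate for $\log D\bar F$. The pointwise slow-recurrence bound~\eqref{eq:slow-recurrence} gives at best $\log D\bar F\le O(\tau^2)$, which would yield only a stretched-exponential tail for $R$. The probabilistic Jensen-plus-branch-count argument trades this weak pointwise bound for a tight integrated moment bound, restoring true exponential decay.
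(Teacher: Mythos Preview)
Your argument is correct. The paper's proof is a two-line citation: it quotes ${\rm Leb}(R>t)=O(e^{-ct})$ from \cite[Section~4.2.1]{ArVar} and then uses boundedness of $d\mu_{\bar Y}/d{\rm Leb}$ from \cite[Proposition~4.5]{ArVar}. You instead give a self-contained derivation. The reduction $R\le (\lambda_u+\lambda_s)^{-1}\log D\bar F + C\tau$ via Lemma~\ref{le:prop2.6} is exactly the right idea, and your H\"older-plus-branch-count estimate for $\int (D\bar F)^\epsilon\,d\mu_{\bar Y}$ is sound; the step $\int_{\{\tau=n\}}D\bar F\,d\mu_{\bar Y}\le CL^n$ is perhaps most transparently justified by the area formula together with the fact that $\bar f^n$ is at most $L^n$-to-one (equivalently: two full branches with $\tau=n$ cannot share a monotonicity interval of $\bar f^n$, since each maps bijectively onto $\bar Y$). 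Your closing observation is also apt: the paper itself notes in Remark~\ref{rmk-simplify} that the stretched-exponential bound $O(e^{-c\sqrt t})$ coming from slow recurrence alone already suffices for every application here, so the full exponential tail is a convenience rather than a necessity.
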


\begin{proof} By~\cite[Section 4.2.1]{ArVar}, ${\rm
    Leb}(R>t)=O(e^{-ct})$.  The result follows since
  $d\mu_Y/d{\rm Leb}$ is bounded, see for
  example~\cite[Proposition~4.5]{ArVar}.  (We caution that
  our $R:Y\to\R^+$ is denoted by $r:\Delta\to\R^+$
  in~\cite{ArVar}).
\end{proof}

 \begin{remark}   \label{rmk-Rdecay}
 Proposition~\ref{prop-Rdecay} is not necessary for the results in this paper but simplifies the exposition, see Remark~\ref{rmk-simplify}. 
\end{remark}

For $y,y'\in \bar Y$ define the \emph{separation time}
$s(y,y')$ to be the least integer $n\ge0$ such that $\bar
F^n(y),\bar F^n(y')$ are in distinct partition elements of
$\alpha_0$.  For any given $\theta\in(0,1)$ we define the
\emph{symbolic metric} $d_\theta(y,y')=\theta^{s(y,y')}$ on
$\bar Y$.  Let $|R|_\theta=\sup_{y\neq
  y'}|R(y)-R(y')|/d_\theta(y,y')$ denote the Lipschitz
constant of the quotient induced roof function $R:\bar
Y\to\R^+$ with respect to $d_\theta$.

  We write $r_k(y)$ for the sum $\sum_{i=0}^{k-1}r(f^i(y))$
  in what follows.

\begin{lemma}\label{le:Lip-symbolic}
  There exists $C>0$ such that for all $y,y'\in \bar Y$ with
  $s(y,y')\ge1$ and $0\le k\le \tau(y)=\tau(y')$ we have
  $ |r_k(y)-r_k(y')|\le C|\bar F(y)-\bar
  F(y')|^\epsilon.$ As a consequence, there exists
  $\theta\in(0,1)$ such that $|R|_\theta<\infty$ and also
  $|\bar F(y)-\bar F(y')|\le C d_\theta(y,y')$.
\end{lemma}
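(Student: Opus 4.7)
The plan is to prove the main estimate $|r_k(y)-r_k(y')|\le C|\bar F(y)-\bar F(y')|^\epsilon$ by telescoping
$r_k(y)-r_k(y')=\sum_{i=0}^{k-1}[r(\bar f^i y)-r(\bar f^i y')]$
and splitting $r=h_1+h_2$ via Lemma~\ref{le:prop2.6}(2) into a H\"older part $h_2\in C^\epsilon$ and a logarithmic part $h_1(x)=-\lambda_u^{-1}\log|x|$ (both viewed on $\bar X$, since $r$ is constant on stable leaves). Since $s(y,y')\ge 1$, the points $y,y'$ share a partition element $a_0\in\alpha_0$ and a common return time $\tau(y)=\tau(y')=l$, so the backward contraction~\eqref{eq:backcontraction} and slow recurrence~\eqref{eq:slow-recurrence} estimates are available (with $n=1$) along both orbits for $i=0,\dots,l-1$.

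For the $h_2$ contribution, H\"older continuity combined with $|\bar f^i y-\bar f^i y'|\le c_0\lambda^{l-i}|\bar F y-\bar F y'|$ gives a geometric series in $\lambda^\epsilon$ that bounds the sum by $C|\bar F y-\bar F y'|^\epsilon$. For the $h_1$ contribution, which is the heart of the matter, I would apply the mean value estimate $\bigl|\log|a|-\log|b|\bigr|\le|a-b|/\min(|a|,|b|)$ together with the slow recurrence bound $|\bar f^i y|,\,|\bar f^i y'|\ge\sqrt\lambda^{\,l-i}$ to obtain
\[
|h_1(\bar f^i y)-h_1(\bar f^i y')|\le C\,\frac{\lambda^{l-i}}{\sqrt\lambda^{\,l-i}}\,|\bar F y-\bar F y'|=C\sqrt\lambda^{\,l-i}\,|\bar F y-\bar F y'|,
\]
and summing yields $C|\bar F y-\bar F y'|$, which is in turn dominated by $C'|\bar F y-\bar F y'|^\epsilon$ since $|\bar F y-\bar F y'|$ is uniformly bounded on $\bar Y$.

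For the consequences, I would take $\theta=\lambda^\epsilon$. If $s(y,y')=n\ge 1$, then $\bar F y,\bar F y'$ lie in the same element of $\alpha_0^{n-1}$, so a further application of \eqref{eq:backcontraction} combined with $\tau\ge 1$ gives $|\bar F y-\bar F y'|\le c_0\lambda^{\tau_{n-1}(\bar F y)}\operatorname{diam}(\bar Y)\le C\lambda^{n-1}\le C'\theta^n=C'd_\theta(y,y')$. Applying the first part with $k=l$ (so $r_l=R$) then yields $|R(y)-R(y')|\le C|\bar F y-\bar F y'|^\epsilon\le C''\lambda^{(n-1)\epsilon}\le C'''d_\theta(y,y')$, establishing $|R|_\theta<\infty$. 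The main obstacle throughout is the logarithmic singularity of $r$ near $\{x_1=0\}$: a naive H\"older treatment would diverge on iterates approaching the singular fiber, and what rescues the argument is the mismatch between the slow recurrence scale $\sqrt\lambda^{\,l-i}$ (keeping orbits off the singularity) and the much stronger pairwise contraction scale $\lambda^{l-i}$, whose quotient $\sqrt\lambda^{\,l-i}$ is still summable.
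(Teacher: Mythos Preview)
Your proposal is correct and follows essentially the same route as the paper: telescope $r_k$, split $r$ into its logarithmic and H\"older parts via Lemma~\ref{le:prop2.6}(2), control the H\"older part by backward contraction~\eqref{eq:backcontraction}, and control the logarithmic part by combining~\eqref{eq:backcontraction} with slow recurrence~\eqref{eq:slow-recurrence} so that the quotient $\lambda^{l-i}/\sqrt\lambda^{\,l-i}=\sqrt\lambda^{\,l-i}$ remains summable. The only cosmetic differences are that the paper carries out the computation for $k=\tau(y)$ (i.e.\ directly for $R$) rather than general $k$, and chooses any $\theta\in(\lambda^{\epsilon/2},1)$ rather than your specific $\theta=\lambda^\epsilon$; both choices work for the stated conclusions.
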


\begin{proof}
  Let us consider $y,y'\in \bar Y$ such that
  $s(y,y')=n\ge1$. Then $y'\in\alpha_0^n(y)$ and so
  $\tau(\bar F^i(y))=\tau(\bar F^i(y')), i=0,\dots,
  n-1$. Thus, from the choice of the cross-section, ensuring
  that $r$ is constant on stable leafs, together with
  Lemma~\ref{le:prop2.6}(2), we can write 
  \begin{align*}
    R(y)-R(y')
    &=
    \sum_{\ell=0}^{\tau(y)-1}[r(\bar f^\ell(y))-r(\bar f^\ell(y'))]
    \\
    &=
    \sum_{\ell=0}^{\tau(y)-1}
    \big[
    -\lambda_u^{-1}(\log|\bar f^\ell(y)|-\log|\bar f^\ell(y')|)
    +
    h(\bar f^\ell(y))-h(\bar f^\ell(y'))
    \big].
  \end{align*}
Combining (\ref{eq:backcontraction}) together
with~(\ref{eq:slow-recurrence}) we obtain
\begin{align*}
  |R(y)-R(y')|
  &\le
  C \sum_{\ell=0}^{\tau(y)-1}
  \left[
    \frac{|\bar f^\ell(y)-\bar f^\ell(y')|}{\max\{|\bar f^\ell(y)|,|\bar f^\ell(y')|\}}
      + \|h\|_\epsilon|\bar f^\ell(y)-\bar f^\ell(y')|^\epsilon
  \right]
  \\
  &\le
  C \sum_{\ell=0}^{\tau(y)-1}
  \left[
    c_0\frac{\lambda^{\tau(y)-\ell}}{\sqrt{\lambda}^{\tau(y)-\ell}}
    |\bar F(y)-\bar F(y')|
    + \|h\|_\epsilon \lambda^{\epsilon(\tau(y)-\ell)}
  |\bar F(y)-\bar F(y')|^\epsilon
    \right]
  \\
  &\le
  \kappa\lambda^{\epsilon/2}\cdot
  \frac{1-\lambda^{\frac{\epsilon}2\cdot\tau(y)}}{1-\lambda^{\epsilon/2}} 
  |\bar F(y)-\bar F(y')|^\epsilon,
\end{align*}
for some constant $\kappa>0$ depending on the flow
only. This implies in particular the first statement of the
lemma.

Finally, because $\tau$ is at least $1$, we also have
\begin{align*}
  |\bar F(y)-\bar F(y')|^\epsilon
  \le
  c_0\lambda^{\epsilon \tau_{n-1}(\bar F(y))}
  |\bar F^n(y)-\bar F^n(y')|^\epsilon
  \le
  c_0\lambda^{\epsilon (n-1)}|\bar F^n(y)-\bar F^n(y')|^\epsilon
\end{align*}
which, combined with the previous inequality, gives
another constant $K>0$ depending only on the flow satisfying
\begin{align*}
  |R(y)-R(y')|
  &\le
  K\lambda^{n\epsilon/2}|\bar F^n(y)-\bar F^n(y')|^\epsilon.
\end{align*}
We can find $\lambda^{\epsilon/2}<\theta<1$ and a
constant $C_0>0$ so that $K\lambda^{n\epsilon/2} \le
C_0\theta^{n} = C_0 d_\theta(y,y')$ for all $n\ge1$
and, since $|\bar F^n(y)-\bar F^n(y')|^\epsilon$ is
bounded above, the proof is complete.
\end{proof}

\subsection{Expansion for the flow}
\label{sec:expansion-flow}

We are now ready to prove a useful consequence of
backward contraction for the quotient map and expansion
of the flow in the linearizable region. We keep the
choice of $\theta$ from Lemma~\ref{le:Lip-symbolic}.
Also, we define $d_\theta(y,y')$ for points $y,y'\in Y$ by setting
$d_\theta(y,y')=d_\theta(\pi y,\pi y')$.

\begin{lemma}
  \label{le:expansion-flow}
  There exist constants $C,\kappa>0$ such that for all $y,y'\in Y$
  satisfying $d_\theta(y,y')<\kappa$ and
  all $u\in(0,\min\{R(y),R(y')\})$, we have
$|Z_u(y)-Z_u(y')|\le C d_\theta(y,y').$
\end{lemma}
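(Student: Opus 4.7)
The plan is to compare the two orbits $Z_u(y)$ and $Z_u(y')$ across one induced loop by tracking them through the intermediate Poincar\'e returns to $X$, and to control both the spatial deviation at a common flow time and the temporal mismatch between corresponding returns.

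First I take $\kappa>0$ so small that $d_\theta(y,y')<\kappa$ forces the separation time $n:=s(y,y')$ to be at least $1$; since $\tau$ is constant on elements of $\alpha_0$, this gives $\tau(y)=\tau(y')=:\tau$. Writing $y_k=f^k y$ and $y'_k=f^k y'$ for $0\le k\le\tau$, I locate $k\in\{0,\ldots,\tau-1\}$ with $r_k(y)\le u<r_{k+1}(y)$, set $s=u-r_k(y)$ and $s'=u-r_k(y')$, and use the triangle inequality
\[
|Z_u(y)-Z_u(y')|\le|Z_s(y_k)-Z_s(y'_k)|+\|G\|_\infty|r_k(y)-r_k(y')|,
\]
with the edge case $s'\notin[0,r(y'_k))$ absorbed by shrinking $\kappa$, which makes $|r_k(y)-r_k(y')|$ uniformly small.

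For the spatial term I work in the linearizing coordinates in the cube $[-1,1]^3$, in which $Z_t$ acts as $\operatorname{diag}(e^{\lambda_u t},e^{\lambda_{ss}t},e^{\lambda_s t})$. The strong stable direction contracts under $Z_t$ and absorbs the stable component of $y_k-y'_k$, while the unstable direction is expanded by at most $e^{\lambda_u r(y_k)}\asymp|\bar f^k y|^{-1}$. By \eqref{eq:backcontraction} with $n=1$ together with Lemma~\ref{le:Lip-symbolic},
\[
|\bar f^k y-\bar f^k y'|\le c_0\lambda^{\tau-k}|\bar F y-\bar F y'|\le C\lambda^{\tau-k}d_\theta(y,y'),
\]
and by \eqref{eq:slow-recurrence}, $|\bar f^k y|\ge\sqrt\lambda^{\tau-k}$. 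Multiplying these estimates gives
\[
|Z_s(y_k)-Z_s(y'_k)|\le C\sqrt\lambda^{\tau-k}d_\theta(y,y')\le Cd_\theta(y,y').
\]
Outside the cube the flow is smooth and the transit time back to $X$ is uniformly bounded, so distances grow by at most a fixed multiplicative constant.

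For the temporal term I revisit the term-by-term estimate in the proof of Lemma~\ref{le:Lip-symbolic}, but now use the multistep backward contraction with $n=s(y,y')$: for each $\ell\le k-1\le\tau-1\le\tau_n-1$,
\[
|r(\bar f^\ell y)-r(\bar f^\ell y')|\le C\bigl[\sqrt\lambda^{\tau_n-\ell}+\lambda^{\epsilon(\tau_n-\ell)}\bigr].
\]
Summing and using $\tau_n-\tau_1=\tau_{n-1}(\bar F y)\ge n-1$, so that $\tau_n-k+1\ge n+1$, yields $|r_k(y)-r_k(y')|\le C\mu^n$ with $\mu=\max\{\sqrt\lambda,\lambda^\epsilon\}$. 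Since $\epsilon\in(0,1)$ one has $\mu<\lambda^{\epsilon/2}<\theta$ (the constraint on $\theta$ inherited from Lemma~\ref{le:Lip-symbolic}), so $|r_k(y)-r_k(y')|\le C\theta^n=Cd_\theta(y,y')$, and combining with the spatial estimate completes the proof.

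The main obstacle is the logarithmic singularity of $r$ at $\{x_1=0\}$, which prevents any direct Lipschitz estimate of $r$ in Euclidean distance. The resolution is to combine the multistep backward contraction with slow recurrence, yielding Lipschitz-in-$d_\theta$ control on partial roof sums, and to exploit the explicit form of $Z_t$ in the linearizing coordinates to convert the same backward contraction into a spatial bound that carries an extra $\sqrt\lambda^{\tau-k}$ factor.
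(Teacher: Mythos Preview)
Your overall strategy---reduce to a single induced loop, split into a temporal mismatch controlled via the partial roof sums and a spatial deviation controlled via the linearised flow together with backward contraction and slow recurrence---is exactly the paper's approach, and your temporal estimate is correct (indeed a slight refinement of the paper's, since you invoke the multistep contraction with $n=s(y,y')$ directly).

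There are, however, two points where your write-up is thinner than the paper's and one of them is a real gap. First, the ``edge case'' where the lap number for $y'$ differs from that for $y$ cannot simply be absorbed by shrinking $\kappa$: even when $|r_k(y)-r_k(y')|$ is small, there is always a window of $u$-values for which one orbit has already hit $X$ and the other has not, and your linearised spatial bound on $|Z_s(y_k)-Z_s(y'_k)|$ needs to be argued across this return. The paper handles this by an explicit two-case split $k=k'$ versus $k=k'\pm1$, comparing to the nearer common return in the mismatched case. Second, and more seriously, your sentence ``the strong stable direction contracts under $Z_t$ and absorbs the stable component of $y_k-y'_k$'' does not suffice: for $s$ close to $0$ the factor $e^{\lambda_{ss}s}$ is essentially $1$, while the stable separation of $y_k$ and $y'_k$ can be of order $1$ (they lie on different leaves of $\cW^s_f$). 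The paper's argument instead decomposes through the quotient, inserting the points $\pi f^{k-1}y$ and $\pi f^{k-1}y'$, so that the stable discrepancy is controlled by the map contraction $\lambda_0^{k}$ accumulated over the earlier Poincar\'e returns rather than by the flow over the (possibly short) current segment. You need an analogous device; the flow contraction over time $s$ alone does not give a bound proportional to $d_\theta(y,y')$.
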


\begin{proof}
Taking $\kappa<1$, we have
  $\tau(y)=\tau(y')$.  There exist
  $k,k'\in\{1,\dots,\tau(y)\}$ such that
  \begin{align*}
    u\in[r_{k-1}(y),r_k(y)] \quad\text{and}\quad
    u\in[r_{k'-1}(y'),r_{k'}(y')].
  \end{align*}
  From Lemma~\ref{le:Lip-symbolic} we know that
  $|r_k(y)-r_k(y')|\le C d_\theta(y,y')$. Hence 
  for $d_\theta(y,y')$ small
  enough, $|r_k(y)-r_k(y')|<\inf r$ for all $0\le
  k\le\tau(y)$.  It follows that $|k-k'|\le 1$, and we
  may suppose that $k\ge k'$. Hence, there are two cases to
  consider.
  \begin{figure}[htpb]
    \centering
    % \psfrag{y}{$\bar X$}
    % \psfrag{X}{$X$}
    % \psfrag{zx}{$Z_uy'$}
    % \psfrag{zy}{$Z_uy$}
    % \psfrag{xk}{$f^ky'$}
    % \psfrag{yk}{$f^ky$}
    % \psfrag{xi}{$\xi$}
    % \psfrag{xip}{$\xi^\prime$}
    % \psfrag{px}{$\pi f^ky'$}
    % \psfrag{py}{$\pi f^ky$}
    \includegraphics[width=8.5cm]{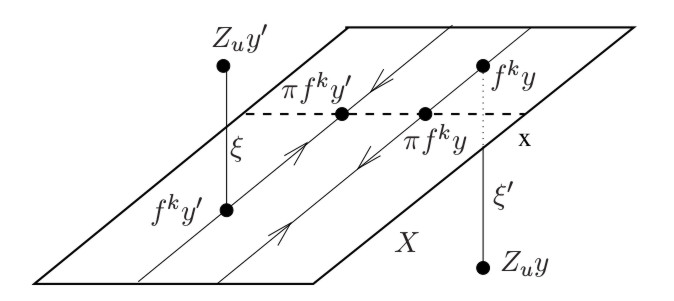}
    \caption{\label{fig:case_kprime} Estimating the
      distance between $Z_uy$ and $Z_uy'$.}
  \end{figure}
  \paragraph{\bf The case $k=k'+1$:} We have $r_{k'}(y)\le u\le r_{k'}(y')$
    and so the orbit of $y$ has already had $k'$ returns to
    the cross-section $X$, while the orbit of $y'$ has only
    returned $k'-1$ times. We estimate the distance
    between the points with the distance between the $k'th$
    returns of both orbits to $X$ as follows.  Setting
    $\xi=u-r_{k'}(y)$ and $\xi'=r_{k'}(y')-u$, we get
    \begin{align*}
      |Z_u y - Z_u y'|
      &\le
      |\xi+\xi'|+|f^ky-f^ky'|
      \\
      &\le
      |r_{k'}(y)-r_{k'}(y')|+|f^ky-\bar f^k\pi y|
      +|\bar f^k\pi y - \bar f^k\pi y'|
      +|\bar f^k\pi y' - f^ky'|
      \\
      &\le
      C d_\theta(y,y')
      + c\lambda^k(|y- \pi y|+|y'- \pi y'|)
      +c_0\lambda^{\tau(y)-k}|\bar Fy-\bar Fy'|
    \end{align*}
    where we have used the uniform contraction of the stable
    foliation of the attractor,
together with Lemma~\ref{le:Lip-symbolic}
    and the uniform backward contraction of iterates of
    $\bar f$; see Figure~\ref{fig:case_kprime}. Again from
    Lemma~\ref{le:Lip-symbolic} and the choice
    $\lambda^{\epsilon/2}<\theta<1$ we obtain
    \begin{align*}
      |Z_u y - Z_u y'| 
      \le 
      (C+2\ell +c_0 C) d_\theta(y,y')
    \end{align*}
    where $\ell$ is the length of the largest stable leaf in
    the cross-section $X$.
  \paragraph{\bf The case $k=k'$:} Now both points are past their
    $(k-1)$'th return and we again estimate the distance
    comparing with the distance of their $(k-1)$'th returns.
    Setting $\xi=u-r_{k-1}(y)$ and
    $\xi'=u-r_{k-1}(y')$ and assuming without loss that
    $\xi'\ge\xi$ we get
    \begin{align*}
      |Z_u y - Z_u y'|
      &\le
      |\xi-\xi'|+|Z_\xi(f^{k-1}y)-Z_\xi(f^{k-1}y')|
    \end{align*}
    and $|\xi-\xi'|\le Cd_\delta(y,y')$ as before, while
\begin{align*}
  |Z_\xi(f^{k-1}y)-Z_\xi(f^{k-1}y')|
  &\le
  |Z_\xi(f^{k-1}y)-Z_\xi(\pi f^{k-1}y)|
  \\
  &\quad+|Z_\xi(\pi f^{k-1}y)-Z_\xi(\pi f^{k-1}y')|
  +  |Z_\xi(\pi f^{k-1}y)-Z_\xi( f^{k-1}y')|.
\end{align*}
The uniform contraction along stable leaves and the relation
$\pi f = \bar f\pi$ allows us to write
\begin{align*}
  |Z_\xi(f^{k-1}y)-Z_\xi(f^{k-1}y')|
  &\le
  c\lambda^\xi\cdot c\lambda^{k-1} \cdot 2\ell
  + |Z_\xi(\pi f^{k-1}y)-Z_\xi(\pi  f^{k-1}y')|
\end{align*}
and since $\lambda^{k-1}\le d_\theta(y,y')$, we are left
to prove that
\begin{align}\label{eq:expansionflow}
   |Z_\xi(\pi f^{k-1}y)-Z_\xi(\pi  f^{k-1}y')|\le Cd_\theta(y,y').
\end{align}
For this we use the construction of the geometric
Lorenz attractor with the linearizable Lorenz-like
singularity to explicitly calculate trajectories. In
this way, we easily see that the distance between the
pair of stable leaves $\zeta=\pi^{-1}(\pi f^{k-1}y)$
and $\zeta'=\pi^{-1}(\pi f^{k-1}y')$ (on $X$) is
expanded by $e^{\lambda_1 t}$, that is,
\begin{align*}
  d(Z_t\zeta,Z_t\zeta')\ge e^{\lambda_1 t} d(\zeta,\zeta'),
\end{align*}
as long as $Z_s(\pi f^{k-1}y)$ and $Z_s(\pi f^{k-1}y')$
remain in the linearizable region around the
singularity, for $0\le s\le t$.

The flight time from $X$ to $X'$ (the boundary of the
linearizable region) is given by $-\log|\pi f^{k-1}y|$
and $-\log|\pi f^{k-1}y'|$, and their difference is
uniformly bounded since, by the backward contraction
(\ref{eq:backcontraction}) and
slow recurrence (\ref{eq:slow-recurrence}) properties,
\begin{align*}
 \left |\log\frac{|\pi f^{k-1}y|}{|\pi f^{k-1}y'|}\right|
 &\le
 \frac{|\bar f^{k-1}(\pi y)-\bar f^{k-1}(\pi y')|}
 {\max\{|\bar f^{k-1}(\pi y)|,|\bar f^{k-1}(\pi y')|\}}
 \le
 c_0\frac{\lambda^{\tau(y)-k+1}}{\sqrt{\lambda}^{\tau(y)-k+1}}
 =c_0\lambda^{(\tau(y)-k+1)/2}.
\end{align*}
Hence, we have expansion of the distance between
$\zeta,\zeta'$ in the linear region, and the flow from
$X'$ back to $X$ is performed in a uniformly bounded
time for all points of the attractor. Thus, this last
non-linear action of the flow distorts the distance by
at most some constant factor (a bound on the norm of
the derivative of the flow on a bounded interval of
time). Therefore, we have shown that the left hand side of
(\ref{eq:expansionflow}) is bounded by a constant
factor of $|\bar f^ky-\bar f^ky'|$. This last difference
is bounded by $c_0\lambda^{\tau(y)-k}|\bar Fy-\bar
Fy'|$ which is bounded by the expression on the right
hand side of (\ref{eq:expansionflow}).
The proof is complete.
\end{proof}

\subsection{Suspension flow}
\label{sec-suspension}

In Subsection~\ref{sec:lorenz-attractors-as}, we saw that the
geometric Lorenz flow can be modelled as a suspension flow 
$S_t:X^r\to X^r$ where $X$ is the Poincar\'e section and $r$ is the first hit time.

Shrinking the cross-section to $Y$ and using the induced roof function
$R$ (which need not be the first hit time), we have the alternative model
of the geometric Lorenz flow as the suspension flow
$S_t:Y^R\to Y^R$ 
over a uniformly hyperbolic map $F:Y\to Y$ with 
integrable but unbounded return time function $R:Y\to\R^+$.
Again the suspension flow is given by $S_t(y,u)=(y,u+t)$ computed
modulo identifications, and  the probability measure
$\mu=\mu_Y\times{\rm Lebesgue}/\int R\,d\mu_Y$ is $S_t$
invariant.

Similarly, we define the
quotient suspension semiflow $\bar S_t:\bar Y^R\to\bar Y^R$ with
invariant probability measure
$\bar\mu=
\mu_{\bar Y}\times{\rm Lebesgue}/\int R\,d\mu_{\bar Y}$.

The next result enables us to pass from the ambient manifold $\R^3$ to $Y^R$ by means of the projection $p:Y^R\to \R^3$, $p(y,u)=Z_uy$.

\begin{proposition} \label{prop-p}
Let $(y,u),\,(y',u')\in Y^R$.   Then 
\[
|p(y,u)-p(y',u')|\le C\{d_\theta(y,y')+|u-u'|\}.
\]
\end{proposition}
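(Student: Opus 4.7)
The plan is to deduce the estimate from Lemma~\ref{le:expansion-flow} combined with the obvious Lipschitz bound in the time variable, handling separately the case where $u$ exceeds $\min\{R(y),R(y')\}$. Let $M=\sup_{x\in U}\|G(x)\|<\infty$, so that $|Z_ax-Z_bx|\le M|a-b|$ for any $x\in U$ and any times $a,b$ with the flow remaining in $U$.

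The first step is to separate the dependence on time from the dependence on the base point by the triangle inequality:
\[
|p(y,u)-p(y',u')|\le |Z_uy-Z_uy'|+|Z_uy'-Z_{u'}y'|\le |Z_uy-Z_uy'|+M|u-u'|.
\]
It therefore suffices to prove $|Z_uy-Z_uy'|\le C\,d_\theta(y,y')$. If $d_\theta(y,y')\ge\kappa$, where $\kappa$ is the constant from Lemma~\ref{le:expansion-flow}, compactness of the attractor makes the estimate trivial, so we may assume $d_\theta(y,y')<\kappa$. Assume without loss of generality $R(y')\le R(y)$. If $u\le R(y')$, then $u\in(0,\min\{R(y),R(y')\})$ and Lemma~\ref{le:expansion-flow} gives the conclusion directly.

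The only remaining case is $R(y')<u\le R(y)$, and this is the one point requiring a small amount of care. Here we intercalate the time $R(y')$ and bound
\[
|Z_uy-Z_uy'|\le |Z_uy-Z_{R(y')}y|+|Z_{R(y')}y-Z_{R(y')}y'|+|Z_{R(y')}y'-Z_uy'|.
\]
The two outer terms are each at most $M(u-R(y'))\le M(R(y)-R(y'))\le M|R|_\theta d_\theta(y,y')$ by Lemma~\ref{le:Lip-symbolic} (noting that $d_\theta(y,y')<\kappa<1$ forces $y,y'$ to lie in the same element of $\alpha_0$ so the symbolic Lipschitz estimate on $R$ applies). The middle term is bounded by $C\,d_\theta(y,y')$ by a direct application of Lemma~\ref{le:expansion-flow} at time $u=R(y')\in(0,\min\{R(y),R(y')\}]$.

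The only potential obstacle is the interval $(R(y'),R(y)]$, where the two points $y,y'$ are on fibres of different length in $Y^R$; but since $R$ is Lipschitz in the symbolic metric by Lemma~\ref{le:Lip-symbolic}, this interval itself has length $O(d_\theta(y,y'))$ and the flow's bounded speed $M$ absorbs the discrepancy. Combining the three estimates yields $|Z_uy-Z_uy'|\le(2M|R|_\theta+C)d_\theta(y,y')$, which together with the time Lipschitz bound completes the proof.
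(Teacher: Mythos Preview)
Your proof is correct and follows essentially the same strategy as the paper: split via the triangle inequality into a time-variation term (controlled by the bounded vector field) and a base-point term (controlled by Lemma~\ref{le:expansion-flow}).

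However, your case analysis for $R(y')<u\le R(y)$ is unnecessary extra work. The paper avoids it entirely by ordering the time variables first: assume without loss $u\le u'$, and then intercalate at $(y',u)$ rather than at $(y,u')$. Since $(y',u')\in Y^R$ forces $u'\le R(y')$, one automatically gets $u\le u'\le R(y')$, so $u\le\min\{R(y),R(y')\}$ and Lemma~\ref{le:expansion-flow} applies directly with no need to invoke the Lipschitz bound $|R|_\theta$ on the roof function. Your argument is sound, just slightly longer than needed.
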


\begin{proof}
Without loss, we can suppose that $u\le u'$.
By the mean value theorem, there is a $u''$ between $u$ and $u'$ such that
\begin{align*}
|p(y',u)-p(y',u')|=|Z_uy'-Z_{u'}y'|\le 
|\partial_t Z_t(y')|_{t=u''}||u-u'|=|G(Z_{u''}y')||u-u'|,
\end{align*}
where $G$ is the underlying vector field.  Since $G$ is continuous and we are restricting to $y$ lying in the compact attractor $\Lambda$, we obtain that there is a constant $C>0$ such that
$|p(y',u)-p(y',u')|\le  C|u-u'|$.

Also by Lemma~\ref{le:expansion-flow},
$|p(y,u)-p(y',u)|  = |Z_u(y)-Z_u(y')|\le Cd_\theta(y,y')$.

The result follows by the triangle inequality.
\end{proof}

% Let $F_\theta(Y^R)$
% denote the space of continuous bounded observables $v:Y^R\to\R$
% that are Lipschitz with respect to the 
% pseudo-metric $d_\theta(y,y')+|u-u'|$ and define
% $\|v\|_\theta=|v|_\infty+|v|_\theta$ where $|v|_\theta$ is
% the Lipschitz constant of $v$.
% Also, let $F_{\theta,k}(Y^R)$ denote the space of observables
% $v$ such that $\partial_t^jv\in F_\theta(Y^R)$ for each
% $j=0,1,\dots,k$ where $\partial_t$ is the derivative along
% the flow direction.  Let $\|v\|_{\theta,k}=\max_{0\le j\le
  % k}\|\partial_t^jv\|_\theta$.

% Suppose that $v:\R^3\to\R$ is a smooth observable and consider the lift
% $v\circ p:Y^R\to\R$.  
% Proposition~\ref{prop-p} implies that $v\circ p$ projects to an element of $F_{\theta,k}(Y^R)$.
% 
% In the same way, we define the 
% space $F_{\theta,k}(\bar Y^R)$ of observables
% $v:\bar Y^R\to\R$ that are $C^k$ in the semiflow direction with derivatives
% that are Lipschitz with respect to the pseudo-metric
% pseudo-metric $d_\theta(y,y')+|u-u'|$, with norm denoted
% by $\|v\|_{\theta,k}$.

\section{Temporal distortion function}
\label{sec:tempor-distort-funct}

In this section, we introduce the temporal distortion
function and prove a result about the dimension of its
range.

For all
$y,z\in Y$ belonging to the same unstable manifold for
$F:Y\to Y$, we define
\begin{align*}
  D_0(y,z)=\sum_{j=1}^\infty [r(f^{-j}y)-r(f^{-j}z)].
\end{align*}

We remark that each term in the sum makes sense since
$f$ is invertible on the attractor. Moreover we note that
property (C) ensures that the roof function can be seen as a
$C^{1+\epsilon}$ function on $\bar X$ with a logarithmic
singularity at $0$. We now prove that $D_0$ is
well-defined.

\begin{lemma}\label{lem-D0}  
  The function $D_0$ is measurable and $D_0(y,z)$ is finite
  for $\mu_Y$-almost every $y$ and every $z\in
  W_F^u(y)$.

  Moreover, $D_0$ is continuous in the following sense.
  Suppose that $D_0(y,z)$ is well-defined and
  $\epsilon>0$ is given. Then, there exists $\delta>0$
  such that $|D_0(y',z')-D_0(y,z)|<\epsilon$ for all
  pairs $(y',z')$ satisfying (i) $D_0(y',z')$ is
  well-defined, (ii) $|y'-y|<\delta$, $|z'-z|<\delta$,
  and (iii) $d_\theta(y',y)<\delta$,
  $d_\theta(z',z)<\delta$.
\end{lemma}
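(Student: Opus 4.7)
I would base the argument on the decomposition $r=h_1+h_2$ from Lemma~\ref{le:prop2.6}(2), where $h_1(x)=-\lambda_u^{-1}\log|\pi x|$ isolates the log singularity and $h_2\in C^\epsilon(X)$. Since $z\in W^u_F(y)\subset W^u_{loc,f}(y)$ (as shown inside the proof of Proposition~\ref{pr:Wu_crosses}), backward iteration of $f$ contracts uniformly, $|f^{-j}y-f^{-j}z|\le C\lambda^j$, so the $h_2$-contribution to the series is dominated by $\sum_j C\lambda^{j\epsilon}$ and converges absolutely, with a bound uniform over bounded pairs on a common local unstable manifold. For the $h_1$-contribution, the mean value theorem together with Lipschitzness of $\pi$ gives
\[
|h_1(f^{-j}y)-h_1(f^{-j}z)|\le\frac{C\lambda^j}{\min\{|\pi f^{-j}y|,|\pi f^{-j}z|\}},
\]
so summability reduces to a polynomial-in-$\lambda$ lower bound on $|\pi f^{-j}y|$.

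To obtain the latter for $\mu_Y$-a.e.\ $y$, I would use $f$-invariance of $\mu_X$ combined with boundedness of the density of its projection $\mu_{\bar X}$ on $\bar X$, which yield $\mu_X\{x:|\pi f^{-j}x|<\eta\}\le C\eta$. Taking $\eta=\lambda^{j/2}$ and applying Borel--Cantelli, for $\mu_Y$-a.e.\ $y$ one has $|\pi f^{-j}y|\ge\lambda^{j/2}$ for all $j$ beyond some $j_0(y)$, so the $h_1$-series is bounded by $C\sum_j\lambda^{j/2}<\infty$. This proves the finiteness claim. Measurability of $D_0$ is then automatic: each partial sum $\sum_{j=1}^{N}[r(f^{-j}y)-r(f^{-j}z)]$ is measurable in $(y,z)$ on the set where the backward orbits avoid the singular leaf, and $D_0$ is their a.e.\ pointwise limit.

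For the continuity statement, fix $(y,z)$ with $D_0(y,z)$ finite and $\epsilon>0$. Split the difference into a head $j\le N$ and tails $T_N(y,z)$, $T_N(y',z')$; by the preceding bounds choose $N$ so that $|T_N(y,z)|<\epsilon/3$. For the head, well-definedness of $D_0(y,z)$ forces $\{f^{-j}y,f^{-j}z\}_{j\le N}$ to avoid the singular leaf, so each $r\circ f^{-j}$ is continuous there in the Euclidean metric and the head contribution is less than $\epsilon/3$ provided $\delta$ in~(ii) is small. The remaining task is to ensure $|T_N(y',z')|<\epsilon/3$ uniformly in admissible $(y',z')$. Here I would invoke property~(C) together with the local product structure of Proposition~\ref{prop-localprod}: since $r$ is constant along $f$-stable leaves, replacing $y'$ by $w=[y',y]\in W^u_F(y')\cap W^s_F(y)$ (and analogously $z'$ by $[z',z]$) leaves every summand unchanged, but places the modified arguments on unstable leaves through points close to $y$ and $z$, so the $h_2$-tail transfers uniformly via the backward contraction on $W^u_{loc,f}$. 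The $h_1$-tail is the delicate part: the hypothesis $d_\theta(y',y)<\delta$ forces the forward $\bar F$-itineraries of $\pi y$ and $\pi y'$ to agree over a range growing as $\delta\to 0$, and combined with the assumption that $D_0(y',z')$ is itself well-defined, this transfers the Borel--Cantelli lower bound $|\pi f^{-j}y|\gtrsim\lambda^{j/2}$ to $|\pi f^{-j}y'|$ up to a constant factor for the indices at stake. I expect this quantitative transfer of slow recurrence from $y$ to $y'$, making full use of all three hypotheses (Euclidean closeness, symbolic closeness in $d_\theta$, and well-definedness of $D_0(y',z')$), to be the main technical obstacle in the continuity argument.
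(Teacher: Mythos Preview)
Your finiteness argument via $r=h_1+h_2$ and Borel--Cantelli can be made to work, but the continuity argument has a real gap that you yourself flag as ``the main technical obstacle'' without resolving. The tail bound $|T_N(y,z)|<\epsilon/3$ rests on $|\pi f^{-j}y|\ge\lambda^{j/2}$ for $j\ge j_0(y)$, where $j_0$ is only an a.e.-defined measurable function with no continuity in $y$; for nearby $y'$ you therefore have no uniform control on $|T_N(y',z')|$. Your proposed transfer mechanism does not work: the hypothesis $d_\theta(y,y')<\delta$ constrains the \emph{forward} $\bar F$-itineraries, whereas $T_N$ depends on \emph{backward} $f$-orbits, so symbolic closeness is the wrong tool here. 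Moreover, the specific claim that replacing $y'$ by $w=[y',y]\in W^u_F(y')\cap W^s_F(y)$ ``leaves every summand unchanged'' is false: $w$ lies on the stable leaf of $y$, so constancy of $r$ along stable leaves gives $r(f^{-j}w)=r(f^{-j}y)$, not $r(f^{-j}y')$.

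The paper sidesteps this entirely by grouping the backward $f$-sums into blocks along the induced map $F=f^\tau$ and rewriting
\[
D_0(y,z)=\sum_{i\ge1}\bigl[R(\bar y_i)-R(\bar z_i)\bigr],
\]
where $\bar y_i,\bar z_i\in\bar Y$ are the projections of the backward $F$-iterates, arranged so that $\bar y_i,\bar z_i$ lie in the same partition element of $\alpha_0$. Since $R$ is $d_\theta$-Lipschitz on $\bar Y$ (Lemma~\ref{le:Lip-symbolic}, which already packages the slow-recurrence estimate~\eqref{eq:slow-recurrence} into a uniform bound) and $s(\bar y_i,\bar z_i)=i+s(y,z)$, each summand is at most $|R|_\theta\,\theta^i d_\theta(y,z)$, so the tail beyond $N$ is bounded by $C\theta^N$ \emph{uniformly over all admissible pairs}. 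That uniformity is exactly what makes the continuity go through: the head is then handled by showing that Euclidean closeness of $(y',z')$ to $(y,z)$ forces the first $N$ backward partition elements to coincide, whence $s(\bar y_i,\bar y'_i)=i+s(y,y')$ and the head is controlled by $Cd_\theta(y,y')+Cd_\theta(z,z')$. The key idea you are missing is to trade the term-by-term analysis of $r\circ f^{-j}$ for the induced roof function $R$, converting your pointwise Borel--Cantelli control into a uniform symbolic Lipschitz estimate.
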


\begin{proof}  
Although
  the iterates $f^{-j}y$, $f^{-j}z$ are close by backward contraction, the values $r(f^{-j}x)$ and $r(f^{-j}y)$ need not be close. 
Hence, we consider the induced map
  $F:Y\to Y$ and the induced roof function $R:Y\to\R^+$
  given by $R(y)=\sum_{\ell=0}^{\tau(y)-1}r(f^\ell
  y)$.  Note however that $F$ is not invertible (unlike
  $f$) so some care is needed in the following
  argument.

Write $y_0=y$, $z_0=z$.
By ergodicity of $\mu_Y$ under $F$,  we may suppose that there exist $y_i\in Y$, and $a_i\in\alpha$, $i\ge1$, such that $y_i\in a_i$ and $Fy_i=y_{i-1}$
for all $i\ge1$.
Since $\bar F$ is full branch, $F(a_1\cap W^u_F(y_1))$ covers $Y$
and in particular covers $W^u_F(y_0)$.
Hence there exists $z_1\in a_1\cap W^u_F(y_0)$ such that $Fz_1=z_0$.
  Inductively, we obtain $z_i\in a_i\cap W^u_F(y_i)$, $i\ge1$, such that
  $Fz_i=z_{i-1}$.  

  By construction, $y_i=f^{-\tau(y_i)}y_{i-1}$.
  Inductively,
  $y_i=f^{-(\tau(y_1)+\dots+\tau(y_i))}y$.  Hence
\begin{align*}
  R(y_i)=\sum_{\ell=0}^{\tau(y_i)-1}r(f^\ell
  f^{-(\tau(y_1)+\dots+\tau(y_i))}y)
  =\sum_{\ell=\tau(y_1)+\dots+\tau(y_{i-1})+1}^{\tau(y_1)+\dots+\tau(y_i)}r(f^{-\ell}
  y).
\end{align*}
Formally summing up the contributions from $y_i$ and similarly $z_i$, we obtain the equivalent definition
  $D_0(y,z)= \sum_{i=1}^\infty [R(y_i)-R(z_i)]$.
  Moreover, since $R$ is constant along stable leaves,
  writing $\bar y_i=\pi y_i,\bar z_i=\pi z_i$, 
  \begin{align} \label{eq-altD0}
    D_0(y,z)=
    \sum_{i=1}^\infty [R(\bar y_i)-R(\bar z_i)].
  \end{align}

  To justify these formal manipulations, it suffices to
  prove that the series in~\eqref{eq-altD0} converges.
  In the process, we verify the first statement of the
  lemma.  Recall from Lemma~\ref{le:Lip-symbolic} that
  we can choose $\theta\in(0,1)$ so that $R$ is
  $d_\theta$-Lipschitz with Lipschitz constant
  $|R|_\theta$.  We have $s( y_i,z_i)=i+s(y,z)$ for
  $i\ge0$ and so
\begin{align*}
  |D_0(y,z)|
  & \le
  \sum_{i=1}^\infty |R(\bar y_i)-R(\bar z_i)|
  \le
  \sum_{i=1}^\infty |R|_\theta d_\theta(y_i,z_i)
  \\ 
  & = |R|_\theta\sum_{i=1}^\infty\theta^i d_\theta(
  y_0,z_0)
  = |R|_\theta \theta(1-\theta)^{-1} d_\theta(y,z)
  <\infty
\end{align*}
as required.

It remains to prove the last statement of the lemma.
Let $N\ge1$.  By the above argument,
\[
D_0(y,z)-D_0(y',z')=A(y,z)-A(y',z')+B(y,y')-B(z,z'),
\]
where
\[
A(y,z)=\sum_{i=N}^\infty|R(\bar y_i)-R(\bar z_i)|, \quad
B(y,y')=\sum_{i=0}^{N-1}|R(\bar y_i)-R(\bar y'_i)|.
\]
Moreover, $A(y,z),\,A(y',z')\le C \theta^N$.

For $(y',z')$ sufficiently close to $(y,z)$, the sequences of
partition elements $a_i$ containing $y_i,z_i$
and $a_i'$ containing $y'_i,z'_i$ coincide for $i=1,\dots,N$.
Hence $s(y_i,y_i')=i+s(y,y')$ and
$s(z_i,z_i')=i+s(z,z')$ for $i=1,\dots,N$ and so
\[
B(y,y')\le Cd_\theta(y,y'), \quad
B(z,z')\le Cd_\theta(z,z').
\]

Given $\epsilon>0$, we choose $N$ so that $C\theta^N < \epsilon/4$.
Then we choose $(y',z')$ so close to $(y,z)$ that 
(i) $a_i'=a_i$ for $i=1,\dots,N$,
(ii) $Cd_\theta(y,y')<\epsilon/4$, and
(iii) $Cd_\theta(z,z')<\epsilon/4$.
Then $|D_0(y,z)-D_0(y',z')|<\epsilon$ as required.
\end{proof}

\subsection{A double inducing scheme} \label{sec-double}
As in~\cite{LMP05}, the second iterate $y_1=f^2(0+)$ plays an important role in
establishing mixing properties.   With that in mind, we consider two
inducing schemes $F_i=f^{\tau_i}:Y_i\to Y_i$, $i=1,2$, whose quotients $\bar F_i:\bar Y_i\to\bar Y_i$ are $C^{1+\epsilon}$ piecewise expanding maps with full branches.  By the l.e.o.\ condition, we can choose $\bar Y_0$ and $\bar Y_1$ to be disjoint open intervals containing $0$ and $y_1$ respectively.
Setting $Y=Y_0\cup Y_1$, we obtain a combined (nonergodic) inducing scheme
$F=f^\tau:Y\to Y$ where $F|_{Y_i}=F_i$, $\tau|_{Y_i}=\tau_i$.  
The partition $\alpha$ on $Y$ is the union of the partitions on $Y_0$ and $Y_1$.

By Proposition~\ref{pr:Wu_crosses}, for almost every $y\in Y_0$ there is a local unstable manifold $W_F(y)$ that covers $Y_0$, and then by
Proposition~\ref{prop-localprod} the product $[y,y']$ is defined 
for every $y'\in Y_0$.   The same statement holds with $Y_0$ changed to $Y_1$.
In particular, Lemma~\ref{lem-D0} goes through for this inducing scheme by considering points in $Y_0$ and $Y_1$ separately.  (By convention, $D_0(y,z)$ is never defined for $y,z$ lying in distinct connected components of $Y$.)

Throughout most of the remainder of this section, until Subsection~\ref{sec-Hdim}, we work with this inducing scheme. 

\subsection{Young tower from the inducing scheme}
\label{sec:young-tower-from}

To the inducing scheme $F=f^\tau:Y\to Y$ constructed in Subsection~\ref{sec-double}, we associate the
Young tower~\cite{Yo98} $\hat f:\Delta\to\Delta$ where
$\Delta=\{(y,\ell):y\in Y,\,\ell=0,1,\dots,\tau(y)-1\}$
and $\hat f(y,\ell)=\begin{cases} (y,\ell+1), & \ell\le \tau(y)-2 \\
  (Fy,0), & \ell=\tau(y)-1\end{cases}$.  Note that $F=\hat
f^\tau$ is a first return map to $Y$.  The projection
$\pi:\Delta\to Y$, $\pi(y,\ell)=f^\ell y$, defines a
semiconjugacy between $\hat f:\Delta\to\Delta$ and $f:X\to
X$.  Let $\hat r=r\circ\pi:\Delta\to\R$ denote the lifted
roof function.  The partition $\alpha$ of $Y$ extends to a
partition
$\hat\alpha=\{a\times\ell:a\in\alpha,\,0\le\ell<\tau|_a\}$
of $\Delta$.
Let $\Delta_\ell=\{(y,\ell):y\in Y,\,0\le\ell<\tau(y)\}$
denote the $\ell$'th level of the tower.
We write $\Delta_\ell=\Delta_{\ell,0}\cup\Delta_{\ell,1}$ where
$\Delta_{\ell,i}=\{p=(y,\ell)\in\Delta_\ell:y\in Y_i\}$.

Fix $\ell\ge0$, $i\in\{0,1\}$.
For $p=(y,\ell)\in\Delta_{\ell,i}$ we define the stable and unstable manifolds
$W^s(p)=W^s_F(y)\times\ell$,
$W^u(p)=W^u_F(y)\times\ell\in\Delta_{\ell,i}$.
For $p=(y,\ell)$, $q=(z,\ell)\in\Delta_{\ell,i}$ we define $[p,q]=([y,z],\ell)
\in\Delta_{\ell,i}$.
Again if $q\in \hat a$ for some $\hat a\in\hat\alpha$, then $[p,q]\in \hat a$.

We say that $p,q\in\Delta$ lie in the same unstable manifold if
$p=(y,\ell)$, $q=(z,\ell)$ lie in $\Delta_{\ell,i}$ for some $\ell,i$, and
$y,z$ lie in the same unstable manifold.
In that case we define
\begin{align*}
  D_0(p,q)=\sum_{j=1}^\infty [\hat r(\hat f^{-j}p)-\hat r(\hat f^{-j}q)].
\end{align*}
Note that 
\begin{align} \label{eq-D0}
D_0(p,q)=D_0(y,z)+\sum_{j=0}^{\ell-1}[r(f^jy)-r(f^jz)],
\end{align}
so the considerations in Lemma~\ref{lem-D0} for $D_0$ restricted to
points in $Y$ apply also to $D_0$ on $\Delta$.

For $\hat a,\,\hat a'\in\hat\alpha$ with
$\hat a,\hat a'\subset\Delta_{\ell,i}$ for some $\ell,i$,
we define the {\em temporal distortion
  function} $D:\hat a\times \hat a'\to \R$ by setting
\begin{align*}
D(p,q)
& =
\sum_{j=-\infty}^\infty[ \hat r(\hat f^jp)-\hat r(\hat f^j[p,q])-\hat r(\hat f^j[q,p])+\hat r(\hat f^jq)],
\end{align*}
for $p\in \hat a$, $q\in \hat a'$.
We note that
\begin{align*}
D(p,q) 
& = \sum_{j=-\infty}^{-1}
\big[\hat r(\hat f^jp)-\hat r(\hat f^j[p,q])-\hat r(\hat f^j[q,p])+\hat r(\hat f^jq)\big] 
  = D_0(p,[p,q])+ D_0(q,[q,p]),
\end{align*}
where the first equality follows since $r$ is constant on
stable manifolds and the second is by definition of
$D_0$; see Figure~\ref{fig:def_D}.
Hence, $D$ is almost everywhere well defined by
Proposition~\ref{prop-localprod},
Lemma~\ref{lem-D0} and~\eqref{eq-D0}. 

  \begin{figure}[htpb]
    \centering
    % \psfrag{a}{$\hat a$}
    % \psfrag{b}{$\hat a'$}
    % \psfrag{p}{$p$}
    % \psfrag{q}{$q$}
    % \psfrag{pq}{$[p,q]$}
    % \psfrag{qp}{$[q,p]$}
    % \psfrag{W}{$W^u(p)$}
    % \psfrag{V}{$W^u(q)$}
    \includegraphics[width=8cm]{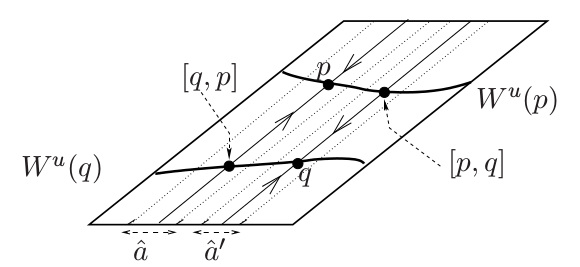}
    \caption{\label{fig:def_D} The definition of the
      temporal distortion function.}
  \end{figure}

%%%%%%%%%%%%%%%%
\subsection{Integrability and locally constant roof functions}
This section follows
closely~\cite[Appendix]{dolgopyat98}. Our purpose is to show
that the temporal distortion function is non-zero for
geometric Lorenz attractors.

\begin{proposition} \label{prop-zero} 
Let $\hat a,\,\hat a'\in\hat\alpha$ with
$\hat a,\hat a'\subset\Delta_{\ell,i}$ for some $\ell,i$.
Suppose that  $D|_{\hat a\times \hat a'}\equiv0$. Then
  for all $p\in \hat a$, $q\in \hat a'$, the function $D_0(p,[p,q])$ is
  constant along the stable manifolds of
  $p$ and $q$.
\end{proposition}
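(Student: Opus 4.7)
The plan is to exploit the identity
\[
D(p,q) = D_0(p,[p,q]) + D_0(q,[q,p])
\]
established just before the proposition. Under the hypothesis $D|_{\hat a\times\hat a'}\equiv 0$, this becomes the functional equation
\[
D_0(p,[p,q]) = -D_0(q,[q,p]) \qquad \text{for all } (p,q)\in\hat a\times\hat a'.
\]
I will then move $p$ or $q$ along its stable manifold and combine this identity with the second-coordinate invariance of the bracket from Proposition~\ref{prop-localprod}.

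The variation in $q$ is essentially free. If $q'\in W^s(q)$, then Proposition~\ref{prop-localprod} (the bracket is constant along stable manifolds in the second coordinate) gives $[p,q']=[p,q]$, so $D_0(p,[p,q'])=D_0(p,[p,q])$ without invoking the hypothesis $D\equiv 0$.

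The variation in $p$ is where the hypothesis does the work. Fix $p'\in W^s(p)$; since $\hat a$ is a union of stable leaves, $p'\in\hat a$, so the functional equation applies at $(p',q)$. Using $W^s(p')=W^s(p)$ and the second-coordinate rule once more yields $[q,p']=[q,p]$. Applying the functional equation at the pairs $(p,q)$ and $(p',q)$ then gives
\[
D_0(p',[p',q]) = -D_0(q,[q,p']) = -D_0(q,[q,p]) = D_0(p,[p,q]),
\]
which is the required constancy along $W^s(p)$.

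I do not anticipate any serious obstacle here: the brackets $[p,q]$, $[p',q]$, $[q,p]$, $[q,p']$ are all well-defined almost everywhere by Propositions~\ref{pr:Wu_crosses}--\ref{prop-localprod}, and the relevant $D_0$ values are finite by Lemma~\ref{lem-D0} together with the tower formula~\eqref{eq-D0}, so the whole argument reduces to a short algebraic manipulation of identities already in place. The genuinely nontrivial content of the proposition is thus carried entirely by the decomposition $D=D_0(\cdot,[\cdot,\cdot])+D_0(\cdot,[\cdot,\cdot])$ and the product-structure properties of $[\cdot,\cdot]$ already proved in the previous subsection.
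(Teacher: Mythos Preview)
Your proof is correct and follows essentially the same approach as the paper: both arguments use the second-coordinate stable invariance of $[\cdot,\cdot]$ from Proposition~\ref{prop-localprod} to get constancy in $q$ for free, and then invoke the hypothesis $D\equiv 0$ (via the identity $D_0(p,[p,q])=-D_0(q,[q,p])$) to transfer this to constancy in $p$. Your write-up is slightly more explicit, but there is no substantive difference.
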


\begin{proof} By Proposition~\ref{prop-localprod}, $w\mapsto
  [p,w]=[p,q]$ is constant along stable manifolds of $q$.
  Hence, $w\mapsto D_0(p,[p,w])$ is constant along the stable
  manifold of $q$.  Similarly, $w\mapsto D_0(q,[q,w])$ is
  constant along the stable manifold of $p$.  But $D|_{\hat a\times \hat a'}\equiv0$ implies
  that these two expressions are equal up to a minus sign
  and the result follows.
\end{proof}

For each $\hat a\in\hat\alpha$ with $\hat a\subset\Delta_{\ell,i}$, we associate a point $q_{\hat a}\in\Delta_{\ell,i}$.
Then $[p,q_{\hat a(p)}]$ is defined for almost every $p\in\Delta$
(here, $\hat a(p)$ is the partition element in $\hat\alpha$ containing $p$).
Define $\chi,u:\Delta\to\R$ by setting
\begin{align} \label{eq-chi}
\chi(p) & =D_0(p,[p,q_{\hat a(p)}])=\sum_{j=1}^\infty
\{\hat r(\hat f^{-j}p)-\hat r(\hat f^{-j}[p,q_{\hat a(p)}])\}, \\
\label{eq-u}
u(p) & = 
\sum_{j=1}^\infty
\{\hat r(\hat f^{-j}[\hat fp,q_{\hat a(Fp)}])
-\hat r(\hat f^{-j}[p,q_{\hat a(p)}])\}.
\end{align}
It follows from the definitions that
$\hat r=\chi\circ \hat f-\chi+u$ on $\Delta$.

\begin{proposition} \label{prop-u} If $D\equiv0$, then 
$\chi:\Delta\to\R$ is continuous (indeed $C^1$) on $Y\cong Y\times 0$ and
$u$ is constant on partition elements of $\Delta$.
\end{proposition}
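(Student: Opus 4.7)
The plan is to prove the two assertions in turn, both relying on the hypothesis $D\equiv 0$ (via Proposition~\ref{prop-zero}) together with the representation of $D_0$ as an infinite sum of differences of the induced roof function $R$ along backward orbits (Lemma~\ref{lem-D0}).

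\emph{Continuity of $\chi$.} Fix a partition element $\hat a = a\times 0\in\hat\alpha$ with chosen reference $q_{\hat a}$. For $p\in a$, Proposition~\ref{prop-zero} combined with $D\equiv 0$ forces $\chi(p)=D_0(p,[p,q_{\hat a}])$ to be constant on stable manifolds of $p$, so it descends through the projection $\pi$ to a function $\bar\chi$ on $\pi(a)\subset\bar Y$. By the proof of Lemma~\ref{lem-D0} (equation \eqref{eq-altD0}),
\[
\bar\chi(\bar y)=\sum_{i=1}^\infty \bigl[R(h_i(\bar y))-R(h_i(\bar y_\ast))\bigr],
\]
where $\bar y_\ast=\pi q_{\hat a}$ and $h_i$ denotes the $i$-th composite of a consistent sequence of inverse branches of $\bar F$ (its existence guaranteed by the full-branch structure of $\bar F$). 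Each $R\circ h_i$ is $C^{1+\epsilon}$ on $\bar Y$ with first derivative bounded uniformly in $i$ by~\eqref{eq:bdd-norm-roof}, and the backward contraction~\eqref{eq:backcontraction} forces $\|D(R\circ h_i)\|_\infty$ to decay geometrically in $i$. Hence the series converges together with its termwise first derivative, yielding $\bar\chi\in C^1$ on $\pi(a)$ and therefore $\chi\in C^1$ on each partition element of $Y$.

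\emph{Local constancy of $u$.} From the identity $\hat r=\chi\circ\hat f-\chi+u$ we have $u=\hat r+\chi-\chi\circ\hat f$. Take $p,p'$ in the same $\hat\alpha$-element $\hat a$. Since $q_{\hat a(p)}=q_{\hat a(p')}=q_{\hat a}$, the points $[p,q_{\hat a}]$ and $[p',q_{\hat a}]$ both lie on the stable leaf $W^s(q_{\hat a})$; since $\hat r$ is constant on stable leaves (property~(C)) and consistent inverse-branch choices in $\hat f^{-j}$ preserve stable leaves, the $j$-th terms in the sum defining $\chi(p)$ and $\chi(p')$ agree after cancellation, so $\chi(p)-\chi(p')$ equals an expression that depends only on the stable displacement between $p$ and $p'$. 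For $\hat a=a\times\ell$ with $\ell<\tau|_a-1$, the same argument applies to $\chi\circ\hat f$ since $\hat f$ preserves the $\hat\alpha$-partition, and the two contributions combine with $\hat r(p)-\hat r(p')$ (also zero along stable leaves) to give $u(p)=u(p')$. At the top of a tower ($\ell=\tau|_a-1$), $\hat fp$ lands in $Y\times 0$ in a partition element that depends on $p$; here the occurrence of $F$ (rather than $\hat f$) inside $q_{\hat a(Fp)}$ in~\eqref{eq-u} absorbs the jump via the induced full-return map, and Proposition~\ref{prop-zero} combined with $D\equiv0$ ensures that reassigning representatives across distinct $\alpha$-elements only shifts $\chi$ by an additive constant per element, which is absorbed into the locally constant function $u$.

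The main obstacle I expect is the top-of-tower case. There $\hat f$ does not preserve the $\hat\alpha$-partition, and without $D\equiv 0$ there is no reason that varying the reference points $q_{\hat a}$ across different partition elements should produce only a $p$-independent shift in $\chi$; it is precisely the vanishing of $D$, via the symmetric invariance of $D_0$ along both stable fibers in Proposition~\ref{prop-zero}, that forces this compatibility and makes $u$ genuinely constant on every $\hat\alpha$-element.
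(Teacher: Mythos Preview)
Your argument for the $C^1$ regularity of $\chi$ has a genuine gap. You write
\[
\bar\chi(\bar y)=\sum_{i=1}^\infty \bigl[R(h_i(\bar y))-R(h_i(\bar y_\ast))\bigr]
\]
with $h_i$ a ``consistent sequence of inverse branches of $\bar F$'', but no such consistent choice exists. In the representation~\eqref{eq-altD0} the points $\bar y_i$ are the $\bar F$-preimages of $\bar y$ lying in the partition elements $a_i$, and these $a_i$ are determined by the backward orbit of $y$ under the invertible Poincar\'e map $f$. Different points $y$ in the same $\alpha$-element have completely different backward $f$-orbits, hence different sequences $(a_i)$ and different inverse branches. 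So the branches $h_i$ vary only measurably with $\bar y$, and termwise differentiation of the series is not justified. This is precisely why the paper does not argue directly: it first establishes that $u$ is locally constant, then sums the relation $\hat r=\chi\circ\hat f-\chi+u$ along the tower to get $R=\chi\circ\bar F-\chi+\tilde u$ on $\bar Y$ with $\tilde u$ locally constant, and finally invokes the Liv\v{s}ic-type regularity result~\cite[Proposition~7.4]{AvGoYoc} (which uses~\eqref{eq:bdd-norm-roof}) to upgrade the measurable solution $\chi$ to a $C^1$ one.

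Your argument for local constancy of $u$ is also off track. You never use the crucial observation that $u$ is constant along local unstable manifolds \emph{by its very definition}: since $[\,\cdot\,,q]$ is constant along unstable manifolds in its first argument (Proposition~\ref{prop-localprod}), every term in~\eqref{eq-u} is unchanged when $p$ moves along $W^u(p)$ within a partition element. The paper then uses $D\equiv0$ and Proposition~\ref{prop-zero} to conclude that $\chi$ is constant along stable manifolds; since $\hat r$ and $\chi\circ\hat f$ are as well, so is $u=\hat r-\chi\circ\hat f+\chi$. Constancy along both foliations gives constancy on partition elements in one stroke, with no need to separate the top-of-tower case. Your attempt to compare $u(p)$ and $u(p')$ directly for arbitrary $p,p'$ in a partition element, and the worry about reassigning reference points at the top of the tower, both stem from missing this two-line argument.
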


\begin{proof}
By definition, $u$ is constant along local unstable manifolds $W^u(p)\cap a$
for all $p\in \hat a$, $\hat a\in\hat\alpha$.
But if $D\equiv0$, then by Proposition~\ref{prop-zero}
we have that $\chi$ is constant along stable manifolds.
Hence the same holds for $\chi\circ \hat f$.  But $\hat r$ is already
constant along stable manifolds, so we deduce that
$u=\hat r-\chi\circ \hat f+\chi$ is constant along stable manifolds.

We have shown that $u$ is constant along stable and
unstable manifolds and hence is constant on partition elements.

On $Y$, we obtain
$R=\sum_{\ell=0}^{\tau-1}\hat r\circ \hat f^\ell=\chi\circ F-\chi+\tilde u$,
where $\tilde u=\sum_{\ell=0}^{\tau-1}u\circ \hat f^\ell$ is
constant on partition elements.
Since $R$, $\chi$ and $u$ are constant along stable manifolds and hence are well-defined on $\bar Y$ we have that
$R=\chi\circ\bar F-\chi+\tilde u$ on $\bar Y$.

Restricting to $\bar Y_i$ for $i=0,1$, and recalling~\eqref{eq:bdd-norm-roof},
we note that
the map $\bar F_i:\bar Y_i\to\bar Y_i$ satisfies all the requirements
  of \cite[Proposition 7.4]{AvGoYoc} allowing us to conclude 
that $\chi_i|_{\bar Y_i}$ has a $C^1$ version.
Hence $\chi|_{\bar Y}$ has a $C^1$ version.
\end{proof}

However, this property contradicts the structure of
geometric Lorenz attractors, as follows.

\begin{theorem}   \label{thm-D}
For any geometric Lorenz flow, the temporal distortion
function $D$ is not identically zero.
\end{theorem}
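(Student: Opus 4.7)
The plan is to argue by contradiction. Suppose the temporal distortion function satisfies $D \equiv 0$. Then by Proposition~\ref{prop-u} there exist a $C^1$ function $\chi$ on $\bar Y$ and a function $\tilde u$ constant on each element of the partition $\alpha_0$ satisfying the coboundary relation $R = \chi\circ\bar F - \chi + \tilde u$ on $\bar Y$. Since $\tilde u$ is locally constant and both $R|_a$, $\bar F|_a$ are $C^{1+\epsilon}$ on every partition element $a \in \alpha_0$, differentiating on the interior of $a$ yields
\begin{equation*}
R'(y) = \chi'(\bar F y)\,\bar F'(y) - \chi'(y).
\end{equation*}
My goal is to derive a contradiction from this identity by analysing both sides as $y$ tends to the singular point $0$.

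First I identify the partition element $a \subset \bar Y_0$ whose closure has $0$ as a boundary point. For $y$ in a small one-sided neighbourhood of $0$, the orbit $(\bar f^\ell y)_{\ell \ge 0}$ shadows the orbit of $0^+$ until it first re-enters $Y = Y_0 \cup Y_1$; since $y_1 = \bar f^2(0^+) \in \bar Y_1$, the inducing time is locally constant there, and I set $N_0 := \tau|_a$. As $y \to 0^+$ in $a$ one has $\bar F y \to y_* := \bar f^{N_0}(0^+) \in \bar Y$ and $\bar f^\ell y \to \bar f^\ell(0^+)$ for $1 \le \ell < N_0$, the latter staying bounded away from $0$ in the generic case.

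Next I compute the asymptotics via Lemma~\ref{le:prop2.6}. From part (2), $r'(y) \sim -\lambda_u^{-1}/y$ whereas $r'(\bar f^\ell(0^+))$ is finite for $\ell \ge 1$; combined with the factor $\bar f'(y) \sim g(0)|y|^{\eta - 1}$ from part~(1), the expansion $R'(y) = \sum_{\ell=0}^{N_0 - 1} r'(\bar f^\ell y)(\bar f^\ell)'(y)$ is dominated by its zeroth term, giving $R'(y) \sim -\lambda_u^{-1}/y$. Similarly $\bar F'(y) = \prod_{\ell=0}^{N_0 - 1}\bar f'(\bar f^\ell y) \sim C|y|^{\eta - 1}$ for a positive constant $C = g(0)\prod_{\ell = 1}^{N_0 - 1}\bar f'(\bar f^\ell(0^+))$. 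Since $\chi'$ is bounded on the compact set $\bar Y$, substituting into the differentiated coboundary yields
\begin{equation*}
\chi'(\bar F y) = \frac{R'(y) + \chi'(y)}{\bar F'(y)} \sim -\frac{1}{\lambda_u C}\,|y|^{-\eta} \longrightarrow -\infty \qquad (y \to 0^+),
\end{equation*}
because $\eta \in (0, 1)$. On the other hand $\bar F y \to y_*$ and $\chi' \in C^0(\bar Y)$, so $\chi'(\bar F y) \to \chi'(y_*)$ is finite, producing the desired contradiction.

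The main technical hurdle is justifying the leading-order analysis, which requires the intermediate orbit points $\bar f^\ell(0^+)$ for $1 \le \ell < N_0$ to be bounded away from $0$. When $N_0 \le 2$ this is automatic; in more degenerate situations one must either restrict to a finer partition element whose intermediate orbit avoids $0$, or work with a carefully chosen sequence of partition elements accumulating at $0$, making use of the slow-recurrence estimate~\eqref{eq:slow-recurrence} and the backward contraction~\eqref{eq:backcontraction} to control the subleading terms. In every case, the core mechanism driving the contradiction is the mismatch between the $|y|^{-1}$ growth of $R'(y)$ and the weaker $|y|^{\eta-1}$ growth of $\bar F'(y)$, which forces $\chi'$ to blow up at the fixed finite point $y_*$ and so violates the $C^1$ regularity established in Proposition~\ref{prop-u}.
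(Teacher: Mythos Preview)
Your argument has a genuine gap. The partition element $a\subset\bar Y_0$ with $0\in\partial a$ and constant inducing time $\tau|_a=N_0$ does not exist for the inducing scheme to which Proposition~\ref{prop-u} applies. That scheme is built via hyperbolic times (see Section~\ref{sec:lorenz-attractors-as}), and the slow-recurrence estimate~\eqref{eq:slow-recurrence} with $i=0$ gives $|y|\ge\sqrt{\lambda}^{\,\tau(y)}$ for every $y$ in a partition element of inducing time $\tau(y)$. Hence every partition element with bounded inducing time stays at a fixed positive distance from $0$; conversely, partition elements accumulating on $0$ have $\tau\to\infty$. Your reading of $\tau$ as a first-return time to $Y_0\cup Y_1$ (so that $\tau\le 2$ near $0$) conflicts with the actual construction, where $F|_{Y_0}$ maps $Y_0$ to $Y_0$ and $\tau|_{Y_0}=\tau_0$ is the ALP inducing time.

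There is also a structural obstruction to rescuing the derivative approach. Estimate~\eqref{eq:bdd-norm-roof} says precisely that $|R'(y)/\bar F'(y)|$ is \emph{uniformly bounded} over all partition elements. Thus on any element the differentiated coboundary gives $\chi'(\bar Fy)=R'(y)/\bar F'(y)+\chi'(y)/\bar F'(y)$, which is bounded because $\chi'\in C^0(\bar Y)$ and $|\bar F'|\ge c\lambda_1>1$; no blow-up can occur. Your asymptotic $R'(y)/\bar F'(y)\sim -|y|^{-\eta}/(\lambda_u C)$ contradicts~\eqref{eq:bdd-norm-roof} directly, which is another way of seeing that the assumed element cannot exist. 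The paper's proof circumvents this by working on the tower $\Delta$ with the \emph{un-induced} relation $\hat r=\chi\circ\hat f-\chi+u$, obtaining $r(y)+r(fy)=\chi(y,2)-\chi(y,0)+\tilde u(y,0)$. The crucial (and delicate) step is showing $\chi(y,2)=\chi(f^2y,0)$ via a compatible choice of the reference points $q_{\hat a}$ and the identity $f^2[y,z_*]=[f^2y,f^2z_*]$; this transports the $C^1$ regularity of $\chi$ on $Y$ to continuity at level $2$, forcing $\tilde u$ to be constant on $U_0$ and then contradicting $r(y)\to+\infty$.
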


\begin{proof}
We adapt the strategy in~\cite{LMP05}.  Let $y_1=f^2(0+)$.  Recall that
$0$ and $y_1$ lie in the interior of $\bar Y_0$ and $\bar Y_1$ respectively.
Choose disjoint subsets $U_0,U_1\subset Y$ that are the closure of unions of partition elements such that 
$W^s(y_1)\subset \Int U_1$ while $U_0$ contains a rectangle of the 
form $[0,\delta]\times [-1,1]$.
Shrinking $U_0$ if necessary, we can ensure that $\tau|U_0>2$ and that
$f^2U_0\subset U_1$.
Fix $z_*\in U_0$ and note that $f^2z_*\in U_1$.

Let $y\in U_0$. We claim that $f^2[y,z_*]=[f^2y,f^2z_*]$. 
It is clear that $f^2(W^s(z^*))\subset W^s(f^2z^*)$, so
$f^2[y,z^*]\in W^s(f^2z^*)$. We need to show that
$f^2[y,z^*]\in W^u_F(f^2y)$.

Now $W_F^u(y)\subset W_{loc,f}^u(y)$ and so
$f^2(W_F^u(y))\subset f^2( W_{loc,f}^u(y))$ and also
$W^u_F(f^2y)\subset W^u_{loc,f}(f^2y)\subset
f^2(W^u_{loc,f}(y))$. Moreover, both $f^2(W_F^u(y))$ and
$W^u_F(f^2y)$ cross $f^2U_0$, since $W^u_F(y)$ crosses
$Y_0\supset U_0$ and $W^u_F(f^2y)$ crosses $Y_1\supset f^2
U_0$. Therefore we conclude that $f^2[y,z^*]\in
f^2(W_F^u(y)\cap U_0)=W^u_F(f^2y)\cap f^2U_0$.
This proves the claim.

Define $\chi$ and $u$ as in~\eqref{eq-chi} and~\eqref{eq-u} stipulating
$q_{(a,2)}=(z_*,2)$  and $q_{(f^2a,0)}=(f^2z_*,0)$ for $a\in U_0$.

Suppose for contradiction that $D\equiv0$.
By Proposition~\ref{prop-u}, for $y\in Y$ we have
\begin{align} \label{eq-r2}
r(y)+r(fy)
=\hat r(y,0)+\hat r(y,1)=\chi(y,2)-\chi(y,0)+\tilde u(y,0),
\end{align}
where $\chi$ is continuous on $Y\cong Y\times 0$ and $\tilde u(y,0)=u(y,0)+u(y,1)$ is constant
on partition elements.

We claim that $\chi$ is continuous on $U_0\times 2$ and
that $\lim_{y\to0^+}\chi(y,2)=\chi(y_1)$.
It then follows from~\eqref{eq-r2} that $\tilde\ell$ is constant on $U_0\times 0$
and moreover that all terms in~\eqref{eq-r2} converge as $y\to0^+$ with the
exception of $r(y)$ which diverges to $+\infty$.   This is the desired contradiction.

It remains to verify the claim.
For $y\in a$, $a\subset U_0$ we compute that
\begin{align*}
[(y,2),q_{\hat a(y,2)}]=[(y,2),(z_*,2)]=([y,z_*],2)
\end{align*}
and
\begin{align*}
[(f^2y,0),q_{\hat a(f^2y,0)}]=[(f^2y,0),(f^2z_*,0)]
=([f^2y,f^2z_*],0)
\end{align*}
so that
\[
\pi[(y,2),q_{\hat a(y,2)}]=f^2[y,z_*]=[f^2y,f^2z_*]=\pi
[(f^2y,0),q_{\hat a(f^2y,0)}].
\]
Hence
\begin{align*}
\chi(y,2) & 
= \sum_{j=1}^\infty \bigl\{\hat r(\hat f^{-j}(y,2))-
\hat r(\hat f^{-j}[(y,2),q_{\hat a(y,2)}])\bigr\}
\\ & = \sum_{j=1}^\infty \bigl\{r(f^{-j}\pi(y,2))-
r(f^{-j}\pi[(y,2),q_{\hat a(y,2)}])\bigr\}
\\ & = \sum_{j=1}^\infty \bigl\{r(f^{-j}f^2y)-
r(f^{-j}\pi [(f^2y,0),q_{\hat a(f^2y,0)}]\bigr\}
\\ & = \sum_{j=1}^\infty \bigl\{\hat r(\hat f^{-j}(f^2y,0))-
\hat r(\hat f^{-j}[(f^2y,0),q_{\hat a(f^2y,0)}]\bigr\}
=\chi(f^2y,0).
\end{align*}
The claim follows from continuity of $\chi$ on $Y$.
\end{proof}

 %%%%%%%%%%%%%%%%%%%%%%

\subsection{Smoothness of the temporal distortion function}
\label{sec:dimens-range-tempor}

\begin{proposition} \label{prop-nonzero}
There exists $\hat a,\hat a'\in\hat \alpha$ with $\hat a,\hat a'\subset\Delta_{\ell,i}$ for some $\ell,i$, and there exists $p=(y,\ell)\in\hat a$,
$p'=(y',\ell')\in\hat a'$, such that
\begin{itemize}
\item[(a)] $y$ lies in the unstable manifold of a periodic point, and similarly for $y'$.
\item[(b)] $D(p,p')\neq0$.
\end{itemize}
\end{proposition}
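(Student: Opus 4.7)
The plan is to combine three ingredients: the nontriviality of $D$ established in Theorem~\ref{thm-D}, the density of unstable manifolds of periodic points supplied by Proposition~\ref{prop-periodic}, and the continuity of $D_0$ provided by Lemma~\ref{lem-D0}.

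First I would invoke Theorem~\ref{thm-D} to select partition elements $\hat a,\hat a'\in\hat\alpha$ with $\hat a,\hat a'\subset\Delta_{\ell,i}$ for some $\ell,i$, together with points $p_0=(y_0,\ell)\in\hat a$ and $p_0'=(y_0',\ell)\in\hat a'$ at which $D(p_0,p_0')\neq 0$. Using the identity $D(p,q)=D_0(p,[p,q])+D_0(q,[q,p])$ and the fact that local unstable manifolds are defined at $\mu_Y$-almost every point, we may work on the full-measure set where all of $[y_0,y_0']$, $[y_0',y_0]$ and the trivial products $[y_0,y_0]=y_0$, $[y_0',y_0']=y_0'$ are well-defined.

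Next I would apply Proposition~\ref{prop-periodic} to the pair $(y_0,y_0)$ to obtain periodic points $z_i\in Y$ for $F$ with $z_i\to y_0$, $[z_i,y_0]$ well-defined, and $[z_i,y_0]\to[y_0,y_0]=y_0$. The proof of Proposition~\ref{prop-periodic} in fact locates the $z_i$ inside $\alpha^{n_i}(y_0)$ with $n_i\to\infty$. Analogously I would produce periodic $z_j'$ in $\alpha^{n_j}(y_0')$ with $z_j'\to y_0'$ and $[z_j',y_0']\to y_0'$. Set $\tilde y_i=[z_i,y_0]\in W^u_F(z_i)$ and $\tilde y_j'=[z_j',y_0']\in W^u_F(z_j')$; condition~(a) holds by construction. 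Since $W^s_F(y_0)$ projects to a single point of $\bar Y$ under $\pi$, the inclusion $z_i\in\alpha^{n_i}(y_0)$ forces $\tilde y_i\in\alpha^{n_i}(y_0)$, so $d_\theta(\tilde y_i,y_0)\le\theta^{n_i}\to 0$, and the convergence $\tilde y_i\to y_0$ in Euclidean distance follows from the smooth convergence $W^u_F(z_i)\cap\hat a\to W^u_F(y_0)\cap\hat a$ of $u$-curves established within the proof of Proposition~\ref{prop-periodic}. The same estimates hold for the primed sequence.

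To finish, I would feed these convergences into Lemma~\ref{lem-D0}: the identity~\eqref{eq-D0} reduces $D_0$ on $\Delta$ to $D_0$ on $Y$ plus a continuous finite sum, and combining with the formula $D(p,q)=D_0(p,[p,q])+D_0(q,[q,p])$ yields
\[
D\bigl((\tilde y_i,\ell),(\tilde y_j',\ell)\bigr)\longrightarrow D(p_0,p_0')\neq 0
\]
as $i,j\to\infty$, so condition~(b) is satisfied for any sufficiently large $i,j$. The main obstacle I anticipate is continuity of the local product map $[\cdot,\cdot]$ in its first argument, which Proposition~\ref{prop-localprod} only guarantees in a measurable sense; however, along the specific sequences produced by Proposition~\ref{prop-periodic} the required continuity is supplied by the geometric convergence of the unstable leaves proved there, so this difficulty is sidestepped.
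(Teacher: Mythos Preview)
Your proposal is correct and follows essentially the same strategy as the paper: start from a pair $(p_0,p_0')$ with $D(p_0,p_0')\neq 0$ given by Theorem~\ref{thm-D}, approximate by points on unstable manifolds of periodic orbits via Proposition~\ref{prop-periodic}, and use the continuity in Lemma~\ref{lem-D0} together with~\eqref{eq-D0} to pass to the limit. The only cosmetic difference is that the paper replaces one coordinate at a time (first $p$, then $p'$) and applies Proposition~\ref{prop-periodic} directly to the pair $(y,y')$ so that the convergence $[z_n,y']\to[y,y']$ comes straight from the statement, whereas you perturb both coordinates simultaneously and extract the needed bracket convergences from the $u$-curve convergence established inside the proof of Proposition~\ref{prop-periodic}; since $[\tilde y_i,\tilde y_j']=[z_i,y_0']$ and $[\tilde y_j',\tilde y_i]=[z_j',y_0]$ each depend on only one index, the two approaches amount to the same computation.
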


\begin{proof}
According to Theorem~\ref{thm-D} there exist $p=(y,\ell)$, $p'=(y',\ell)$ such
that $D(p,p')\neq0$. 
Let $z_n\to y$ be a sequence of periodic points as in Proposition~\ref{prop-periodic} and let $y_n=[z_n,y]$ so $y_n\to y$.
Also by Proposition~\ref{prop-periodic}, $[y_n,y']=[z_n,y']\to[y,y']$.
We have 
\[
D(y_n,y')
=D_0(y_n,[y_n,y'])+ D_0(y',[y',y_n]) 
=D_0(y_n,[y_n,y'])+ D_0(y',[y',y]).
\]
By Lemma~\ref{lem-D0}, $D_0(y_n,[y_n,y'])\to D_0(y,[y,y'])$.

Now let $q_n=(z_n,\ell)$ and $p_n=[q_n,p]=(y_n,\ell)$.
Since $D_0((a,\ell),(b,\ell))-D_0(a,b)$ is a finite sum (with $2\ell$ terms)
of continuous functions, it follows that
$D_0(p_n,[p_n,p'])\to D_0(p,[p,p'])$ and hence
that
$D(p_n,p')\to D(p,p')$.
Hence there exists $n$ such that $D(p_n,p')\neq0$ and so we can replace $p$
by the point $p_n=(y_n,\ell)$ where $y_n$ lies in the unstable manifold of the periodic point $z_n$ while maintaining condition (b).   Similarly, we can replace $p'$ by a point $(y'_n,\ell)$ where $y'_n$ lies in the unstable manifold of a periodic point.
\end{proof}

Now we fix the points $p=(y,\ell)$, $p'=(y',\ell)$ from Proposition~\ref{prop-nonzero} and
consider the map $g:W^u(p)\to\R$ given by
\[
g(q)=D(q,p')= D_0(q,[q,p'])+D_0(p',[p',q]).
\]
Since $W^u(p)$ is naturally identified with $f^\ell W_F^u(y)$ it makes sense
to speak of smoothness of $g$.

\begin{proposition} \label{prop-g}
The one-dimensional map $g:W^u(p)\to\R$ is $C^1$.
\end{proposition}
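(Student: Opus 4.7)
The first step is to simplify $g$ using the local product structure. By Proposition~\ref{prop-localprod}, the bracket is constant along unstable manifolds in the first coordinate, so $[q,p']=[p,p']$ for every $q\in W^u(p)$. Writing $p^*=[p,p']$ (a fixed point on $W^u(p)$),
\[
g(q)=D_0(q,p^*)+D_0(p',[p',q]).
\]
For the second summand, the map $\phi:W^u(p)\to W^u(p')$ defined by $\phi(q)=[p',q]$ is $C^{1+\epsilon}$ by the smoothness of the bracket in the second coordinate (Proposition~\ref{prop-localprod}). It therefore suffices to establish the following claim: whenever $p_0\in\Delta$ lies on the unstable manifold of a periodic point of $F$, the function $D_0(\cdot,p_0)$ is $C^1$ along $W^u(p_0)$. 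By the antisymmetry $D_0(a,b)=-D_0(b,a)$, this also yields the required smoothness of $\tilde q\mapsto D_0(p',\tilde q)$ on $W^u(p')$, and composing with $\phi$ gives that $g$ is $C^1$.

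To prove the claim, I would differentiate the series $D_0(q,p_0)=\sum_{j=1}^\infty[\hat r(\hat f^{-j}q)-\hat r(\hat f^{-j}p_0)]$ term by term along the one-dimensional curve $W^u(p_0)$ and show that the resulting series converges uniformly. Each term has a derivative of the form $D\hat r(\hat f^{-j}q)\cdot D(\hat f^{-j}|_{W^u(p_0)})(q)$, so the key ingredients are (i)~an exponential bound $\|D(\hat f^{-j}|_{W^u(p_0)})\|\le C\rho^j$ with $\rho\in(0,1)$, uniformly in $q$ in a small neighborhood of $p_0$; and (ii)~a uniform bound $|D\hat r(\hat f^{-j}q)|\le M$ for all $j\ge 1$ and all such $q$. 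Ingredient~(i) follows from the uniform backward contraction~\eqref{eq:backcontraction}: each backward excursion of $q$ to the base of the tower corresponds to an application of $\bar F^{-1}$, contracting by a factor at most $\lambda$, and the number of such excursions in $j$ iterations of $\hat f^{-1}$ grows linearly in $j$, with rate controlled by the bounded return times along the periodic orbit shadowed by the backward orbit of $q$.

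The main obstacle is ingredient~(ii), because by Lemma~\ref{le:prop2.6}(2) the roof function $r$ has a logarithmic singularity at $\{x_1=0\}$ and $Dr$ blows up near the singular leaf. This is precisely where the periodic-point assumption is crucial. Let $y_0=\pi p_0$ and let $\mathcal O$ denote the periodic orbit of $f$ whose unstable manifold contains $y_0$. Then $\{f^{-j}y_0\}$ shadows $\{f^{-j}z\}$ for some $z\in\mathcal O$ exponentially fast, and $\mathcal O$ is a compact set disjoint from $\{x_1=0\}$ (since a periodic orbit of the flow cannot pass through the equilibrium). Hence there is a compact neighborhood $V$ of $\mathcal O$ bounded away from $\{x_1=0\}$ on which $r$ is $C^{1+\epsilon}$ and $Dr$ is uniformly bounded. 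By backward contraction, there is $N_0$ such that $\pi(\hat f^{-j}q)\in V$ for all $j\ge N_0$ and all $q$ in a small neighborhood of $p_0$ in $W^u(p_0)$; the finitely many initial terms $j<N_0$ are bounded and smooth in $q$. Combining (i) and (ii) yields absolute uniform convergence of the differentiated series, proving the claim and hence the proposition.
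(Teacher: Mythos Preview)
Your proof is correct and follows essentially the same approach as the paper: split $g$ into two pieces using constancy of $[q,p']=[p,p']$, reduce each piece to the smoothness of $D_0$ along an unstable manifold shadowing a periodic orbit, and use that the backward orbit stays bounded away from the singular leaf so that $r$ is uniformly $C^1$ and backward contraction is uniform there. The paper works directly with $(f,r)$ rather than the tower $(\hat f,\hat r)$, which streamlines your ingredient~(i) --- once the backward trajectory is bounded away from $0$, uniform contraction of $f^{-1}$ along $W^u$ is immediate, so there is no need to count base excursions or invoke the bounded inducing times along the periodic orbit.
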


\begin{proof}
  Let $g_1(q)=D_0(q,[q,p'])= \sum_{j=1}^\infty
  r(f^{-j}q)-\sum_{j=1}^\infty r(f^{-j}[q,p'])$.  Since $[q,p']=[p,p']$ is
  independent of $q\in W^u(p)$, the second sum consists of
  constant functions.  For the first sum, note that each
  $z\in W^u(y)$ converges in backwards time to the periodic
  orbit $y$.  Since $p=f^\ell y$ and $q=f^\ell z$, the backwards trajectory $\{f^{-j}q,\;
  j\ge1\}$ is bounded away from the singularity at $0$.  It
  follows that along this trajectory $f^{-1}$ is uniformly
  contracting and $r$ is uniformly $C^1$.  (The 
  constants are uniform in $j$ but are allowed to depend on $q$.)  Hence the series
  for $(dg_1)_q:T_qW^u(p)\to\R$ is absolutely convergent
  and $g_1$ is $C^1$.

  A similar argument applies to $g_2(q)=D_0(p',[p',q])=
  \sum_{j=1}^\infty r(f^{-j}p')-\sum_{j=1}^\infty r(f^{-j}[p',q])$.  This
  time, it is the first sum that consists of constants.  The
  second sum is like the first sum in $g_1$ with $q$
  replaced by $[p',q]$  which converges in backwards
  time to the periodic orbit $y'$.  It follows that
  the dependence of $g_2$ on $[p',q]$ is $C^1$.  But
  $z\mapsto [y',z]$ is $C^1$ by the last statement of
  Proposition~\ref{prop-localprod}
and so $q\mapsto [p',q]$ is $C^1$.  Hence $g_2$ is $C^1$
  and so $g=g_1+g_2$ is $C^1$.
\end{proof}

\begin{corollary} \label{cor-g}
There exists a nonempty open set $V\subset W^u(p)\cong W_F^u(f^\ell y)$ on which $g$
restricts to a $C^1$ diffeomorphism.
\end{corollary}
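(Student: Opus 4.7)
The plan is to apply the inverse function theorem. Since $g:W^u(p)\to\R$ is $C^1$ by Proposition~\ref{prop-g} and $W^u(p)$ is a connected one-dimensional manifold, it suffices to exhibit some $q\in W^u(p)$ at which $g'(q)\neq 0$. This in turn reduces to checking that $g$ is not constant on $W^u(p)$, for then the mean value theorem immediately produces such a $q$.

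To verify non-constancy, I would evaluate $g$ at two explicit points. At $p$ itself, Proposition~\ref{prop-nonzero}(b) gives $g(p)=D(p,p')\neq 0$. For the second point, take $q_0=[p,p']$, which lies in $W^u(p)\cap W^s(p')$ and so belongs to the domain of $g$. Using the product-structure identities from Proposition~\ref{prop-localprod} — $[\cdot,\cdot]$ is constant along unstable manifolds in the first slot and along stable manifolds in the second — one obtains $[q_0,p']=[p,p']=q_0$ (since $p,q_0\in W^u(p)$) and $[p',q_0]=[p',p']=p'$ (since $p',q_0\in W^s(p')$). Substituting into
\[
g(q_0)=D_0(q_0,[q_0,p'])+D_0(p',[p',q_0]) = D_0(q_0,q_0) + D_0(p',p'),
\]
both terms vanish by the defining series for $D_0$, so $g(q_0)=0\neq g(p)$.

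Having produced a point $q\in W^u(p)$ with $g'(q)\neq 0$, the inverse function theorem yields an open neighborhood $V\subset W^u(p)$ of $q$ on which $g|_V$ is a $C^1$ diffeomorphism onto its image, as required. I do not anticipate a genuine obstacle here: the substantive content is in Theorem~\ref{thm-D} and Propositions~\ref{prop-nonzero} and \ref{prop-g}, and the corollary reduces to a brief product-structure calculation followed by a standard one-variable calculus argument.
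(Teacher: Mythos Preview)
Your proof is correct and essentially identical to the paper's: both evaluate $g$ at the two points $p$ and $[p,p']$, obtaining $g(p)=D(p,p')\neq 0$ and $g([p,p'])=0$, and conclude that the $C^1$ map $g$ has nonvanishing derivative somewhere. The only difference is that you spell out the product-structure computation showing $g([p,p'])=0$, whereas the paper simply asserts it.
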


\begin{proof}
By Proposition~\ref{prop-g}, $g$ is a $C^1$ map on $W^u(p)$.
Since $g([p,p'])=0$ and $g(p)\neq 0$ by assumption, it follows 
that $g'$ is not identically zero and the result follows.
\end{proof}

\subsection{Dimension of the range of the temporal distortion function}
\label{sec-Hdim}

If necessary, we now choose a new inducing
scheme $F^*:Y^*\to Y^*$ with $Y^*\subset \bigcup_{v\in V}W^s_f(v)$
and such that the properties in
Sections~\ref{sec:lorenz-attractors-as}
and~\ref{sec:local-product-struct} remain valid.  
(For this part of the argument it suffices to take $Y^*$ connected and $F^*$ full branch.)
The definition of $D$, and hence $g$, is unchanged since this is
defined in terms of $r$ and $f$, independent of the
choice of inducing scheme.  
Let $\alpha^*$ denote the associated partition of $Y^*$ and
choose two partition elements
$a_1,a_2\in\alpha^*$.
Define the finite subsystem
$A_0=\bigcap_{n\ge0} (F^*)^{-n}(a_1\cup a_2)$.

\begin{proposition} \label{prop-dim}
For the finite subsystem $A_0$ constructed above,
the set $D(A_0\times A_0)$ has positive
lower box dimension.
\end{proposition}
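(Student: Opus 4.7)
The plan is to exhibit a one-dimensional slice $\{D(q,z'):q\in A_0\}$ of $D(A_0\times A_0)$ of positive lower box dimension, by combining Corollary~\ref{cor-g} with the hyperbolic product structure of the finite subsystem $A_0$. First I would invoke Proposition~\ref{prop-periodic} together with the continuity of $D_0$ (Lemma~\ref{lem-D0}) and the $C^1$-dependence built into the proof of Proposition~\ref{prop-g} to replace the points $p,p'$ furnished by Proposition~\ref{prop-nonzero} with nearby periodic points $z,z'\in\Lambda$ for which $D(z,z')\ne0$ and for which $g_{z'}(q):=D(q,z')$ is still a $C^1$ diffeomorphism of some neighborhood $V_z\subset W^u(z)$ onto an interval in $\R$. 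The conclusion of Corollary~\ref{cor-g} is stable under such perturbations of the base point.

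Next I would choose the new inducing scheme $F^*:Y^*\to Y^*$ and partition elements $a_1,a_2\in\alpha^*$ so that $F^*|_{a_1\cup a_2}$ is a full two-shift horseshoe containing the orbits of both $z$ and $z'$ (so in particular $z,z'\in A_0$), and so that $a_1,a_2$ are narrow enough in the unstable direction that the unstable slice $A_0\cap W^u(z)$ is contained in $V_z$. The l.e.o.\ property of $\bar f$, the Pesin-theoretic inducing construction of \cite{ALP}, and density of periodic points in $\Lambda$ together make such a choice possible.

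By the uniform expansion and bounded distortion of $\bar F^*$ restricted to $\pi a_1\cup\pi a_2$, the projected Cantor set $\bar A_0=\pi(A_0)$ is the attractor of an iterated function system of two uniform $C^{1+\epsilon}$ contractions and therefore has positive lower box dimension. Lifting through the $C^{1+\epsilon}$ stable holonomy of Proposition~\ref{prop-localprod}, which is locally bi-Lipschitz and hence preserves lower box dimension, the unstable slice $A_0\cap W^u(z)$ also has positive lower box dimension. Applying the $C^1$ diffeomorphism $g_{z'}$ then yields
\[
g_{z'}(A_0\cap W^u(z))\subset D(A_0\times\{z'\})\subset D(A_0\times A_0),
\]
and since $C^1$ diffeomorphisms preserve lower box dimension, the result follows.

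The main obstacle will be the second step: arranging a single inducing scheme that simultaneously incorporates both periodic orbits $\mathcal{O}(z)$ and $\mathcal{O}(z')$ into a two-symbol Markov subshift, while ensuring that the resulting unstable Cantor slice at $z$ fits inside the a priori small open set $V_z$ on which $g_{z'}$ is a diffeomorphism. This requires a careful combination of the inducing-scheme freedom described after Proposition~\ref{pr:Wu_crosses} with the quantitative continuity of $D_0$ from Lemma~\ref{lem-D0}, so that shrinking $Y^*$ to control the size of $A_0\cap W^u(z)$ does not destroy the nonvanishing of $dg_{z'}$ on the relevant region.
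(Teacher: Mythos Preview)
Your route is substantially more elaborate than the paper's. The paper's proof is essentially three lines: (i) since $\bar F^*|_{\bar a_1\cup\bar a_2}$ is a two-branch uniformly expanding map with bounded derivative, the Cantor set $\bar A_0$ has positive Hausdorff dimension (citing Takens); (ii) because the new inducing domain was chosen with $Y^*$ inside the stable saturation of $V$, this Cantor set sits inside $V$ after identifying via the $C^{1+\epsilon}$ stable holonomy, so the $C^1$ diffeomorphism $g|_V$ of Corollary~\ref{cor-g} sends it to a set of positive Hausdorff dimension; (iii) this image is declared to lie in $D(A_0\times A_0)$, and positive Hausdorff dimension gives positive lower box dimension. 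That is the entire argument.

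What you add is an approximation of $p,p'$ by \emph{actual} periodic points $z,z'$ (Proposition~\ref{prop-nonzero} only places $y,y'$ on unstable manifolds of periodic points) together with a construction of the two-symbol subshift $A_0$ that contains both periodic orbits. Your motivation is sound: you want $z'\in A_0$ so that the inclusion $g_{z'}(A_0\cap W^u(z))\subset D(A_0\times\{z'\})\subset D(A_0\times A_0)$ is honest. The paper does not do this; it keeps the fixed reference point $p'$ from Proposition~\ref{prop-nonzero} and simply applies $g=g_{p'}$, without arranging $p'\in A_0$.

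The obstacle you isolate in your final paragraph is therefore a difficulty you have created for yourself by taking the more careful route. It is genuine: the two base points can project far apart in $\bar X$ (recall that the double scheme of Section~\ref{sec-double} was introduced precisely because the relevant points $0$ and $y_1=f^2(0+)$ are separated), and building a single small full-branch inducing domain whose two-branch subshift captures both periodic orbits, while keeping the unstable Cantor slice inside $V_z$, is not straightforward from the ingredients you list. If you want to follow the paper, drop this step entirely and argue as in (i)--(iii) above; if you want to make the final inclusion in (iii) watertight, you will need a sharper device than embedding both full periodic orbits in the horseshoe.
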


\begin{proof}
  At the level of the quotient dynamics, the map
  $\bar F^*:\bar Y^*\to\bar Y^*$ is uniformly expanding.  
Moreover, $\bar F^*a_i=\bar Y^*$ for $i=1,2$ and the derivative of $\bar F^*$ is bounded on the closure of $a_1\cup a_2$.
It follows 
  (see for example~\cite[p.~203]{Takens}) that the Cantor set
  $A_0$ has positive Hausdorff dimension.  Since $g|_V$ is a
  $C^1$ diffeomorphism and $A_0\subset V$, it follows that
  $g(A_0)$ has positive Hausdorff dimension.  Hence the
  larger set $D(A_0\times A_0)$ has positive lower box dimension.
\end{proof}

\section{Fast mixing decay of correlations}
\label{sec:fast-mixing-decay}

We are now ready to complete the proof of
Theorem~\ref{thm:rapid}.  According to~\cite{Melb07,Melb09}, the
result is immediate from Proposition~\ref{prop-dim}.  
Unfortunately, the precise formulation of the result we require is not written down there.   If the roof function were bounded then we would have all the ingredients required to directly apply~\cite[Corollary~5.6]{Melb09}.
The case of unbounded roof functions is considered in~\cite[Proposition~3.6]{Melb07} and~\cite[Section~6.5]{Melb09} for semiflows and flows respectively, but
omitting the crucial ingredient provided by the dimension of the range of the temporal distortion function.

Hence, to apply the results in~\cite{Melb07,Melb09} it is necessary to recall several of the definitions and intermediate steps.  This is done for semiflows in
Subsection~\ref{sec:semiflow}.  In Section~\ref{sec:flow}, we pass from the semiflow to the flow; here it turns out to be particularly convenient to use a recent approach of~\cite{AvGoYoc},

\subsection{Fast mixing for the semiflow}
\label{sec:semiflow}

We assume that $\bar F:\bar Y\to\bar Y$ is a uniformly expanding map with partition $\alpha_0$ covered by a uniformly hyperbolic map $F:Y\to Y$ with partition $\alpha$ as in Section~\ref{sec:lorenz-attractors-as}.
We continue to suppose 
that $R:Y\to\R^+$ is a possibly unbounded roof
function, constant along stable leaves, 
that is locally Lipschitz in the symbolic metric $d_\theta$ on  $\bar Y$.
Moreover, $R$ is bounded below and has exponential tails.  

Given $v:\bar Y^R\to\R$ continuous, we define
$|v|_\theta=\sup |v(y,u)-v(y',u)|/d_\theta(y,y')$ where
the supremum is over distinct points $(y,u),\,(y',u)\in \bar Y^R$.
(Recall that $\bar Y^R$ is an identification space so observables $v:\bar Y^R\to\R$ satisfy $v(y,R(y))=v(\bar Fy,0)$.)
Define $F_{\theta}(\bar Y^R)$ to be the space
of continuous observables $v:\bar Y^R\to\R$ 
% such that $v$ vanishes in a neighborhood of the identified points $(y,R(y))\sim (\bar F y,0)$ and 
such that $\|v\|_\theta=|v|_\infty+|v|_\theta<\infty$.

Let $\partial_tv=\frac{d}{dt}\bar S_tv|_{t=0}$ denote the derivative of $v$ in the flow direction.  So $\partial_tv(y,u)=\frac{\partial}{\partial u}v(y,u)$ when $0<u<R(y)$, 
$\partial_t v(y,0)=\lim_{t\to0+}(v(y,t)-v(y,0))/t$ and
$\partial_t v(y,R(y))=\lim_{t\to0+}(v(y,R(y))-v(y,R(y)-t))/t$.
Provided that
$\partial_t v(y,R(y))= \partial_t v(\bar Fy,0)$ for all $y\in\bar Y$, 
this defines a function $\partial_tv:\bar Y^R\to\R$.
If in addition $v,\partial_tv\in F_{\theta}(\bar Y^R)$ then we write $v\in F_{\theta,1}(\bar Y^R)$.

Similarly, define the space $F_{\theta,k}(\bar Y^R)$ of observables
$v:\bar Y^R\to\R$ that are $C^k$ in the semiflow direction with derivatives
$\partial_t^jv\in F_\theta(\bar Y^R)$ for $j=0,\dots,k$.
Define $\|v\|_{\theta,k}=\sum_{j=0}^k \|\partial_t^jv\|_\theta$.

We require some further definitions from~\cite{Melb07,Melb09} based on~\cite{dolgopyat98}.
A subset $\bar A_0\subset\bar Y$ is a {\em finite subsystem} of $\bar Y$ if
$\bar A_0=\bigcap_{n\ge1}\bar F^{-n}\bar A$ where
$\bar A$ is a finite union of elements of $\alpha_0$.
Similarly, a subset $A_0\subset Y$ is a {\em finite subsystem} of $Y$ if
$A_0=\bigcap_{n\ge1}F^{-n}A$ where $A$ is a finite union of elements of $\alpha_0$.
Such a finite subsystem projects to a finite subsystem $\bar A_0\subset\bar Y$.

\begin{definition}  \label{def-approx}
For $b\in\R$ define $M_b:L^\infty(\bar Y)\to L^\infty(\bar Y)$,
$M_b v=e^{-ibR}v\circ\bar F$.
	We say that $M_b$ has {\em approximate eigenfunctions} on a subset $\bar A_0\subset \bar Y$ if there exist constants $\alpha,\beta>0$ arbitrarily large and $C\ge1$, and sequences $b_k\in\R$ with $|b_k|\to\infty$, $\varphi_k\in[0,2\pi)$, $u_k:\bar Y\to\C$ with $|u_k|\equiv1$ and $|u_k|_\theta=\sup_{y\neq y'}|u_k(y)-u_k(y')|/d_\theta(y,y')\le C|b_k|$, such that setting
	$n_k=[\beta \ln|b_k|]$,
	\[
	|(M_{b_k}^{n_k}u_k)(y)-e^{i\varphi_k}u_k(y)|\le C|b_k|^{-\alpha},
	\]
	for all $y\in\bar A_0$ and all $k\ge1$.
	\end{definition}

% Recall that we defined the space $F_{\theta,k}(\bar Y^R)$ in Section~\ref{sec-suspension}.

	\begin{theorem} \label{thm-M07} Let $\bar S_t:\bar Y^R\to\bar Y^R$
	  be a suspension semiflow 
	over a uniformly expanding map $\bar F:\bar Y\to\bar Y$,
	where the roof function $R:\bar Y\to\R^+$ has exponential tails.

	Suppose that there exists a finite subsystem $\bar A_0\subset \bar Y$ such that  there are no approximate eigenfunctions on $\bar A_0$.  Then
	the semiflow has superpolynomial decay for sufficiently
	smooth observables. That is, 
	for any $\gamma>0$, there exists $C>0$ and $k\ge1$ such that
	for all 
	observables $v\in F_{\theta,k}(\bar Y^R)$, $w\in L^\infty(\bar Y^R)$ and all $t>0$,
	\begin{align} \label{eq-mixing}
	\Big|
	\int v \; w\circ \bar S_t \, d\bar\mu -
	\int v\,d\bar\mu  \int w\, d\bar\mu\Big|
	\le 
	C\|v\|_{\theta,k} |w|_{\infty}t^{-\gamma}.
	\end{align}
	\end{theorem}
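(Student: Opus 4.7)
The plan is to apply Dolgopyat's transfer operator method, extending the bounded-roof result~\cite[Corollary~5.6]{Melb09} to our setting with exponentially-tailed $R$. First I would introduce the twisted transfer operators $L_b:F_\theta(\bar Y)\to F_\theta(\bar Y)$, $b\in\R$, obtained from the Perron-Frobenius operator of $\bar F$ by inserting the weight $e^{-ibR}$, and express the correlation integral $\int v\,w\circ\bar S_t\,d\bar\mu-\int v\,d\bar\mu\int w\,d\bar\mu$ as an inverse Laplace/Fourier representation involving $(I-L_{ib})^{-1}$, along the lines of~\cite[Section~6]{Melb09}. The exponential tails of $R$ ensure that $s\mapsto L_s$ extends analytically to a strip $|\Re s|<\sigma_0$, that the peripheral spectrum of $L_0$ on $F_\theta(\bar Y)$ consists only of a simple eigenvalue at $1$ (by mixing of $\bar S_t$), and that the relevant contour manipulations are legitimate.

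The core step is the \emph{Dolgopyat estimate}: for every $\alpha>0$ there exist $C,\beta>0$ such that
\[
\|L_b^n\|_\theta\le C|b|^{-\alpha}\quad\text{for }n=\lceil\beta\log|b|\rceil\text{ and }|b|\text{ large}.
\]
I would prove this by contradiction. If it failed along a sequence $b_k$, then a compactness/cone argument extracts unit functions $u_k\in F_\theta(\bar Y)$ with $|u_k|_\theta\lesssim|b_k|$ such that $L_{b_k}^{n_k}u_k\approx e^{i\varphi_k}u_k$ in sup norm on the finite subsystem $\bar A_0$; this is exactly the approximate eigenfunction pattern of Definition~\ref{def-approx}, contradicting the hypothesis. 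The cone contraction mechanics follow~\cite{dolgopyat98,Melb07,Melb09}: for $u$ not approximately $b$-oscillating in a prescribed way, the oscillation of $e^{-ibR}$ at scale $|b|^{-1}$ combined with expansion of $\bar F$ produces cancellation in $L_b^n u$, yielding the required sup-norm contraction.

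Once the Dolgopyat estimate is established, the resolvent $(I-L_{ib})^{-1}$ grows at most polynomially in $|b|$ on the imaginary axis, with only a simple pole at $b=0$ contributing the product-of-averages. To conclude~\eqref{eq-mixing} I would shift the contour slightly into $\Re s=-\epsilon<0$ (permitted by analyticity) and integrate by parts $k$ times in $t$, each such integration replacing $v$ by $\partial_tv$ while gaining a factor $|b|^{-1}$ in the frequency variable; balancing $k$ against $\gamma$ and the polynomial growth exponent of the resolvent yields $t^{-\gamma}$ decay with constant of the form $C\|v\|_{\theta,k}|w|_\infty$. The main obstacle is adapting the cone contraction argument of~\cite{dolgopyat98,Melb09} from the bounded-roof setting to our unbounded one: one needs either to truncate $R$ at height of order $\log|b|$ and absorb the tail using the exponential decay from Proposition~\ref{prop-Rdecay}, or to work on a Banach space weighted by the tail; in either approach the oscillation analysis on $\bar A_0$ is essentially identical to the bounded case, which is what makes invoking~\cite{Melb07,Melb09} feasible.
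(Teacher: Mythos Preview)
Your outline is a recognizable high-level sketch of the Dolgopyat method, and in spirit it is the machinery that underlies the cited references. But the paper's proof takes a much shorter route: the unbounded-roof (exponential-tail) case is \emph{already} established in~\cite[Lemma~3.5 and Proposition~3.6]{Melb07}, so the paper simply verifies that the hypotheses of those results are met and then addresses two specific discrepancies. First, Definition~\ref{def-approx} imposes the extra constraint $|u_k|_\theta\le C|b_k|$ not stated in~\cite{Melb07}; the paper observes that the approximate eigenfunctions actually produced in~\cite[Lemma~3.12]{Melb07} automatically satisfy this bound. Second, \cite[Proposition~3.6]{Melb07} assumes $\bar S_t$ is mixing; the paper removes this hypothesis by showing that if $\bar S_t$ were not mixing then a measurable solution $u$ of $u\circ\bar F=e^{icR}u$ exists, which by~\cite[Theorem~1.1]{Gouezel06} has a Lipschitz version, and then the powers $u_k=u^k$ furnish approximate eigenfunctions on all of $\bar Y$ (hence on $\bar A_0$), contradicting the hypothesis. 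Thus the ``main obstacle'' you identify --- adapting the cone-contraction argument to unbounded roofs --- is precisely what~\cite{Melb07} already does, and there is no need to redo it.

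There is also a gap in your sketch: you invoke ``mixing of $\bar S_t$'' to control the peripheral spectrum, but mixing of the semiflow is not among the hypotheses of the theorem. As just explained, the paper handles this explicitly by showing that non-mixing forces approximate eigenfunctions, so the hypothesis of the theorem already implies mixing; your argument would need this step as well. (Also, the spectral gap for $L_0$ comes from $\bar F$ being uniformly expanding, not from properties of $R$ or the semiflow; it is the absence of peripheral spectrum for $L_{ib}$, $b\neq0$, that is tied to mixing of $\bar S_t$.)
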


	\begin{proof}
For the quotient suspension $\bar Y^R$, we are in the situation of~\cite[Section 3]{Melb07}.
(The induced roof function $R$ is denoted by $H$ in~\cite{Melb07}.)
The exponential tail condition in~\cite[Definition~3.1]{Melb07} follows from
Proposition~\ref{prop-Rdecay} and Lemma~\ref{le:Lip-symbolic}.
	Hence, in principle, the result follows from~\cite[Lemma~3.5, Proposition~3.6]{Melb07}.   There are two caveats that need to be mentioned.

The first caveat is that 
the definition of approximate eigenfunctions in~\cite{Melb07}
is slightly weaker than in Definition~\ref{def-approx} since the constraint $|u_k|_\theta\le C|b_k|$ is not mentioned.  
However, as is easily checked, the argument in~\cite{Melb07} shows that
the failure of fast mixing implies the existence of approximate eigenfunctions that actually satisfy the stronger requirements of Definition~\ref{def-approx}. 
Only~\cite[Lemma~3.5]{Melb07} is possibly affected, and it is a consequence
of~\cite[Corollary~3.11 and Lemmas~3.12 and~3.13]{Melb07}. Of these, only~\cite[Lemma~3.12]{Melb07} is possibly affected by the change in definition.  Moreover, this lemma gives a criterion for the existence of approximate eigenfunctions (called $w$ and eventually $w_1$) and these are shown to satisfy the extra constraint $|w_1|_\theta\le C_{11}|b|$.

The second caveat is that~\cite[Proposition~3.6]{Melb07} has an additional hypothesis, namely that $\bar S_t$ is mixing.  We claim that if
$\bar S_t$ is not mixing, then there exist approximate eigenfunctions on the whole of $\bar Y$; hence this extra hypothesis is redundant.  It remains to verify the claim.  A standard characterisation of mixing for suspension
semiflows over a mixing transformation $\bar F$ is that for each $c\neq0$ the functional equation $u\circ \bar F=e^{icR}u$ has no measurable solutions
$u:\bar Y\to S^1$ where $S^1\subset\C$ is the unit circle.
Suppose that $c\neq0$ and $u:\bar Y\to S^1$ measurable satisfy such a
functional equation.  Since $|e^{icR(y)}-e^{icR(y')}|\le |c||R(y)-R(y')|$,
the exponential tail condition on $R$ certainly implies that the hypothesis
on $f=e^{icR}$ in~\cite[Theorem~1.1]{Gouezel06} is satisfied.
Hence $u$ has a version with $|u|_\theta<\infty$.
For any $\alpha,\beta>0$, we let $u_k=u^k$,
$b_k=kc$, $n_k=[\beta\ln|b_k|]$, $\varphi_k=0$.
In particular, $|u_k|_\theta\le k|u|_\theta\le C|b_k|$ with $C=|u|_\theta/|c|$.
Moreover, $M_{b_k}^{n_k}u_k\equiv e^{i\varphi_k}u_k$ so the requirements
in Definition~\ref{def-approx} are satisfied.
This completes the proof.
	\end{proof}

\begin{proposition}
\label{prop-range}
Let $A_0\subset Y$ be a finite subsystem and suppose that $D(A_0\times A_0)$
has positive lower box dimension.
Then there are no approximate eigenfunctions on $\bar A_0$.
\end{proposition}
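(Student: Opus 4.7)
The plan is to argue by contradiction: assuming approximate eigenfunctions $u_k, b_k, \varphi_k, n_k$ on $\bar A_0$ exist (in the sense of Definition~\ref{def-approx}, with $\alpha$ and $\beta$ chosen as large as we wish), I extract a quantitative constraint on the values $b_k D(p,q) \bmod 2\pi$ that is incompatible with positive lower box dimension of $D(A_0 \times A_0)$. As a preliminary step, iterate the defining relation: since $M_{b_k}$ is an $L^\infty$-isometry and $\bar A_0$ is $\bar F$-invariant, for every $m \ge 1$ and every $y \in \bar A_0$ one has
\begin{align*}
\bigl| e^{-ib_k R_{mn_k}(y)} u_k(\bar F^{mn_k} y) - e^{im\varphi_k} u_k(y) \bigr| \le C m |b_k|^{-\alpha},
\end{align*}
where $R_n = \sum_{j=0}^{n-1} R \circ \bar F^j$.

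The heart of the argument is the extraction of a temporal distortion estimate from this iterated relation. Fix $(p,q) \in A_0 \times A_0$ with $[p,q]$ defined. Because $[q,p]$ lies on the unstable manifold of $q$ and $[p,q]$ on the unstable manifold of $p$, the backward orbits of $q$ and $[q,p]$ (and of $p$ and $[p,q]$) share the same symbolic past in the finite subsystem $A_0$, so only two distinct length-$N$ inverse branches $h_p^{(N)}, h_q^{(N)}$ of $\bar F^N$ on $\bar A_0$ arise. The key identifications are $h_p^{(N)}(\bar p) = \pi F^{-N} p$, $h_p^{(N)}(\bar q) = \pi F^{-N}[p,q]$, and their analogues for $h_q^{(N)}$, so that the alternating Birkhoff combination
\begin{align*}
X_N = [R_N(h_p^{(N)}(\bar p)) - R_N(h_p^{(N)}(\bar q))] + [R_N(h_q^{(N)}(\bar q)) - R_N(h_q^{(N)}(\bar p))]
\end{align*}
equals $\sum_{k=1}^N \bigl\{[R(F^{-k}p) - R(F^{-k}[p,q])] + [R(F^{-k}q) - R(F^{-k}[q,p])]\bigr\}$ and converges to $D_0(p,[p,q]) + D_0(q,[q,p]) = D(p,q)$ at an exponential rate in $N$. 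Applying the iterated relation at each of the four points and forming the alternating product, the $e^{im\varphi_k}$ factors and the common value $u_k(\bar F^{mn_k - N} y)$ cancel, while the residual four-point $u_k$-ratio only compares values at points that share the same inverse branch (hence the same first $N$ symbols and $d_\theta$-distance $\le \theta^N$); the Lipschitz bound $|u_k|_\theta \le C|b_k|$ then controls this ratio as $1 + O(|b_k|\theta^N)$. Choosing $N \asymp \log|b_k|$ sufficiently large compared to $\alpha$, and $\beta$ in $n_k = [\beta \log|b_k|]$ large enough that $m=1$ already satisfies $mn_k \ge N$, one arrives at
\begin{align*}
\bigl| e^{-ib_k D(p,q)} - 1 \bigr| \le C|b_k|^{-\alpha}
\end{align*}
for every such pair, and hence, by density and continuity of $D$, on a subset of $A_0\times A_0$ whose image is dense in $D(A_0\times A_0)$.

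The contradiction with the dimension hypothesis then follows by a covering argument. Boundedness of $D(A_0\times A_0)$ combined with the previous estimate forces $D(A_0\times A_0)$ to be covered by $O(|b_k|)$ intervals of length $O(|b_k|^{-\alpha-1})$. On the other hand, positive lower box dimension $d>0$ implies $N(\epsilon) \ge c\epsilon^{-d'}$ for any fixed $d'\in(0,d)$ and all small $\epsilon$; applying this with $\epsilon \asymp |b_k|^{-\alpha-1}$ yields $|b_k|^{d'(\alpha+1)}\lesssim |b_k|$, which fails as $k\to\infty$ once $\alpha$ is chosen so that $d'(\alpha+1)>1$; such $\alpha$ are admissible since Definition~\ref{def-approx} allows $\alpha$ arbitrarily large.

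The main obstacle is the middle step, and specifically the four-point cancellation of the $u_k$-factors: the pasts of $p$ and $q$ typically differ already at position $1$, so that $h_p^{(N)}(\bar p)$ and $h_q^{(N)}(\bar p)$ are at symbolic distance $1$ and cannot be compared via $|u_k|_\theta$ directly. The alternating structure of the ratio is arranged precisely so that $u_k$-values are only compared between points sharing the same inverse branch, making the large Lipschitz constant $C|b_k|$ multiply the tiny distance $\theta^N$ rather than an $O(1)$ distance. This is the Dolgopyat--Melbourne mechanism adapted to the four-point structure encoded by the local product, and the delicate part of the verification is tuning $N$ against $\alpha,\beta$ so that the $u_k$-ratio error, the tail of the $D_0$ series, and the iteration error $m|b_k|^{-\alpha}$ are all simultaneously bounded by an inverse power of $|b_k|$ that can be made arbitrarily large.
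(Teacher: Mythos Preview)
Your argument is correct and is exactly the mechanism from~\cite{Melb09} (Theorem~5.5 and Corollary~5.6) that the paper invokes by citation; you have simply unpacked what the paper leaves as a reference. One point to tighten: applying the approximate-eigenfunction relation at the four preimage points requires them to lie in $\bar A_0$, and your identification $h_p^{(N)}(\bar p)=\pi F^{-N}p$ presupposes that the backward $F$-orbit of $p$ remains in the finite alphabet $A$, which is not guaranteed by membership in the forward-defined set $A_0=\bigcap_{n\ge0}F^{-n}A$. The standard remedy is to let the two length-$N$ inverse branches range freely over words in the finite alphabet (so all four points automatically lie in $\bar A_0$); the limit of your $X_N$ is then $D$ evaluated at a pair in the bi-infinite subsystem, and the estimate passes to all of $D(A_0\times A_0)$ by the density/continuity step you already sketched.
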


\begin{proof}
Suppose 
that there are approximate eigenfunctions on $\bar A_0$.
The calculation in the proof of~\cite[Theorem~5.5]{Melb09} (with $\omega_k=0$)
shows that for all $\alpha>0$, there is a sequence $b_k\in\R$ with $|b_k|\to\infty$ and a constant $C>0$ such that $|e^{ib_kD(y_1,y_4)}-1|\le C|b_k|^{-\alpha}$
for all $y_1,y_4\in A_0$.
It then follows from~\cite[Corollary~5.6]{Melb09} that 
$D(A_0\times A_0)$ has lower box dimension zero.
\end{proof}

\begin{corollary}  \label{cor:rapid}
Let $G\in\mathcal{U}$ be a vector field defining a strongly dissipative
geometric Lorenz flow.
Let $\bar S_t:\bar Y^R\to\bar Y^R$ denote the corresponding suspension semiflow.
Then for all $\gamma>0$, there exists $C>0$ and $k\ge1$ such that 
for all observables $v\in F_{\theta,k}(\bar Y^R)$, $w\in L^\infty(\bar Y^R)$, and all $t>0$,
\[
\Big|
\int v \; w\circ \bar S_t \, d\bar\mu -
\int v\,d\bar\mu  \int w\, d\bar\mu\Big|
\le 
	C\|v\|_{\theta,k} |w|_{\infty}t^{-\gamma}.
\]
	\end{corollary}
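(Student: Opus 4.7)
The plan is to deduce the corollary by assembling the ingredients already prepared in the preceding sections and invoking Theorem~\ref{thm-M07}. First I would verify the structural hypotheses of that theorem: the quotient map $\bar F:\bar Y\to\bar Y$ is uniformly expanding with a Markov partition $\alpha_0$ by the construction in Section~\ref{sec:lorenz-attractors-as}; the induced roof function $R:\bar Y\to\R^+$ has exponential tails by Proposition~\ref{prop-Rdecay} and lies in $F_\theta(\bar Y)$ for a suitable $\theta\in(0,1)$ by Lemma~\ref{le:Lip-symbolic}. The only remaining hypothesis is then the absence of approximate eigenfunctions on some finite subsystem.

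For this last hypothesis I would invoke Proposition~\ref{prop-dim}, which (after the adjustment of the inducing scheme carried out in Section~\ref{sec-Hdim}) produces a finite subsystem $A_0$ whose image $D(A_0\times A_0)$ under the temporal distortion function has positive lower box dimension. Proposition~\ref{prop-range} then rules out approximate eigenfunctions on the projected subsystem $\bar A_0\subset\bar Y$. Theorem~\ref{thm-M07} immediately yields the required decay bound~\eqref{eq-mixing} with exponent $\gamma$ and constants $C,k$ depending only on $\gamma$ and on the data of the semiflow.

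The one subtlety is that Proposition~\ref{prop-dim} is stated for the modified inducing scheme $F^*:Y^*\to Y^*$, whereas the corollary refers to the suspension semiflow built from the original scheme. Since both semiflows are measure-theoretically isomorphic representations of the geometric Lorenz flow on $\Lambda$, and since the temporal distortion function $D$ is defined intrinsically in terms of $r$ and $f$ and does not depend on the choice of inducing scheme, I would simply apply Theorem~\ref{thm-M07} to the semiflow built over $F^*$ and transfer the conclusion to the semiflow over $F$ through the identification of the two models.

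The main obstacle, as I see it, is bookkeeping rather than analysis: one has to make sure that the refined notion of approximate eigenfunction used in Theorem~\ref{thm-M07} (in particular the Lipschitz bound $|u_k|_\theta\le C|b_k|$) is exactly the notion ruled out by Proposition~\ref{prop-range}, and that the constants in Theorem~\ref{thm-M07} depend only on quantities already under control (the expansion rate of $\bar F$, the Lipschitz norm $|R|_\theta$, and the tail decay rate of $R$). Once this alignment is checked, the proof of the corollary reduces to a one-line chain of implications combining Proposition~\ref{prop-dim}, Proposition~\ref{prop-range} and Theorem~\ref{thm-M07}.
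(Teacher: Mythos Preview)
Your proposal is correct and follows exactly the same route as the paper: Proposition~\ref{prop-dim} supplies a finite subsystem $A_0$ with $D(A_0\times A_0)$ of positive lower box dimension, Proposition~\ref{prop-range} rules out approximate eigenfunctions on $\bar A_0$, and Theorem~\ref{thm-M07} then yields the decay estimate. Your additional remarks about the inducing-scheme subtlety and the alignment of Definition~\ref{def-approx} with Proposition~\ref{prop-range} are more explicit than the paper's own treatment, but the argument is identical.
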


\begin{proof}
	In Proposition~\ref{prop-dim}, we constructed a finite subsystem $A_0\subset Y$
such that $D(A_0\times A_0)$ has positive lower box dimension.
Hence there are no approximate eigenfunctions on $\bar A_0$ by 
Proposition~\ref{prop-range}.
The result follows from Theorem~\ref{thm-M07}.
\end{proof}

\begin{remark} \label{rmk-simplify} As mentioned in
  Remark~\ref{rmk-Rdecay}, the full strength of
  Proposition~\ref{prop-Rdecay} is not required in this
  paper.  A standard and elementary argument using the
  exponential tails for $r$ and $\tau$ implies the stretched
  exponential estimate $\mu_Y(R>n)=O(e^{-ct^{1/2}})$ which
  suffices for the methods in~\cite{Melb09}.  However, the
  analogue of Theorem~\ref{thm-M07} is not stated explicitly
  in~\cite{Melb09} so for ease of exposition we have made
  use of Proposition~\ref{prop-Rdecay}.
\end{remark}

\subsection{Fast mixing for the flow}
\label{sec:flow}

We continue to assume the structure from Subsection~\ref{sec:semiflow} and in addition that 
there is a $C^{1+\epsilon}$ global exponentially contracting stable foliation.  

In Subsection~\ref{sec:semiflow},
we defined the space $F_{\theta}(\bar Y^R)$ of observables on $\bar Y$.
To define the corresponding space
$F_{\theta}(Y^R)$ it is convenient to choose coordinates
$(y,z)$ on $Y$ where $y\in\bar Y$ and vertical lines correspond to stable leaves (recall that this can be achieved by a $C^{1+\epsilon}$ change of coordinates).
Then we define $F_\theta(Y^R)$  to be the space of continuous observables 
$v:Y^R\to\R$ respecting the identifications
$(y,z,R(y))\sim (F(y,z),0)$ and such that
$\|v\|_\theta=|v|_\infty+|v|_\theta<\infty$ where
\[
|v|_\theta=\sup_{(y,z,u)\neq(y',z',u)}\frac{|v(y,z,u)-v(y',z',u)|}{d_\theta(y,y')+|z-z'|}.
\]
Again, we define the space $F_{\theta,k}(Y^R)$ of observables
$v:Y^R\to\R$ that are $C^k$ in the flow direction with derivatives
$\partial_t^jv\in F_\theta(Y^R)$ for $j=0,\dots,k$.
Define $\|v\|_{\theta,k}=\sum_{j=0}^k \|\partial_t^jv\|_\theta$.

% \begin{remark}  We have adopted different conventions in our definitions of
% $F_{\theta,k}(\bar Y^R)$ and $F_{\theta,k}(Y^R)$ with regard to behavior near the identified points.  To prove Theorem~\ref{thm:rapid}, the convention used in the definition of $F_{\theta,k}(Y^R)$ suffices.  The extra generality in the definition
% of $F_{\theta,k}(\bar Y^R)$ will be required later in the proof of Theorems~\ref{thm:CLT} and~\ref{thm:ASIP}.
% \end{remark}

\begin{lemma} \label{lem-AGY}
There is a continuous family of probability measures $\{\eta_y,\,y\in\bar Y\}$ 
on $Y$ with $\supp \eta_y\subset \pi^{-1}(y)$ such that
$\int_Y v\,d\mu_Y=\int_{\bar Y} \int_{\pi^{-1}(y)} v\,d\eta_y\,d\mu_{\bar Y}(y)$
for all $v:Y\to\R$ continuous.

Moreover, there is a constant $C_1>0$ such that
if $v\in F_{\theta,k}(Y^R)$, and $\bar v:\bar Y^R\to\R$ is defined to be
$\bar v(y,u)=\int_{\pi^{-1}(y)} v(y',u)\,d\eta_y(y')$, then 
$\bar v\in F_{\theta,k}(\bar Y^R)$
and 
$\|\bar v\|_{\theta,k}\le C_1\|v\|_{\theta,k}$.
\end{lemma}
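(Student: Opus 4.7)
The plan is to construct the family $\{\eta_y\}$ as the disintegration of $\mu_Y$ along the stable foliation, and then verify the norm estimate on $\bar v$ directly from the resulting density representation. Since the stable foliation on $Y$ is $C^{1+\epsilon}$, I work in coordinates $(y,z)$ on $Y$ in which the stable leaves are vertical lines $\pi^{-1}(y) = \{y\} \times [-1,1]$. By absolute continuity of $\mu_Y$ with respect to Lebesgue (Proposition~4.5 in~\cite{ArVar}), we may write $d\mu_Y = h(y,z)\,dy\,dz$ for a bounded density $h$. Setting $g(y) = \int_{-1}^1 h(y,z)\,dz$, so that $d\mu_{\bar Y} = g(y)\,dy$, I define
\[
d\eta_y(z) = g(y)^{-1} h(y,z)\,dz \qquad\text{on } \pi^{-1}(y).
\]
The factorization formula is then Fubini's theorem. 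Compatibility with the identification $(y,R(y))\sim(Fy,0)$ on $Y^R$, which is needed for $\bar v$ to descend to $\bar Y^R$, reduces to $F_*\eta_y = \eta_{\bar Fy}$. This follows from $F$-invariance of $\mu_Y$ together with the fact that $F$ carries $\pi^{-1}(y)$ into $\pi^{-1}(\bar Fy)$ and that $R$ is constant along stable leaves.

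For the norm estimate, the sup-bound $|\bar v|_\infty \le |v|_\infty$ is immediate from $\eta_y$ being a probability measure, and integration against $\eta_y$ commutes with differentiation in the flow direction, so $\partial_t^j \bar v(y,u) = \int \partial_t^j v(y',u)\,d\eta_y(y')$. The full estimate $\|\bar v\|_{\theta,k} \le C_1 \|v\|_{\theta,k}$ therefore reduces to controlling the $d_\theta$-Lipschitz constant of $\bar v(\cdot,u)$ for fixed $u$. For this I would write
\[
\bar v(y,u) - \bar v(y',u) = \int_{-1}^1 [v(y,z,u) - v(y',z,u)]\,d\eta_y(z) + \int_{-1}^1 v(y',z,u)\,(d\eta_y - d\eta_{y'})(z),
\]
bounding the first integral by $|v|_\theta\, d_\theta(y,y')$ directly from the definition of $|v|_\theta$, and the second by $|v|_\infty \|\eta_y - \eta_{y'}\|_{\mathrm{TV}}$.

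The main obstacle is therefore the total-variation estimate $\|\eta_y - \eta_{y'}\|_{\mathrm{TV}} \le C d_\theta(y,y')$, which via the formula for the density $\rho(y,z) = g(y)^{-1} h(y,z)$ reduces to $L^1$-H\"older control of $y \mapsto \rho(y,\cdot)$. The regularity of $g$ is standard since $\mu_{\bar Y}$ is the ACIP of the piecewise expanding map $\bar F$ and has bounded-variation density. The regularity of $h$ in the transverse $y$ direction is the delicate point, and this is precisely where the $C^{1+\epsilon}$ smoothness of the stable foliation is used: it lets one compare densities on neighbouring stable leaves via the H\"older stable holonomy, yielding the required $L^1$-H\"older estimate after conversion from Euclidean distance into $d_\theta$ using Lemma~\ref{le:Lip-symbolic}. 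This is the content of the construction we are invoking from~\cite{AvGoYoc}; in our particular setting it can equivalently be extracted from the transfer operator analysis for $F$ already in place.
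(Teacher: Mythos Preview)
Your construction has a genuine gap at the very first step: the measure $\mu_Y$ is \emph{not} absolutely continuous with respect to two-dimensional Lebesgue on $Y$, so the density $h(y,z)$ does not exist. The SRB measure $\mu$ is supported on the attractor $\Lambda$, which has zero volume; its trace $\mu_Y$ on the cross-section is supported on $\Lambda\cap Y$, which locally is the product of an unstable arc with a Cantor set in the stable direction and hence has zero area. The conditional measures $\eta_y$ on stable leaves are therefore singular (supported on that Cantor set), and no formula $d\eta_y(z)=g(y)^{-1}h(y,z)\,dz$ is available. The citation you give (\cite[Proposition~4.5]{ArVar}) concerns the one-dimensional quotient measure $\mu_{\bar Y}$, which \emph{is} absolutely continuous with bounded density; the passage in the proof of Proposition~\ref{prop-Rdecay} uses only this, since $R$ is constant on stable leaves.

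Consequently your total-variation estimate $\|\eta_y-\eta_{y'}\|_{\mathrm{TV}}\le C\,d_\theta(y,y')$ cannot be obtained via density regularity, and is in any case stronger than required. Because observables $v\in F_\theta(Y^R)$ are Lipschitz in the stable coordinate $z$, what is actually needed is a dual-Lipschitz (Wasserstein-type) bound on $\int v(y',z,u)\,d(\eta_y-\eta_{y'})(z)$. This is exactly what the paper invokes from~\cite[Proposition~10]{BM15}, which works directly with the abstract disintegration and uses the $C^{1+\epsilon}$ stable foliation to control the stable holonomy between nearby leaves --- but without ever assuming the conditionals have densities. The remaining ingredients of your outline (commutation of the averaging with $\partial_t$, and the compatibility $F_*\eta_y=\eta_{\bar Fy}$ deduced from $F$-invariance of $\mu_Y$ and uniqueness of the disintegration) match the paper's proof.
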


\begin{proof}
	A proof of the existence of the continuous disintegration $\mu_Y=\int_{\bar Y}\eta_y\,d\bar \mu_Y(y)$ in the first statement of the lemma can be found for instance in~\cite{BM15}.   

Suppose that $v:Y^R\to\R$ is continuous.  In particular $v(y,R(y))=v(Fy,0)$ for all $y\in Y$.   Define 
$\bar v(y,u)=\int_{\pi^{-1}(y)} v(y',u)\,d\eta_y(y')$.  As shown below,
$\bar v(y,R(y))=\bar v(\bar Fy,0)$ for all $y\in\bar Y$, so that we have a well-defined function
$\bar v:\bar Y^R\to\R$.   The estimate $\|\bar v\|_\theta\le C_1\|v\|_\theta$
follows from~\cite[Proposition~10]{BM15}. The case of general $k$ follows since 
\[
\partial_t^j\bar v(y,u)
=\int_{\pi^{-1}(y)} \partial_t^jv(y',u)\,d\eta_y(y')
=\overline{\partial_t^jv}(y,u), \enspace\text{ for all $j$}.
\]

It remains to prove that
$\bar v(y,R(y))=\bar v(\bar Fy,0)$ for $y\in\bar Y$.
Throughout, we regard $\{\eta_y,\,y\in\bar Y\}$ as a family of probability measures
on $Y$. In particular, $F_*\eta_y$ denotes the pushforward of $\eta_y$
(so $(F_*\eta_y)(E)=\eta_y(F^{-1}E)$).

Define $v_0:Y\to\R$ by setting $v_0(y)=v(y,0)$.  
Then $\bar v(y,0)=\eta_{y}(v_0)$, and so 
$\bar v(\bar Fy,0)=\eta_{\bar Fy}(v_0)$.
  Also, using that $R$ is constant along the stable foliation, we obtain
\begin{align*}
\bar v(y,R(y)) 
& =\int_Y v(y',R(y))\,d\eta_y(y') 
= \int_Y v(y',R(y'))\,d\eta_y(y') \nonumber
\\
&= \int_Y v(F y',0)\,d\eta_y(y')
=\eta_y(v_0\circ F)=(F_*\eta_y)(v_0).
\end{align*}
By continuity of $\bar v$,  it remains to show that $F_*\eta_y= \eta_{\bar Fy}$
for $\mu_{\bar Y}$-a.e.\ $y\in\bar Y$.   

We claim that
\[
\int_{\bar Y}F_*\eta_y\, d\mu_{\bar Y}(y)=
\int_{\bar Y}\eta_{\bar Fy}\,d\mu_{\bar Y}(y).
\]
By uniqueness of a family of conditional measures with respect to a given measure and measurable partition, we deduce from the claim that
$F_*\eta_y= \eta_{\bar Fy}$ 
for $\mu_{\bar Y}$-a.e.\ $y\in\bar Y$ as required.

It remains to prove the claim.  Since $\mu_{\bar Y}$ is $\bar F$-invariant,
we have on the one hand
\[
\int_{\bar Y}\eta_{\bar Fy}\,d\mu_{\bar Y}(y)=
\int_{\bar Y}\eta_{y}\,d(\bar F_*\mu_{\bar Y})(y)=
\int_{\bar Y}\eta_{y}\,d\mu_{\bar Y}(y).
\]
On the other hand, by $F$-invariance of $\mu_Y$,
\[
\int_{\bar Y}\eta_y\,d\mu_{\bar Y}(y)=\mu_Y=F_*\mu_Y
=\int_{\bar Y}F_*\eta_y\,d\mu_{\bar Y}(y),
\]
completing the proof of the claim.
\end{proof}
% Suppose that $v:\R^3\to\R$ is a smooth observable and consider the lift
% $v\circ p:Y^R\to\R$.  
% Proposition~\ref{prop-p} implies that $v\circ p$ projects to an element of $F_{\theta,k}(Y^R)$.

\begin{pfof}{Theorem~\ref{thm:rapid}}
We follow the argument in~\cite[Section~8]{AvGoYoc}.

Given $\gamma>0$, choose $C>0$ and $k\ge1$ as in Corollary~\ref{cor:rapid}.
We suppose that $v\in C^k(\R^3)$ and that $w\in C^\alpha(\R^3)$ for some $\alpha>0$.
% Suppose also that $v$ vanishes in a neighborhood of the 
% Poincar\'e cross-section $X$ and hence in a neighborhood of $Y$.

Recall that $p:Y^R\to \R^3$ is the semiconjugacy $p(y,u)=Z_uy$.
It suffices to prove the result for observables
$v\circ p$ and $w\circ p$ at the level of the suspension flow on $Y^R$.
By Proposition~\ref{prop-p},
$v\circ p\in F_{\theta,k}(Y^R)$.

Without loss, we may suppose that $\int_{Y^R} v\circ p\,d\mu=0$.
Define the semiconjugacy $\pi^R:Y^R\to \bar Y^R$, $\pi^R(y,u)=(\pi y,u)$, so
$\bar S_t\circ \pi^R=\pi^R\circ S_t$.

Define $w_t:Y^R\to\R$ by setting
\[
w_t(y,u)=\int_{\pi^{-1}(y)} w\circ p\circ S_t(y',u)\,d\eta_{y}(y').
\]
Then $\int_{Y^R} v\circ p\;w\circ p\circ S_{2t}\,d\mu
=I_1(t)+I_2(t)$, where
\begin{align*}
I_1(t) & = \int_{Y^R} v\circ p\;w\circ p\circ S_{2t}\,d\mu-
 \int_{Y^R} v\circ p\;w_t\circ \bar S_t\circ\pi^R\,d\mu \\
 I_2(t) & = \int_{Y^R} v\circ p\;w_t\circ \bar S_t\circ\pi^R\,d\mu.
\end{align*}

Now $I_1(t)= \int_{Y^R} v\circ p\,\{(w\circ p\circ S_t- w_t\circ \pi^R)\circ S_t\}\,d\mu$, so
$|I_1(t)|\le |v|_1 |w\circ p\circ S_t- w_t\circ \pi^R|_\infty$.
Using the definitions of $\pi^R$ and $w_t$,
\begin{align*}
w\circ p\circ S_t(y,u)- w_t\circ \pi^R(y,u) & =
w\circ p\circ S_t(y,u)- w_t(\pi y,u)\\  & =
\int_{\pi^{-1}(y)}
(w\circ p\circ S_t(y,u)- w\circ p\circ S_t (y',u))\,d\eta_{\pi(y)}(y').
\end{align*}
Since $S_t$ contracts exponentially along stable manifolds, there
are constants $C_2,a>0$ (dependent on $\alpha$) such that
$|w\circ p\circ S_t(y,u)- w\circ p\circ S_t (y',u)|\le 
C_2|w|_\alpha e^{-at}$ for all $y'\in\pi^{-1}(y)$, $(y,u)\in Y^R$, and so
$|w\circ p\circ S_t- w_t\circ \pi^R|_\infty  \le C_2|w|_\alpha e^{-at}$.
Hence 
\begin{align*}
|I_1(t)| & 
\le C_2|v|_1|w|_\alpha e^{-at}.
\end{align*}

Next, define $\bar v:\bar Y^R\to \R$ by setting
$\bar v(y,u)=\int_{\pi^{-1}(y)} v\circ p(y',u)\,d\eta_{y}(y')$.
Since $\int_{Y^R}v\circ p\,d\mu=0$, it follows from Lemma~\ref{lem-AGY} that
$\int_{\bar Y^R}\bar v\,d\bar\mu=0$.  Moreover,
$I_2(t)=\int_{\bar Y^R}\bar v\;w_t\circ \bar S_t\,d\bar\mu$.
By Lemma~\ref{lem-AGY}, $\bar v\in F_{\theta,k}(\bar Y^R)$ and $\|\bar v\|_{\theta,k}\le C_1\|v\circ p\|_{\theta,k}$.
Clearly, $|w_t|_\infty\le |w|_\infty$.
Hence it follows from Corollary~\ref{cor:rapid} that 
$|I_2(t)|\le C\|\bar v\|_{\theta,k}|w_t|_\infty\, t^{-\gamma}
\le CC_1 \|v\|_{\theta,k}|w|_\infty\,t^{-\gamma}$
completing the proof.
% 
% To complete the proof, choose a second Poincar\'e section $X'$ disjoint from $X$.  (Again $X'$ can be modified to satisfy property~(C) in Section~\ref{sec:volume-hyperb-conseq}.)
% Fix a $C^\infty$ partition of unity $\psi_1+\psi_2\equiv1$ on $\R^3$ such that
% $\psi_1$ vanishes in a neighborhood of $X$ 
% and $\psi_2$ vanishes in a neighborhood of $X'$.
% Given $v\in C^k(\R^3)$, we can write $v=v_1+v_2$ where $v_j=v\psi_j$.
% The calculation above implies mixing rates for $v_1$ and the same argument with $X'$ in place of $X$ gives mixing rates for $v_2$.  The correlation function is linear in the observable $v$ (with $w$ fixed) so the result follows.
	\end{pfof}

 %%%%%%%%%%%%%%%%%%%%%%

\section{ASIP for time-$1$ map of a nonuniformly expanding semiflow}
\label{sec-ASIPsemiflow}

In this section, we prove the ASIP for time-$1$ maps of a
general class of sufficiently mixing nonuniformly expanding
semiflows.

Suppose that $\bar F:\bar Y\to \bar Y$ is a Gibbs-Markov map
(uniformly expanding with bounded distortion and big images --
for notational convenience we assume full branches)
with ergodic invariant measure $\mu_{\bar Y}$.  Let
$d_\theta$ denote a symbolic metric on $\bar Y$ for some
$\theta\in(0,1)$.  Let $R:\bar Y\to\R^+$ be a possibly
unbounded roof function satisfying
\begin{itemize}
\item[(i)] $R$ is bounded below,
\item[(ii)] $\mu_{\bar Y}(R>t)=O(t^{-\beta})$ for some $\beta>1$,
\item[(iii)] $|R|_\theta=\sup_{y\neq
    y'}|R(y)-R(y')|/d_\theta(y,y')<\infty$.
\end{itemize}
(This includes the case of uniformly expanding semiflows
where $R$ is bounded.)  Define the suspension semiflow
$\bar S_t:\bar Y^R\to \bar Y^R$ with ergodic invariant measure $\bar\mu=\mu_{\bar Y}\times{\rm Leb}/\int_{\bar Y}R\,d\mu_{\bar Y}$.
Let $F_{\theta,k}(\bar Y^R)$ be the space of observables introduced in
Section~\ref{sec:semiflow}.

\begin{remark} \label{rmk-iii}
Condition (iii), although satisfied for geometric Lorenz flows, is unnecessarily restrictive.  At the end of the section, we show how this condition can be relaxed.
\end{remark}

\begin{theorem} \label{thm-ASIPsemiflow} Suppose
  that~\eqref{eq-mixing} holds with $\beta>2\sqrt2+1$.  Let $v\in
  F_{\theta,k+1}(\bar Y^R)$ be an observable with mean zero.
  Then the ASIP holds for the time-$1$ map $\bar S=\bar S_1$: passing
  to an enriched probability space, there exists a sequence
  $X_0,X_1,\ldots$ of iid normal random variables with mean
  zero and variance $\sigma^2$, such
  that
\[
\sum_{j=0}^{n-1}v\circ \bar S^j=\sum_{j=0}^{n-1}
X_j+O(n^{1/4}(\log n)^{1/2}(\log\log n)^{1/4}),\enspace a.e.
\]
The variance is given by
\[
\sigma^2=\lim_{n\to\infty}\frac1n\int(\sum_{j=0}^{n-1}v\circ \bar S^j)^2\,d\bar\mu=\sum_{n=-\infty}^\infty \int v \cdot \big(v\circ \bar S^n \big) \, d\bar\mu .
\]
The degenerate case $\sigma^2=0$ occurs if and only if 
$v=\chi\circ \bar S-\chi$ for some $\chi\in L^2$.  Moreover, for any
finite $p$ we have $\chi\in L^p$ for $k$ sufficiently large.
\end{theorem}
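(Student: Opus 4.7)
The plan is to reduce the problem to an ASIP for a stationary reverse martingale via a Gordin-type martingale--coboundary decomposition for the time-$1$ map $\bar S=\bar S_1$. Let $P$ denote the transfer operator of $\bar S$ with respect to $\bar\mu$, and look for a decomposition $v=m+\chi\circ\bar S-\chi$ with $Pm=0$ and $\chi\in L^2(\bar\mu)$. The formal candidate $\chi=\sum_{n=1}^{\infty}P^n v$ satisfies $P\chi=\chi-Pv$, from which $Pm=0$ is immediate.

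The first, and main, technical step is to show that the defining series for $\chi$ converges in $L^2$. From the decay of correlations~\eqref{eq-mixing}, applied by duality with test functions $w\in L^\infty$, one gets
\[
\|P^n v\|_1 \le C\|v\|_{\theta,k}\,n^{-\gamma}
\]
for any prescribed $\gamma>0$, provided $k$ is taken large enough. Interpolating with the trivial bound $\|P^n v\|_\infty \le |v|_\infty$ yields $\|P^n v\|_2 = O(n^{-\gamma/2})$, and choosing $\gamma>2$ makes the series absolutely summable. This is the main obstacle: since $\bar S$ is only partially hyperbolic, no direct spectral information on $P$ is available, and the correlation bound is naturally an $L^1$ statement; the smoothness reserve $v\in F_{\theta,k+1}$ combined with the interpolation trick is what upgrades it to an $L^2$ statement.

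Once $\chi$ is in hand, the sequence $(m\circ\bar S^n)_{n\ge 0}$ is a stationary reverse martingale difference sequence adapted to the decreasing filtration $\mathcal F_n=\bar S^{-n}\mathcal B$, since $Pm=0$ is exactly the statement $E[m\mid\bar S^{-1}\mathcal B]=0$. Applying the reverse martingale ASIP of Cuny--Merlev\`ede to the partial sums $M_n=\sum_{j=0}^{n-1}m\circ\bar S^j$ then produces iid Gaussians $X_j$ of variance $\sigma^2=\|m\|_2^2$ approximating $M_n$ at the stated rate $n^{1/4}(\log n)^{1/2}(\log\log n)^{1/4}$. The coboundary term in the telescoping identity
\[
\sum_{j=0}^{n-1}v\circ\bar S^j = M_n + \chi\circ\bar S^n - \chi
\]
is $o(n^{1/p})$ a.s.\ whenever $\chi\in L^p$ (by Birkhoff applied to $|\chi|^p$), which for $p>4$ is absorbed into the stated error. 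The variance formula follows from the orthogonality relation $\|M_n\|_2^2=n\|m\|_2^2$ and expansion of $\|m\|_2^2$ using $Pm=0$, which produces the two-sided series in $v$.

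For the degenerate case, $\sigma^2=\|m\|_2^2=0$ forces $m=0$ $\bar\mu$-a.e., giving $v=\chi\circ\bar S - \chi$ with $\chi\in L^2$. For any finite $p$, the same interpolation argument gives $\|P^n v\|_p = O(n^{-\gamma(1-1/p)})$ once $k$ is large enough, and this is summable provided $\gamma>p/(p-1)$; hence the series defining $\chi$ converges in $L^p$, proving $\chi\in L^p$ as claimed.
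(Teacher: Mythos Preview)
Your Gordin-type decomposition is a natural first instinct, but it does not prove the theorem as stated. The hypothesis fixes a single $\beta>2\sqrt2+1\approx 3.83$ (with the corresponding $k$ coming from~\eqref{eq-mixing}); you are not free to enlarge it mid-proof. From the $L^1$ duality bound $\|L_nv\|_1\le C\|v\|_{\theta,k}n^{-\beta}$ and interpolation with $\|L_nv\|_\infty\le|v|_\infty$ one gets $\|L_nv\|_p=O(n^{-\beta/p})$, not $O(n^{-\beta(1-1/p)})$ as you wrote; this is summable only for $p<\beta$. Hence $\chi=\sum_{n\ge1}L_nv$ and $m=v-\chi\circ\bar S+\chi$ lie in $L^p$ only for $p<\beta<4$. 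But the reverse-martingale ASIP with error $O(n^{1/4}(\log n)^{1/2}(\log\log n)^{1/4})$ is not available from $m\in L^p$, $p<4$, alone---it needs at least $m\in L^4$ together with control of the conditional variances---and your own scheme requires $p>4$ to absorb the coboundary into the error. So your argument would establish only a weaker statement (essentially $\beta>4$), and it never uses the extra flow-derivative in the hypothesis $v\in F_{\theta,k+1}$.

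The paper proceeds differently. Rather than construct $\chi$ explicitly, it applies \cite[Theorem~3.2]{CunyMerlevedesub} to $v$ directly, checking three conditions on $|L_nv|_4$, $|L_nv|_2$, and $|L_i(vL_jv)-\int vL_jv\,d\bar\mu|_2$. The first two follow from the interpolation estimate (Proposition~\ref{prop-dual}) once $\beta>\tfrac72$. The third is the crux and explains both the threshold $2\sqrt2+1$ and the need for $v\in F_{\theta,k+1}$: one must show that $vL_tv\in F_{\theta,k}(\bar Y^R)$ with norm growing only linearly in $t$, so that the decay bound~\eqref{eq-mixing} can be applied a second time. This is Lemma~\ref{lem-power}, and its proof (specifically term~$III$ in~\eqref{eq-IV}, where $R$ enters through the change of time variable) is precisely where the extra derivative is spent. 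A balancing of the resulting double sum (Corollary~\ref{cor-CM3}) then produces the threshold $\beta>2\sqrt2+1$. Your route bypasses this estimate entirely, which is why it reaches neither the extra smoothness hypothesis nor the sharper threshold.
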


\begin{remark}
  (a) The CLT and functional CLT can be proved more directly under
  weaker assumptions: it suffices that $\beta>1$,
  see~\cite[Theorem~1]{Tyran-Kaminska05}.   
Also, the statements about the variance in Theorem~\ref{thm-ASIPsemiflow}
are a standard consequence of the methods there.

(b) An ASIP with weaker
error term can be proved for smaller values of $\beta$.
However, in our application to the geometric Lorenz
attractor, we can obtain any desired value of $\beta$ by
increasing the smoothness of the observable, and there is no
easy relationship between the degree of smoothness and the
size of $\beta$, so there seems little point in pursuing
this here.
\end{remark}

First, we relate the transfer operators for the semiflow
and the induced map $\bar F$.
Let $L_t$
be the transfer operator corresponding to the semiflow $\bar S_t$,
so $\int_{\bar Y^R} L_tv\,w\,d\bar\mu=\int_{\bar Y^R} v\,w\circ \bar S_t\,d\bar\mu$ for 
all $v\in L^1(\bar Y^R)$, $w\in L^\infty(\bar Y^R)$.  In particular,
$L_n$ is the transfer operator for $\bar S^n$.
Let $P$ denote the transfer operator for $\bar F:\bar Y\to
\bar Y$, so $\int_{\bar Y}Pv\,w\,d\mu_{\bar Y}
=\int_{\bar Y}v\,w\circ \bar F\,d\mu_{\bar Y}$
for all $v\in L^1(\bar Y)$, $w\in L^\infty(\bar Y)$.

\begin{proposition} \label{prop-Lt} Let $t>0$.  The transfer
  operator $L_t$ is given by a finite sum of the form
  \begin{align*}
    (L_tv)(y,u)= \sum_{j=0}^\infty (P^j\tilde v_{t,u,j})(y),
    \quad \text{with}\quad\tilde v_{t,u,j}(y)= v(y,u-t+R_j(y)).
  \end{align*}
The number of nonzero terms in the sum is bounded by
$t/\inf R+1$.
\end{proposition}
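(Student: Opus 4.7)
\textbf{Proof plan for Proposition~\ref{prop-Lt}.}
The approach is to derive the formula directly from the duality
\[
\int_{\bar Y^R} (L_tv)\,w\,d\bar\mu = \int_{\bar Y^R} v\,(w\circ\bar S_t)\,d\bar\mu,
\]
by unfolding the RHS using the product structure
$d\bar\mu \propto d\mu_{\bar Y}\,du$ and then identifying the result with the LHS on each fiber $[0,R(y'))$. The essential structural input is the explicit formula for the semiflow: on the set
\[
A_j=\{(y,u): u\in[0,R(y)),\ R_j(y)\le u+t<R_{j+1}(y)\},
\]
we have $\bar S_t(y,u)=(\bar F^j y,\,u+t-R_j(y))$, and $\{A_j\}_{j\ge0}$ partitions $\bar Y^R$.

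On $A_j$ I would perform the substitution $s=u+t-R_j(y)$, converting the fiber integral $\int_{A_j(y)} v(y,u)\,w(\bar F^j y,u+t-R_j(y))\,du$ into an integral over $s\in C_j(y)$ of $v(y,s-t+R_j(y))\,w(\bar F^j y,s)$, where
\[
C_j(y)=[\max(0,t-R_j(y)),\,\min(R(\bar F^j y),\,R(y)+t-R_j(y))).
\]
An application of Fubini to pull the $s$-integration outside, followed by the defining property of $P^j$ (which moves the argument of $w$ from $\bar F^j y$ to $y'$), gives
\[
\int_{\bar Y}\int_0^{R(y')}\Bigl(\sum_{j=0}^\infty (P^j g_{j,u})(y')\Bigr)w(y',u)\,du\,d\mu_{\bar Y}(y'),
\]
where $g_{j,u}(y)=v(y,u-t+R_j(y))\mathbf{1}_{C_j(y)}(u)$. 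Since $w$ is arbitrary, this forces the stated formula, with the understanding that $\tilde v_{t,u,j}(y)=v(y,u-t+R_j(y))$ is zero whenever $u-t+R_j(y)\notin[0,R(y))$ (which is exactly the indicator $\mathbf{1}_{C_j(y)}(u)$ restricted to $u\in[0,R(y'))$; note $R(\bar F^j y)=R(y')$ so the upper bound $u<R(\bar F^j y)$ is automatic).

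For the finiteness claim, I would use that $R_j(y)\ge j\inf R$ and analyze when $\tilde v_{t,u,j}\equiv 0$. The argument of $v$ lies in $[0,R(y))$ iff $R_j(y)\ge t-u$ and $R_j(y)-R(y)<t-u$. The second condition rewrites as $R_{j-1}(\bar F y)<t-u\le t$ (for $j\ge1$), which fails whenever $(j-1)\inf R\ge t$, i.e., $j\ge t/\inf R+1$. For all such $j$ the function $\tilde v_{t,u,j}$ vanishes identically and the corresponding term is zero, leaving at most $t/\inf R+1$ potentially nonzero terms.

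The main obstacle is really just bookkeeping: handling the domain constraints $u\in[0,R(y))$ and $s\in[0,R(\bar F^j y))$ after the change of variables, and verifying that the implicit indicator $\mathbf{1}_{C_j(y)}(u)$ that arises is precisely the one built into the convention $\tilde v_{t,u,j}(y)=v(y,u-t+R_j(y))$ (interpreted as zero off the fundamental domain). No analytic difficulty arises because only finitely many terms contribute for each $(y,u)$.
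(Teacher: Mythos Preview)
Your proposal is correct and follows essentially the same approach as the paper: both start from the duality $\int (L_tv)w\,d\bar\mu=\int v\,(w\circ\bar S_t)\,d\bar\mu$, decompose according to the lap number $j$ (your sets $A_j$ correspond to the paper's intervals $[R_j(y)-t,R_{j+1}(y)-t)$ intersected with $[0,R(y))$), perform the substitution $s=u+t-R_j(y)$, and invoke the duality defining $P^j$. Your treatment of the indicator $\mathbf{1}_{C_j(y)}$ makes explicit what the paper encodes via the product $1_{[0,R(y)]}(u-t+R_j(y))\,1_{[0,R(\bar F^jy)]}(u)$, and your finiteness argument via $R_{j-1}(\bar Fy)<t$ matches the paper's $R_j(\bar Fy)<t$ up to a harmless index shift.
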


\begin{proof}
Let $\bar R=\int_Y R\,d\mu_{\bar Y}$ and write
\begin{align*}
\int_{\bar Y^R}v\,w\circ \bar S_t\,d\bar\mu & =
(1/\bar R)\int_Y\int_0^{R(y)}v(y,u)\,w\circ \bar S_t(y,u)\,du\,d\mu_{\bar Y}
\\ & =
(1/\bar R)\sum_{j=0}^\infty\int_Y
\int_{R_j(y)-t}^{R_{j+1}(y)-t}
1_{[0,R(y)]}(u)v(y,u)\,w(\bar S_t(y,u))\,du\,d\mu_{\bar Y}.
\end{align*}
For the $j$'th term to give a nonzero contribution to the
sum, it is necessary that $R_{j+1}(y)-t<R(y)$ for some $y$,
equivalently $R_j(\bar Fy)<t$, leading to the condition that
$0\le j\le t/\inf R$.  Now
\begin{align*}
  & \int_{\bar Y}\int_{R_j(y)-t}^{R_{j+1}(y)-t}
  1_{[0,R(y)]}(u)v(y,u)\,w(\bar S_t(y,u))\,du\,d\mu_{\bar Y} 
  \\ & \qquad
  = \int_{\bar Y}\int_{R_j(y)-t}^{R_{j+1}(y)-t}
  1_{[0,R(y)]}(u)v(y,u)\,w(\bar
  F^jy,u+t-R_j(y))\,du\,d\mu_{\bar Y}
  \\ & \qquad =
  \int_{\bar Y}\int_0^{R(\bar F^ky)}
  1_{[0,R(y)]}(u-t+R_j(y))v(y,u-t+R_j(y))\,w(\bar
  F^ky,u)\,du\,d\mu_{\bar Y}
  \\
  & \qquad = \int_0^\infty \int_{\bar Y} 1_{[0,R(\bar F^k(y))]}(u)
  1_{[0,R(y)]}(u-t+R_j(y))v(y,u-t+R_j(y))\,w(\bar
  F^ky,u)\,d\mu_{\bar Y}\,du
  \\
  & \qquad = \int_0^\infty \int_{\bar Y} \tilde v_{t,u,j}(y)
  \,1_{[0,R(\bar F^k(y))]}(u)w(\bar F^ky,u)\,d\mu_{\bar Y}\,du
  \\ &
  \qquad = \int_0^\infty \int_{\bar Y} (P^j\tilde v_{t,u,j})(y)
  \,1_{[0,R(y)]}(u)w(y,u)\,d\mu_{\bar Y}\,du 
  \\ & \qquad = \int_{\bar Y}
  \int_0^{R(y)} (P^j\tilde v_{t,u,j})(y)
  \,w(y,u)\,du\,d\mu_{\bar Y},
\end{align*}
as required.
\end{proof}

Theorem~\ref{thm-ASIPsemiflow} is a consequence of Cuny \&
Merlev\`ede~\cite[Theorem~3.2]{CunyMerlevedesub}.  
To apply~\cite{CunyMerlevedesub}, we are required
to check that the following three conditions hold:
\begin{align} \label{eq-CM1}
& \sum_{n=1}^\infty(\log n)^3 n^{5/2}|L_nv|_4^4<\infty, \\
\label{eq-CM2}
& \sum_{n=1}^\infty(\log n)^3 n|L_nv|_2^2<\infty, \\
\label{eq-CM3}
& \sum_{n=1}^\infty(\log n)^3 n^{-2}\Bigl(\sum_{i=1}^n\sum_{j=0}^{n-i}|L_i(vL_jv)-{\textstyle\int}_{\bar Y^R} vL_jv\,d\bar\mu|_2\Bigr)^2<\infty.
\end{align}

\begin{proposition}  \label{prop-dual}
Let $p\in[1,\infty)$.   Then there is a constant $C>0$ such that $|L_tv-\int v\,d\bar\mu|_p\le C\|v\|_{\theta,k}t^{-\beta/p}$
for all $v\in F_{\theta,k}(\bar Y^R)$.
\end{proposition}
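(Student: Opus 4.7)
The plan is to combine the mixing hypothesis with $L^1$--$L^\infty$ duality to obtain an $L^1$ estimate, then interpolate against a trivial $L^\infty$ contractivity bound. The polynomial rate $t^{-\beta/p}$ arises precisely as the log-convexity interpolant of rate $t^{-\beta}$ at $p=1$ and rate $t^{0}$ at $p=\infty$.

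First I would record the $L^\infty$ contractivity of $L_t$. Because $\bar\mu$ is $\bar S_t$-invariant, composition $w\mapsto w\circ\bar S_t$ is an isometry of $L^1(\bar Y^R)$, so its dual $L_t$ has operator norm at most $1$ on $L^\infty(\bar Y^R)$. Using also that $|\int v\,d\bar\mu|\le|v|_\infty\le\|v\|_{\theta,k}$ (the latter being immediate from the definition of $\|\cdot\|_{\theta,k}$), this gives the trivial bound
\begin{align*}
\Bigl|L_tv-\int v\,d\bar\mu\Bigr|_\infty\le 2\|v\|_{\theta,k}.
\end{align*}

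Next I would derive an $L^1$ bound via $L^1$--$L^\infty$ duality. For any $w\in L^\infty(\bar Y^R)$, the defining property of the transfer operator together with $S_t$-invariance gives
\begin{align*}
\int\Bigl(L_tv-\int v\,d\bar\mu\Bigr)w\,d\bar\mu
= \int v\,(w\circ\bar S_t)\,d\bar\mu-\int v\,d\bar\mu\int w\,d\bar\mu.
\end{align*}
Taking the supremum over $|w|_\infty\le1$ and applying the mixing estimate \eqref{eq-mixing} at the fixed rate $\gamma=\beta$ (permissible since the assumed decay is superpolynomial in $\gamma$, after possibly enlarging $k$) yields
\begin{align*}
\Bigl|L_tv-\int v\,d\bar\mu\Bigr|_1\le C\|v\|_{\theta,k}t^{-\beta}.
\end{align*}

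The conclusion then follows from the log-convexity of $L^p$ norms (Lyapunov's inequality) $|f|_p\le|f|_1^{1/p}|f|_\infty^{1-1/p}$, applied to $f=L_tv-\int v\,d\bar\mu$, which combines the two bounds to give
\begin{align*}
\Bigl|L_tv-\int v\,d\bar\mu\Bigr|_p
\le (C\|v\|_{\theta,k}t^{-\beta})^{1/p}(2\|v\|_{\theta,k})^{1-1/p}
\le C'\|v\|_{\theta,k}t^{-\beta/p},
\end{align*}
after absorbing constants. I do not foresee any genuine obstacle here: the mixing estimate is already in hand from Theorem~\ref{thm-M07}, and the $L^\infty$ contractivity and Lyapunov interpolation are textbook facts. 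The only point worth checking is that \eqref{eq-mixing} allows the test function $w$ to lie in $L^\infty$ with no smoothness assumption, which is precisely the form in which Theorem~\ref{thm-M07} was stated; all regularity is placed on $v$, which is compatible with the class $F_{\theta,k}(\bar Y^R)$ hypothesised here.
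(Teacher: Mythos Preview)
Your argument is correct and essentially identical to the paper's: the paper also obtains the $L^1$ bound by substituting $w=\operatorname{sgn}(L_tv-\int v)$ into~\eqref{eq-mixing} (your duality supremum is achieved precisely at this $w$), uses $|L_tv|_\infty\le|v|_\infty$, and interpolates via $|f|_p^p\le|f|_\infty^{p-1}|f|_1$. One small point: in the context of Section~\ref{sec-ASIPsemiflow} the hypothesis is that~\eqref{eq-mixing} holds with a single fixed rate $\beta$ and fixed $k$, so your parenthetical about superpolynomial decay and enlarging $k$ is unnecessary---the rate $\beta$ is already given.
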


\begin{proof} Following~\cite{MelTor02}, we set
  $w=\operatorname{sgn}  L_tv$ in~\eqref{eq-mixing} to
  obtain $|L_tv|_1\le C \|v\|_{\theta,k}t^{-\beta}$.  Since
  $v\in L^\infty$ and $|L_tv|_\infty\le |v|_\infty$, we
  obtain $|L_tv|_p^p\le |v|_\infty^{p-1}|L_tv|_1\le
  C\|v\|_{\theta,k}^pt^{-\beta}$.
\end{proof}

\begin{lemma} \label{lem-power}
There exists a constant $C_1$ (depending on $k$ and $\theta$)
such that
\begin{align} \label{eq-power}
\|vL_t v\|_{\theta,k}\le C_1(t+1)(|R|_\theta+1)\|v\|_{\theta,k+1}^2,
\end{align} 
for all $t\ge0$.
\end{lemma}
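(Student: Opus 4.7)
The plan is to first bound the flow-direction norm of $L_tv$ in terms of $v$, then apply a Leibniz product rule in $F_{\theta,k}(\bar Y^R)$. Specifically, I will first establish
\[
\|L_tv\|_{\theta,k}\le C(t+1)(|R|_\theta+1)\|v\|_{\theta,k+1},
\]
and then combine this with the inequality $\|fg\|_{\theta,k}\le C_k\|f\|_{\theta,k}\|g\|_{\theta,k}$ (an elementary consequence of the Leibniz formula together with $|ab|_\infty\le|a|_\infty|b|_\infty$ and $|ab|_\theta\le|a|_\infty|b|_\theta+|b|_\infty|a|_\theta$) applied with $f=v$ and $g=L_tv$. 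This yields the desired bound $\|vL_tv\|_{\theta,k}\le C_1(t+1)(|R|_\theta+1)\|v\|_{\theta,k+1}^2$.

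For the bound on $L_tv$, Proposition~\ref{prop-Lt} gives $(L_tv)(y,u)=\sum_{j=0}^{N}(P^j\tilde v_{t,u,j})(y)$ with at most $N+1\le t/\inf R+2$ nonzero terms and $\tilde v_{t,u,j}(y)=v(y,u-t+R_j(y))$. Since $v$ is $C^{k+1}$ in the flow direction and respects the identifications at the boundary of $\bar Y^R$, the chain rule gives $\partial_u\tilde v_{t,u,j}=\widetilde{(\partial_tv)}_{t,u,j}$, hence $\partial_t^iL_tv=L_t\partial_t^iv$ for $0\le i\le k$. So it is enough to show, for any $w\in F_{\theta,1}(\bar Y^R)$,
\[
\|L_tw\|_\theta\le C(t+1)(|R|_\theta+1)\|w\|_{\theta,1},
\]
and apply this with $w=\partial_t^iv$, $i=0,\dots,k$.

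For this last estimate, the $L^\infty$ bound $|L_tw|_\infty\le(N+1)|w|_\infty$ is immediate from $P\mathbf{1}=\mathbf{1}$ and positivity. For the H\"older seminorm, I would expand $(P^j\tilde w_{t,u,j})(y)=\sum_h g_j(h(y))w(h(y),u-t+R_j(h(y)))$ over inverse branches $h$ of $\bar F^j$ and control the resulting difference $(P^j\tilde w_{t,u,j})(y_1)-(P^j\tilde w_{t,u,j})(y_2)$ using: (i) the usual Gibbs-Markov distortion $|g_j(h(y_1))-g_j(h(y_2))|\le Cg_j(h(y_1))d_\theta(y_1,y_2)$; (ii) the $\theta^j$ H\"older gain from $w$ in the first coordinate; and (iii) the bound $|R_j(h(y_1))-R_j(h(y_2))|\le|R|_\theta\sum_{i=0}^{j-1}\theta^{j-i}d_\theta(y_1,y_2)\le C|R|_\theta d_\theta(y_1,y_2)$ on each inverse branch, multiplied by $|\partial_tw|_\infty$. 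Summing over $h$ (using $\sum_h g_j=1$) and over $j\le N$ yields the estimate.

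The main obstacle is the third ingredient above: although $R_j$ is not $d_\theta$-Lipschitz on $\bar Y$ with a constant independent of $j$, its composition with any inverse branch of $\bar F^j$ is, thanks to backward contraction. Separating the pure spatial H\"older contribution of $w$ (which decays geometrically in $j$) from the flow-derivative contribution (which is only $j$-uniformly bounded, and is what produces the factor $|R|_\theta+1$) is the one place in the argument where one must be careful; everything else reduces to elementary Leibniz and triangle inequality manipulations.
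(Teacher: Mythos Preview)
Your proposal is correct and follows essentially the same route as the paper: reduce via the product inequality $\|vL_tv\|_{\theta,k}\le C\|v\|_{\theta,k}\|L_tv\|_{\theta,k}$, use Proposition~\ref{prop-Lt} together with $\partial_t P^j\tilde v=P^j\widetilde{(\partial_tv)}$ to reduce to the case $k=0$, and then estimate $|P^j\tilde v_{t,u,j}|_\theta$ via the Gibbs--Markov distortion bound, the $\theta^j$ contraction of inverse branches, and the bound $|R_j(h(y))-R_j(h(y'))|\le (1-\theta)^{-1}|R|_\theta d_\theta(y,y')$ on each inverse branch (exactly the paper's terms I, II, III). The paper also records a term IV corresponding to varying $u$, which is superfluous for $|\,\cdot\,|_\theta$ as defined; your omission of it is harmless.
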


\begin{proof}
  It is clear that $\|vL_tv\|_{\theta,k}\le
  C\|v\|_{\theta,k}\|L_tv\|_{\theta,k}$ where $C$ depends
  only on $k$.  It remains to estimate
  $\|L_tv\|_{\theta,k}$.  By Proposition~\ref{prop-Lt}, it
  suffices to estimate $\|P^j\tilde v_{t,u,j}\|_{\theta,k}$
  uniformly in $j$ and $t$, since there are at most $t/\inf
  R +1$ elements in the sum.

  Note also that $\partial_t P^j\tilde v
  =P^j\widetilde{(\partial_tv)}$.  Hence it suffices to
  prove that $\|P^j\tilde v_{t,u,j}\|_{\theta,0}\le
  C(|R|_\theta+1) \|v\|_{\theta,1}$ uniformly in $j$ and
  $t$.

For general reasons, $|P^j\tilde v_{t,u,j}|_\infty\le
|\tilde v_{t,u,j}|_\infty =|v|_\infty$.  Next we recall that
$(Pv)(y)=\sum_{a\in\alpha}e^{p(y_a)}v(y_a)$ where $\alpha$
is the underlying partition, $y_a$ is the unique preimage
$\bar F^{-1}y$ lying in $a$ (this is where we assume full
branches; otherwise there may be no preimage and the term is
simply omitted) and $p$ is the potential.  Iterating, we
obtain
\[
(P^j\tilde v_{t,u,j})(y)=\sum_{a\in\alpha_j}e^{p_j(y_a)}v(y_a,u-t+R_j(y_a))
\]
where $\alpha_j$ is the partition of $j$-cylinders and
$p_j=\sum_{i=0}^{j-1}p\circ \bar F^i$,
$R_j=\sum_{i=0}^{j-1}R\circ \bar F^i$.  (Again $y_a$ denotes
the unique preimage $\bar F^{-j}y$ lying in $a$.)  We recall
the standard estimate for Gibbs-Markov expanding maps: there
is a constant $C>0$ such that
\begin{align} \label{eq-GMest} |e^{p_n(y)}-e^{p_n(y')}|\le
  Ce^{p_n(y)}d_\theta(\bar F^ny,\bar F_ny'), \quad\text{for all
    $y,y'\in a$, $a\in\alpha_n$, $n\ge1$.}
\end{align}
Also, an easy calculation shows that (see the proof of
Lemma~\ref{le:Lip-symbolic})
\begin{align} \label{eq-Rest} |R_n(y)-R_n(y')|\le
  (1-\theta)^{-1}|R|_\theta d_\theta(\bar F^ny,\bar
  F^ny'),\quad\text{ for all $y,y'\in a$, $a\in\alpha_n$,
    $n\ge1$.}
\end{align}

Let $(y,u),(y',u')\in \bar Y^R$.  We suppose without loss
that $u'\le u$.  Then
\begin{align} \label{eq-IV}
(P^j\tilde v_{t,u,j})(y)-
(P^j\tilde v_{t,u',j})(y')=I+II+III+IV,
\end{align}
where
\begin{align*}
I
& =
\sum_{a\in\alpha_j}(e^{p_j(y_a)}-e^{p_j(y'_a)})v(y_a,u-t+R_j(y_a)), 
\\
II 
& = 
\sum_{a\in\alpha_j}e^{p_j(y'_a)}\cdot[v(y_a,u-t+R_j(y_a))-v(y'_a,u-t+R_j(y_a))], \\
III 
& = 
\sum_{a\in\alpha_j}e^{p_j(y'_a)}\cdot[v(y_a',u-t+R_j(y_a))-v(y'_a,u-t+R_j(y'_a))], 
\\
IV 
& = 
\sum_{a\in\alpha_j}e^{p_j(y'_a)}\cdot[v(y'_a,u-t+R_j(y'_a))-v(y'_a,u'-t+R_j(y'_a))].
\end{align*}
Using~\eqref{eq-GMest} and~\eqref{eq-Rest}, $|I|\le
C\sum_{a\in\alpha_j}e^{p_j(y_a)}|v|_\infty=C|v|_\infty$, and
\[
|III|\le
\sum_{a\in\alpha_j}e^{p_j(y'_a)}|\partial_t v|_\infty|R_j(y_a)-R_j(y_a')|
\le(1-\theta)^{-1}|\partial_t v|_\infty|R|_\theta d_\theta(y,y').
\]
Also $|II|\le 
\sum_{a\in\alpha_j}e^{p_j(y'_a)}|v|_\theta d_\theta(y_a,y'_a)=
\theta^j|v|_\theta d_\theta(y_a,y'_a)$ and
$|IV|\le |\partial_tv|_\infty |u-u'|$.
Hence \newline
$\|P^j\tilde v_{t,u,j}\|_{\theta,0}\le C(|R|_\theta+1)\|v\|_{\theta,1}$
as required.
\end{proof}

\begin{corollary} \label{cor-CM3}
There exists a constant $C$ (depending on $k$, $\theta$ and $|R|_\theta$)
such that
\[
\sum_{i=1}^n\sum_{j=0}^{n-i}|L_i(vL_jv)-{\textstyle\int}_{\bar Y^R} vL_jv\,d\bar\mu|_2
\le C(n^{\sqrt2+1-\beta/2}+1)\|v\|_{\theta,k+1}^2,
\]
for all mean zero $v\in F_{\theta,k}(\bar Y^R)$.
\end{corollary}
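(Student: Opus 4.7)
Set $w_j=vL_jv$, so the quantity to bound is $|L_iw_j-\int w_j\,d\bar\mu|_2$. The strategy is to derive two complementary $L^2$ estimates and interpolate between them.

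\emph{Estimate (A).} By Lemma~\ref{lem-power}, $\|w_j\|_{\theta,k}\le C(j+1)\|v\|_{\theta,k+1}^2$ (with $C$ absorbing $|R|_\theta$). Applying Proposition~\ref{prop-dual} with $p=2$ to $w_j$:
\[
|L_iw_j-\textstyle\int w_j\,d\bar\mu|_2\le C(j+1)i^{-\beta/2}\|v\|_{\theta,k+1}^2.
\]
\emph{Estimate (B).} Exploiting the mean-zero hypothesis on $v$, Proposition~\ref{prop-dual} yields $|L_jv|_2\le C\|v\|_{\theta,k}j^{-\beta/2}$, so $|w_j|_2\le|v|_\infty|L_jv|_2\le C\|v\|_{\theta,k+1}^2 j^{-\beta/2}$ for $j\ge1$. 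Since $L_i$ is an $L^2$-contraction and $|\int w_j|\le|w_j|_2$,
\[
|L_iw_j-\textstyle\int w_j\,d\bar\mu|_2\le 2|w_j|_2\le C\|v\|_{\theta,k+1}^2 j^{-\beta/2}.
\]

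\textbf{Interpolation and summation.} Use $\min(A,B)\le A^\alpha B^{1-\alpha}$ with $\alpha=\sqrt2-1$, giving, for $j\ge1$,
\[
|L_iw_j-\textstyle\int w_j\,d\bar\mu|_2\le C\|v\|_{\theta,k+1}^2(j+1)^\alpha i^{-\alpha\beta/2}j^{-(1-\alpha)\beta/2}.
\]
Summing (first over $i\le n$, then over $j\le n$), a direct check shows that for $2\sqrt2+1<\beta<2\sqrt2+2$ both sums grow polynomially: the inner $i$-sum contributes $n^{1-\alpha\beta/2}$ and the outer $j$-sum contributes $n^{1+\alpha-(1-\alpha)\beta/2}$. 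Adding these exponents gives
\[
[1-\alpha\beta/2]+[1+\alpha-(1-\alpha)\beta/2]=2+\alpha-\beta/2=\sqrt2+1-\beta/2,
\]
because the $\beta$-coefficients combine to $-\tfrac12$. For $\beta\ge2\sqrt2+2$, both series are bounded, yielding the $O(1)$ contribution that accounts for the $+1$ in the statement. The $j=0$ term, where $w_0=v^2$, is handled directly by estimate (A): $\sum_i i^{-\beta/2}$ converges since $\beta>2$.

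\textbf{Main obstacle.} The crux is identifying the optimal interpolation exponent $\alpha=\sqrt2-1$. This value is forced by the simplification $2+\alpha-\beta/2=\sqrt2+1-\beta/2$ of the combined exponent, and it is precisely this choice that makes the $\beta$-dependence collapse to $-\beta/2$. This also explains the hypothesis $\beta>2\sqrt2+1$ in Theorem~\ref{thm-ASIPsemiflow}: condition~\eqref{eq-CM3} requires $n^{-2}S_n^2$ (up to logarithmic factors) to be summable, which demands $\sqrt2+1-\beta/2<1/2$. A naive use of just estimate (A) or just (B), or even of $\min(A,B)$ without interpolation, gives the weaker exponent $2+2/\beta-\beta/2$, insufficient to cover the threshold $\beta=2\sqrt2+1$; the interpolation is essential to squeeze out the $\sqrt2$.
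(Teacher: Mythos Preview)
Your proof is correct and closely parallels the paper's, but packages the combination of estimates (A) and (B) differently. The paper derives the same two estimates and then, instead of interpolating, splits the double sum into the regions $\{j\le i^{1/\sqrt2}\}$ (where it applies (A)) and $\{i\le j^{\sqrt2}\}$ (where it applies (B)); each region contributes $O(n^{\sqrt2+1-\beta/2}+1)$, since $\sum_{j\le i^{1/\sqrt2}}(j+1)\sim i^{\sqrt2}$ and $\sum_{i\le j^{\sqrt2}}1\sim j^{\sqrt2}$ both lead to $\sum i^{\sqrt2-\beta/2}$. Your interpolation exponent $\alpha=\sqrt2-1$ and the paper's threshold exponent $\sqrt2$ are two manifestations of the same balancing: a split at $i=j^c$ yields regional exponents $2/c+1-\beta/2$ and $c+1-\beta/2$, equalized at $c=\sqrt2$. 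One correction to your closing commentary: the paper's approach \emph{is} a direct use of $\min(A,B)$ via region-splitting (just with the threshold $i=j^{\sqrt2}$ rather than the $A=B$ balance point $i=j^{1+2/\beta}$), and it attains exactly the exponent $\sqrt2+1-\beta/2$; so interpolation is not essential, only the correct choice of the free parameter is.
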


\begin{proof}
On one hand, by Proposition~\ref{prop-dual} and Lemma~\ref{lem-power},
\[
|L_i(vL_jv)-{\textstyle\int}_{\bar Y^R}vL_jv\,d\bar\mu|_2\le C
i^{-\beta/2}\|vL_jv\|_{\theta,k}\le C'
i^{-\beta/2}(j+1)\|v\|_{\theta,k+1}^2.
\]
On the other hand,
\begin{align*}
|L_i(vL_jv)-{\textstyle\int}_{\bar Y^R}vL_jv\,d\bar\mu|_2 & \le
|vL_jv-{\textstyle\int}_{\bar Y^R}vL_jv\,d\bar\mu|_2 \\ & \le 
2|vL_jv|_2 \le 2|v|_\infty|L_jv|_2\le C (j+1)^{-\beta/2}\|v\|_{\theta,k}^2.
\end{align*}
Using the first estimate,
\begin{align*}
\sum_{i=1}^n\sum_{j=0}^{i^{1/\sqrt2}}
|L_i(vL_jv)-{\textstyle\int}_{\bar Y^R}vL_jv\,d\bar\mu|_2\le C\|v\|_{\theta,k+1}^2
\sum_{i=1}^n\sum_{j=0}^{i^{1/\sqrt 2}}(j+1)i^{-\beta/2}\le C'\|v\|_{\theta,k+1}^2(n^{\sqrt 2-\beta/2+1}+1).
\end{align*}
Using the second estimate,
\begin{align*}
\sum_{j=0}^n\sum_{i=1}^{j^{\sqrt2}}
|L_i(vL_jv)-{\textstyle\int}_{\bar Y^R}vL_jv\,d\bar\mu|_2\le C\|v\|_{\theta,k+1}^2
\sum_{j=0}^n\sum_{i=1}^{j^{\sqrt2}}(j+1)^{-\beta/2}\le C'\|v\|_{\theta,k+1}^2(n^{\sqrt2-\beta/2+1}+1).
\end{align*}
Combining these gives the required estimate.
\end{proof}

Now we can complete the proof of Theorem~\ref{thm-ASIPsemiflow}.

\begin{proof}[Proof of Theorem~\ref{thm-ASIPsemiflow}]
By Proposition~\ref{prop-dual}, $|L_nv|_p^p\le C^p \|v\|_{\theta,k}^p n^{-\beta}$.
Hence
conditions~\eqref{eq-CM1} and~\eqref{eq-CM2} are satisfied for $\beta>\frac72$.
By Corollary~\ref{cor-CM3}, condition~\eqref{eq-CM3} is satisfied for $\beta>2\sqrt2+1$.
\end{proof}

Finally, as promised in Remark~\ref{rmk-iii}, we show how condition (iii) can be relaxed.
Indeed it suffices that
\begin{itemize}
\item[(iii$_1$)] $\sum_{a\in\alpha}\mu_{\bar Y}(a)\Lip_aR<\infty$,
\end{itemize}
where $\Lip_Ag=\sup_{x,y\in A,\,x\neq y}|g(x)-g(y)|/d_\theta(x,y)$ for
$g:\bar Y\to\R$, $A\subset\bar Y$.

We begin by recalling a standard estimate for Gibbs-Markov maps which we did not explicitly make use of earlier: there is a constant $C>0$ such that $|1_a e^{p_j}|_\infty\le C\mu_{\bar Y}(a)$ for all $a\in\alpha_j$, $j\ge1$.

Note that condition~(iii) was only used in the estimate of term $III$ in the proof of Lemma~\ref{lem-power}.   
But alternatively, we compute that
$|III|\le |\partial_t v|_\infty\sum_{i=0}^{j-1} A_{i,j}$
where
\begin{align*}
A_{i,j}  = \sum_{a\in\alpha_j}e^{p_j(y_a')}|R\circ \bar F^i(y_a)-R\circ \bar F^i(y_a')|
& \le  C\sum_{a\in\alpha_j}\mu_{\bar Y}(a)\Lip_{\bar F^ia}R d_\theta(\bar F^iy_a,\bar F^iy_a')
\\ & =  C\theta^{j-i}d_\theta(y,y')\sum_{a\in\alpha_j}\mu_{\bar Y}(a)\Lip_{\bar F^ia}R.
\end{align*}
Now
\begin{align*}
\sum_{a\in\alpha_j}\mu_{\bar Y}(a)\Lip_{\bar F^ia}R
& =   \sum_{b\in\alpha_{j-i}}\sum_{a\in\alpha_j:\bar F^ia=b}\mu_{\bar Y}(a)\Lip_bR
=   \sum_{b\in\alpha_{j-i}}\mu_{\bar Y}(\bar F^{-i}b)\Lip_bR
=   \sum_{b\in\alpha_{j-i}}\mu_{\bar Y}(b)\Lip_bR \\
& =   \sum_{c\in\alpha}\sum_{b\in\alpha_{j-i}:b\subset c}\mu_{\bar Y}(b)\Lip_bR 
 \le    \sum_{c\in\alpha}\sum_{b\in\alpha_{j-i}:b\subset c}\mu_{\bar Y}(b)\Lip_cR 
 =    \sum_{c\in\alpha}\mu_{\bar Y}(c)\Lip_cR .
\end{align*}
Hence $A_{i,j}\le C\theta^{j-i}d_\theta(y,y')\sum_{a\in\alpha}\mu_{\bar Y}(a)\Lip_aR$ and
\[
|III|\le C\theta(1-\theta)^{-1}|\partial_tv|_\infty \sum_{a\in\alpha}\mu_{\bar Y}(a)\Lip_aR\, d_\theta(y,y').
\]
Hence, assuming condition~(iii$_1$) instead of~(iii), we obtain an estimate
analogous to the one in Lemma~\ref{lem-power} with the
conclusion~\eqref{eq-power} replaced by the estimate
\begin{align*} 
\|vL_t v\|_{\theta,k}\le C_1(t+1)\Bigl(
\sum_{a\in\alpha}\mu_{\bar Y}(a)\Lip_aR+1\Bigr)\|v\|_{\theta,k+1}^2,
\end{align*} 
for all $t\ge0$.

\section{Nondegeneracy in the CLT and ASIP}
\label{sec-sigma}

In this section, we prove that the degenerate case $\sigma^2=0$ in
Theorem~\ref{thm-ASIPsemiflow} is of infinite codimension.
Suppose as in Theorem~\ref{thm-ASIPsemiflow} that
$v=\chi\circ \bar S-\chi$ for some $\chi\in L^2$.
Following~\cite[Proposition~2]{MelTor02}, we define
$\psi_t=\int_0^t v\circ \bar S_s\,ds$ and $h=\int_0^1 \chi\circ \bar S_s\,ds$.
Then $\psi_t$ is a continuous cocycle for the semiflow $\bar S_t$;
that is $\psi_{t+s}=\psi_s\circ \bar S_t+\psi_t$ for all $s,t\ge0$.
Moreover, $h\in L^2$ and
$\psi_t=h\circ \bar S_t-h$, so $\psi_t$ is an $L^2$ coboundary.

In the Axiom~A setting of~\cite{MelTor02} it now follows from
a Liv\v{s}ic regularity theorem of~\cite{Walkden00} that
$h$ has a H\"older version.  
Hence if $q$ is a periodic point of period $T$ for $\bar S_t$, then 
$\int_0^T v(\bar S_tq)\,dt=\psi_T(q)=h(\bar S_Tq)-h(q)=0$.
Since $\bar S_t$ has infinitely many periodic orbits, this places infinitely many
restrictions on $v$.

In the nonuniformly expanding case, the situation is similar once
we have a Liv\v{s}ic regularity theorem for nonuniformly expanding semiflows.  
As we now show, this is a straightforward combination of results of~\cite{Gouezel06} for Gibbs-Markov maps and~\cite{Walkden00} for uniformly expanding semiflows.

In the remainder of this section, we suppose as in
Section~\ref{sec-ASIPsemiflow}
that $\bar F:\bar Y\to\bar Y$ is a full-branch Gibbs-Markov map
and that  $R:\bar Y\to\R^+$ is a roof function satisfying
conditions (i), (ii) and (iii$_1$).
(The full-branch condition is relaxed in Remark~\ref{rmk:livsic}.)

Given a continuous cocycle $\psi_t$ on $\bar Y^R$, we define 
$I_\psi:\bar Y\to\R$
\[
I_\psi(y)=\psi_{R(y)}(y,0).
\]

\begin{lemma}
\label{lem:livsic}
    Let $\psi$ be a cocycle for $\bar S_t$ such that  $I_\psi:X\to G$
      satisfies
      \begin{align*}
        \sum_{a\in\alpha}\mu_{\bar Y}(a)\Lip_a I_\psi<\infty.
      \end{align*}
      Suppose that there exists $h:\bar Y^R\to \R$ measurable such that
      for all $t\ge0$: $\psi_t=h\circ \bar S_t-h$ a.e.
      Then $h$ has a version that is continuous.
  \end{lemma}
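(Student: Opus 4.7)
The plan is to reduce the continuity problem on the suspension $\bar Y^R$ to a Liv\v{s}ic-type regularity statement for the base Gibbs-Markov map $\bar F$, and then propagate the resulting continuity along the semiflow direction in the spirit of Walkden~\cite{Walkden00}.

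First I restrict the measurable cobounding function $h$ to the base cross-section by setting $h_0(y)=h(y,0)$ for $y\in\bar Y\cong\bar Y\times\{0\}$. Applying the coboundary identity $\psi_t=h\circ\bar S_t-h$ at time $t=R(y)$ together with the identification $(y,R(y))\sim(\bar Fy,0)$ on the suspension yields
\[
I_\psi(y)=\psi_{R(y)}(y,0)=h_0(\bar Fy)-h_0(y)
\]
for $\mu_{\bar Y}$-a.e.\ $y\in\bar Y$. Now the summability hypothesis $\sum_{a\in\alpha}\mu_{\bar Y}(a)\Lip_a I_\psi<\infty$ is precisely the regularity condition under which the Liv\v{s}ic theorem for Gibbs-Markov maps of Gou\"ezel~\cite{Gouezel06} applies; invoking it, $h_0$ has a $d_\theta$-Lipschitz (in particular continuous) version on $\bar Y$, which I use from now on.

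Next I extend $h_0$ back to a function on the suspension by setting $\tilde h(y,u)=h_0(y)+\psi_u(y,0)$ for $0\le u<R(y)$. This is jointly continuous in $(y,u)$ because $\psi$ is a continuous cocycle and $h_0$ is continuous. At the top of each tower I compute
\[
\tilde h(y,R(y))=h_0(y)+I_\psi(y)=h_0(\bar Fy)=\tilde h(\bar Fy,0),
\]
so $\tilde h$ respects the identifications and defines a continuous function on all of $\bar Y^R$. Finally, both $h$ and $\tilde h$ satisfy the coboundary relation with the same cocycle $\psi$, and they agree on the base by construction; from $h\circ\bar S_u(y,0)=h_0(y)+\psi_u(y,0)=\tilde h\circ\bar S_u(y,0)$ almost everywhere and Fubini applied to $\bar\mu=\mu_{\bar Y}\times\mathrm{Leb}/\int R\,d\mu_{\bar Y}$, I conclude $\tilde h=h$ $\bar\mu$-almost everywhere, so $\tilde h$ is the desired continuous version of $h$.

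The real content is the base-level Liv\v{s}ic step: condition~(iii$_1$) has been tailored so that~\cite{Gouezel06} is directly applicable to $I_\psi$, and this is the step where something nontrivial happens. Once $h_0$ is upgraded to a continuous function on $\bar Y$, the extension via the explicit formula $\tilde h(y,u)=h_0(y)+\psi_u(y,0)$ and the verification of compatibility with the top-of-tower identification are straightforward, so I do not anticipate additional technical obstacles beyond invoking the correct Gibbs-Markov Liv\v{s}ic statement.
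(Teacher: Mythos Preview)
Your overall strategy---reduce to a coboundary equation $I_\psi=g\circ\bar F-g$ on the base, apply Gou\"ezel's Liv\v{s}ic theorem for Gibbs-Markov maps, then propagate continuity up the suspension via $\tilde h(y,u)=g(y)+\psi_u(y,0)$---is exactly the paper's approach. However, there is a genuine measure-theoretic gap in your first step.

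You define $h_0(y)=h(y,0)$ and then evaluate the coboundary identity $\psi_t=h\circ\bar S_t-h$ at the point $(y,0)$ with $t=R(y)$. The problem is that the base $\bar Y\times\{0\}$ has $\bar\mu$-measure zero in the suspension $\bar Y^R$, so the a.e.\ identity need not hold there at all; indeed $h_0$ is not even well defined as a measurable function on $\bar Y$ from $h$ alone. The same issue recurs in your final paragraph, where you write $h\circ\bar S_u(y,0)=h_0(y)+\psi_u(y,0)$: this again evaluates the identity on the null set $\bar Y\times\{0\}$. The paper handles this by first making the exceptional set independent of $t$ (citing~\cite{CoFoSi82}), then applying Fubini to find some $u_0\in(0,\inf R)$ for which the identity $\psi_{R(y)}(y,u_0)=h(\bar Fy,u_0)-h(y,u_0)$ holds for $\mu_{\bar Y}$-a.e.\ $y$. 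One then sets $g(y)=h(y,u_0)-\psi_{u_0}(y,0)$ and uses the cocycle property of $\psi$ to derive $I_\psi=g\circ\bar F-g$; from there your argument goes through verbatim with $g$ in place of $h_0$. This Fubini step is routine but not optional, and is precisely the kind of ``additional technical obstacle'' you anticipated would not arise.
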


\begin{proof}
We begin by following the proof of~\cite[Theorem~3.3]{Walkden00}.
Note that the set of zero measure where
$\psi_t=h\circ \bar S_t-h$ fails for each $t\ge0$ can be made
independent of $t$; see e.g. \cite[p. 13]{CoFoSi82}.
Hence for almost every $(y,u)\in \bar Y^R$,
\begin{align}\label{eq:cobd}
  \psi_{R(y)}(y,u)=h(\bar S_{R(y)}(y,u))-h(y,u).
\end{align}
Then
$\Psi(y,u)=\psi_{R(y)}(y,u)-h(\bar S_{R(y)}(y,u))+h(y,u)=0$,
$\bar\mu$-a.e. and so by Fubini's Theorem there exists
$0<u_0<\inf R$ such that $\Psi(y,u_0)=0$ for $\mu_{\bar Y}$-a.e $y\in
\bar Y$.
Since $\psi$ is a cocycle,
\begin{align*}
  \psi_{R(y)}(y,u_0)
  &=
  \psi_{R(y)}(\bar S_{u_0}(y,0))
  =
  \psi_{R(y)+u_0}(y,0)-\psi_{u_0}(y,0)
  \\
  &=
  \psi_{u_0}(\bar S_{R(y)}(y,0))+\psi_{R(y)}(y,0)-\psi_{u_0}(y,0)
  \\
  &=
  \psi_{u_0}(\bar S_{R(y)}(y,0))+I_\psi(y)-\psi_{u_0}(y,0).
\end{align*}
Substituting in (\ref{eq:cobd}) with $u=u_0$ and using
$\bar S_{R(y)}(y,u_0)=(\bar Fy,u_0)$ we obtain
\begin{align*}
  I_\psi(y)
  =
  h(\bar Fy,u_0)-\psi_{u_0}(\bar Fy,0) -  (h(y,u_0)-\psi_{u_0}(y,0))
  =
  g(\bar Fy)-g(y),
\end{align*}
where $g(y)=h(y,u_0)-\psi_{u_0}(y,0)$ is measurable.  

We have shown that $I_\psi=g\circ\bar F-g$ satisfies the hypotheses of~\cite[Theorem 1.1]{Gouezel06}.  It follows that $g$ has a version that
      is continuous (even Lipschitz) on $\bar Y$.

Let us now define
\begin{align*}
  \tilde h(y,u)=\psi_u(y,0)+ g(y).
\end{align*}
As in the proof of~\cite[Theorem 3.3]{Walkden00},
it follows from the definitions
that $\tilde h$ is a well-defined function on
$\bar Y^R$ (ie $\tilde h(y,R(y))=\tilde h(\bar F,0)$) and that
$\tilde h$  is a version of $h$.
Since $\psi_u$ and $g$ are continuous, it follows that $\tilde h$ is continuous as required.
\end{proof}

\begin{corollary} \label{cor:livsic}
Suppose that $v\in F_{\theta,0}(\bar Y^R)$ satisfies $v=\chi\circ \bar S-\chi$
for some $\chi\in L^2$.
Then 
$\int_0^T v(\bar S_tq)\,dt=0$ for all periodic points $q\in \bar Y^R$ of period $T$.
\end{corollary}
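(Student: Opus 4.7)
The plan is to invoke Lemma~\ref{lem:livsic} applied to a continuous-time cocycle built from the given discrete coboundary, exactly as in the Axiom~A argument of~\cite{MelTor02}. First I would define
\[
\psi_t=\int_0^t v\circ \bar S_s\,ds, \qquad h=\int_0^1\chi\circ \bar S_s\,ds.
\]
Then $\psi_t$ is a continuous cocycle for $\bar S_t$ and $h\in L^2$ since $\chi\in L^2$. A one-line telescoping computation using $v=\chi\circ\bar S-\chi$ gives
\[
\psi_t=\int_t^{t+1}\chi\circ\bar S_s\,ds-\int_0^1\chi\circ\bar S_s\,ds=h\circ\bar S_t-h
\]
almost everywhere, so $\psi_t$ is realized as an $L^2$ coboundary with transfer function $h$.

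Next I would verify the Gibbs--Markov summability hypothesis $\sum_{a\in\alpha}\mu_{\bar Y}(a)\Lip_a I_\psi<\infty$ required to apply Lemma~\ref{lem:livsic}, where $I_\psi(y)=\int_0^{R(y)}v(y,s)\,ds$. For $y,y'\in a$ with $R(y)\le R(y')$, splitting the integral at $R(y)$ and estimating the two pieces separately gives
\[
|I_\psi(y)-I_\psi(y')|\le R(y)\,|v|_\theta\, d_\theta(y,y')+|v|_\infty\,|R(y)-R(y')|,
\]
so $\Lip_a I_\psi \le |v|_\theta \sup_a R+|v|_\infty \Lip_a R$. Since points in a common partition element have $d_\theta$-distance at most $\theta$, we have $\sup_a R\le \inf_a R+\theta\Lip_a R$, and summing yields
\[
\sum_a\mu_{\bar Y}(a)\Lip_a I_\psi
\le |v|_\theta\int R\,d\mu_{\bar Y}+(\theta|v|_\theta+|v|_\infty)\sum_a\mu_{\bar Y}(a)\Lip_a R,
\]
which is finite by condition~(ii) (which gives $R\in L^1$) and condition~(iii$_1$).

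With this hypothesis verified, Lemma~\ref{lem:livsic} produces a continuous version of $h$ on $\bar Y^R$. For any periodic point $q\in \bar Y^R$ of period $T$, the cocycle identity and continuity of $h$ then yield
\[
\int_0^T v(\bar S_tq)\,dt=\psi_T(q)=h(\bar S_Tq)-h(q)=h(q)-h(q)=0,
\]
as required.

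The only genuine calculation is the Lipschitz estimate on $I_\psi$ together with the subsequent summation; everything else is bookkeeping to lift the discrete coboundary to a continuous one so that Lemma~\ref{lem:livsic} is applicable. The main conceptual point is that, although $v$ is only continuous (not Lipschitz) in the flow direction, integrating along the roof allows $I_\psi$ to inherit just enough regularity from $|v|_\theta$, $|v|_\infty$, and the assumed tameness of $R$ (conditions~(ii) and~(iii$_1$)) to meet the hypothesis of the Liv\v{s}ic-type Lemma~\ref{lem:livsic}.
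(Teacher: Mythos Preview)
Your argument is correct and follows the paper's proof essentially line for line: define $\psi_t$ and $h$, observe $\psi_t=h\circ\bar S_t-h$, verify the summability hypothesis on $I_\psi$ via the same splitting of $\int_0^{R(y)}v(y,s)\,ds$, apply Lemma~\ref{lem:livsic}, and conclude by continuity at periodic points. Your justification that $\sum_a\mu_{\bar Y}(a)\sup_a R<\infty$ via $\sup_a R\le\inf_a R+\theta\Lip_a R$ is in fact slightly more explicit than the paper, which simply asserts the final sum is finite.
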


\begin{proof}
Define $\psi_t=\int_0^t v\circ \bar S_s\,ds$ and $h=\int_0^1 \chi\circ \bar S_s\,ds$,
so $\psi_t=h\circ \bar S_t-h$.
We claim that $I_\psi$ satisfies the assumption
in Lemma~\ref{lem:livsic}.
Then $h$ has  a continuous version.
Hence it follows as in the Axiom~A setting that
$\int_0^T v(\bar S_tq)\,dt=0$ for all periodic points $q\in \bar Y^R$ of period $T$.

It remains to verify the claim.
For $y\in \bar Y$, we compute that
$I_\psi(y)=\psi_{R(y)}(y,0)=\int_0^{R(y)}v\circ \bar S_s(y,0)ds=
\int_0^{R(y)} v(y,u)\,du$.   For $a\in\alpha$, $x,y\in a$, and taking $R(y)\ge R(x)$,
\begin{align*}
  & |I_\psi(x)-I_\psi(y)| = \Bigl|
    \int_0^{R(x)}\big(v(x,u)-v(y,u)\big)\,du+\int_{R(x)}^{R(y)}v(y,u)\,du
    \Bigr|
    \\
    & \qquad\qquad
\le
    R(x)|v|_\theta d_\theta(x,y)+|R(y)-R(x)||v|_\infty
    \le
    (\Sup_aR+\Lip_aR)\|v\|_\theta d_\theta(x,y).
\end{align*}
Hence $\Lip_a I_\psi\le (\Sup_aR+\Lip_aR)\|v\|_\theta$
and so
\begin{align*}
  \sum_{a\in\alpha}\mu_{\bar Y}(a)\Lip_aI_\psi\le \|v\|_\theta\sum_{a\in\alpha}\mu_{\bar Y}(a)(\Sup_aR+\Lip_aR)
  <\infty,
\end{align*}
as required.
\end{proof}

\begin{remark} \label{rmk:livsic}
The condition that
$\bar F:\bar Y\to \bar Y$ is full-branch can be relaxed as in~\cite{Gouezel06}.
The function $g:\bar Y\to\R$ constructed in the proof of Lemma~\ref{lem:livsic}
will no longer be continuous 
in general, but it is continuous on each partition element
of the 
partition $\alpha_*$ generated by the images $\bar Fa$ of the elements of $\alpha$.  We conclude that $h$ has a version that is continuous on
$\{(y,u)\in a_*\times[0,\infty):u\le R(y)\}$ for each $a_*\in\alpha_*$.
Hence in Corollary~\ref{cor:livsic} we obtain that 
$\int_0^T v(\bar S_tq)\,dt=0$ for periodic points $q\in \bar Y^R$ of period $T$
such that the orbit of $q$ intersects one of the partition elements $a_*$.
\end{remark}

\section{CLT and ASIP for the time-$1$ map of geometric
  Lorenz flows}
\label{sec-ASIPflow}

In this section we prove Theorem~\ref{thm:ASIP} (and as a
consequence Theorem~\ref{thm:CLT}), by reducing from the
geometric Lorenz flow to the quotient flow, enabling the
application of Theorem~\ref{thm-ASIPsemiflow}.

To achieve this reduction, we modify the argument
in~\cite[Appendix~A]{MelTor02} which deals with the Axiom~A
case and bounded roof function.  We note that the argument
in~\cite{MelTor02} is unnecessarily complicated, since
having reduced without loss to the situation where $r$
depends only on future coordinates, the quantity $\Delta$
in~\cite[Proposition~5]{MelTor02} is identically zero.

On the other hand, the situation for geometric Lorenz
attractors is made complicated since (a) there is no
convenient metric on the symbolic flow $Y^R$, and (b) the
roof function is unbounded.  To deal with (a), we reduce
directly to $\bar Y^R$.  For (b), we make crucial use of
Lemma~\ref{le:Lip-symbolic} and Proposition~\ref{prop-p}.

\begin{theorem} \label{thm-sinai}
Let $v:\R^3\to\R$ be a $C^{k+1}$ observable of mean zero.   
There exists $\hat v,\hat\chi:Y^R\to\R$ continuous and bounded such that
\begin{itemize}
\item[(i)] $v\circ p=\hat v+\hat\chi-\hat\chi\circ S$,
\item[(ii)] $\hat v$ depends only on future coordinates and
  hence projects to a mean zero observable $\bar v:\bar
  Y^R\to\R$,
\item[(iii)] $\bar v\in F_{\theta',k}(\bar Y^R)$ for some
  $\theta'\in(0,1)$.
\end{itemize}
\end{theorem}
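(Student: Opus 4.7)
The plan is to apply a Sinai-style telescoping reduction, as in~\cite[Appendix~A]{MelTor02}, adapted to the present setting. A crucial simplification over the Axiom~A case is that property~(C) makes the roof $R$ constant on stable leaves, so $\pi^R:Y^R\to\bar Y^R$, $\pi^R(y,u)=(\pi y,u)$, is a well-defined semiconjugacy between time-$1$ maps, and no preliminary reduction of the roof is needed. First I would choose a $C^{1+\epsilon}$ section $\sigma:\bar Y\to Y$ of $\pi$, afforded by the $C^{1+\epsilon}$-regularity of the stable foliation (Lemma~\ref{lem-foliate}), and define the retraction $\xi=\sigma\pi:Y\to Y$ and its suspension lift $\tilde\xi(y,u)=(\xi y,u)$. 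Since $R$ is stable-leaf-invariant, $R(\xi y)=R(y)$, so $\tilde\xi$ respects fiber lengths, and $\pi^R\tilde\xi=\pi^R$.

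Next I would introduce
\[
\hat\chi(z)=\sum_{n=0}^\infty\bigl[V(S^nz)-V(S^n\tilde\xi z)\bigr],\qquad V=v\circ p,
\]
and set $\hat v:=V-\hat\chi+\hat\chi\circ S$. Convergence of $\hat\chi$ follows because $S^nz$ and $S^n\tilde\xi z$ lie in a common fiber of $\pi^R$ at every iterate (stable-leaf equivalence is preserved by $S$ precisely because $R$ is stable-leaf invariant), hence their $p$-images lie on a common strong stable leaf $\cF^{ss}$ of the flow; uniform exponential contraction along $\cF^{ss}$ (Section~\ref{sec:volume-hyperb-conseq}) gives $|p(S^nz)-p(S^n\tilde\xi z)|\le Ce^{-\beta n}$, so each summand is $O(\|v\|_{C^1}e^{-\beta n})$. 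An Abel-summation argument handles the non-equivariance $\tilde\xi S\ne S\tilde\xi$ (whose two outputs themselves sit on a common stable leaf, with distance uniformly bounded), and the boundary term $V(S^{N+1}z)-v(Z_{N+1}p(\tilde\xi z))$ tends to zero by the same stable contraction; in the limit one obtains
\[
\hat v(z)=v(p(\tilde\xi z))+\sum_{n=0}^\infty\bigl[v(Z_n p(S\tilde\xi z))-v(Z_n p(\tilde\xi Sz))\bigr].
\]
Since $\tilde\xi z,\,S\tilde\xi z,\,\tilde\xi Sz$ all depend only on $\pi^R z$, so does $\hat v$, defining $\bar v:\bar Y^R\to\R$ with mean zero (from $\int V\,d\mu=0$ and $S$-invariance of $\mu$); both $\hat v,\hat\chi$ are bounded by the exponential estimates.

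For (iii) I would argue as follows. The leading term $v(Z_u\sigma\bar y)$ is $C^k$ in $u$ (by smoothness of $v$ and of the flow on the attractor) and $d_\theta$-Lipschitz in $\bar y$ by Lemma~\ref{le:expansion-flow}. Each summand in the correction series admits two competing estimates for variations in $\bar y$: a direct Lipschitz bound of order $e^{\lambda_u n}d_\theta(\bar y,\bar y')$, obtained from Proposition~\ref{prop-p} together with the $d_\theta$-Lipschitz property of $R$ (Lemma~\ref{le:Lip-symbolic}) and then propagated through the expansion of $Z_n$; and a uniform bound of order $e^{-\beta n}$, from the stable contraction controlling $|Z_np(S\tilde\xi z)-Z_np(\tilde\xi Sz)|$. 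Balancing the two at $n^\star\sim-\log d_\theta(\bar y,\bar y')/(\lambda_u+\beta)$ and summing yields a H\"older bound of order $d_\theta(\bar y,\bar y')^{\beta/(\lambda_u+\beta)}$, which is Lipschitz in the coarser symbolic metric $d_{\theta'}$ with $\theta'=\theta^{\beta/(\lambda_u+\beta)}\in(\theta,1)$. The $u$-derivatives up to order $k$ are treated analogously, differentiating under the series and using boundedness of $G$ and its derivatives on $\Lambda$, which places $\bar v$ in $F_{\theta',k}(\bar Y^R)$.

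The main technical obstacle is that $\tilde\xi$ is not intrinsically well-defined on $Y^R$: the identification $(y,R(y))\sim(Fy,0)$ requires $F\xi y=\xi Fy$, equivalently $F\sigma=\sigma\bar F$, which fails for a generic $\sigma$; without correction, $\hat\chi$ and hence $\hat v$ develop jumps along fiber boundaries. This is resolved by choosing an $F$-equivariant section, obtained for instance as the limit $\sigma(\bar y)=\lim_n F^n\sigma_0(\bar F^{-n}\bar y)$ along a distinguished inverse-branch selection anchored at a periodic point of $\bar F$, with convergence guaranteed by uniform stable-leaf contraction of $F$; equivalently one absorbs the boundary jumps into $\hat\chi$ via a discrete correction that preserves the uniform exponential estimates and the $d_{\theta'}$-Lipschitz property. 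Once this continuity is secured, the estimates outlined above give Theorem~\ref{thm-sinai}.
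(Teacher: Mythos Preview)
Your overall strategy --- Sinai telescoping to peel off a coboundary and project to the quotient --- is exactly what the paper does. Two points of comparison are worth noting.

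First, the section $\sigma$ is an unnecessary complication. The paper defines
\[
\hat\chi(y,u)=\sum_{n=0}^\infty\bigl\{v(Z_nZ_uy)-v(Z_n\,\pi Z_uy)\bigr\},
\]
working directly in $\R^3$ with the projection $\pi:X\to\bar X$ rather than lifting back to $Y^R$ via a section. Because $Z_{R(y)}y=Fy$ and hence $\pi Z_{R(y)}y=\pi Fy$, this formula respects the identification $(y,R(y))\sim(Fy,0)$ automatically: the equivariance $\pi F=\bar F\pi$ of the \emph{projection} is built in, and no condition $F\sigma=\sigma\bar F$ is needed. Your proposed fix (an $F$-equivariant section, essentially the unstable manifold of a periodic point of $F$) would work, but it is an avoidable detour.

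Second, your estimate for (iii) is the right shape but glosses over the main technical issue. The phrase ``propagated through the expansion of $Z_n$'' hides the fact that the roof $R$ is unbounded and the flow passes arbitrarily close to the equilibrium, so a crude bound $|Z_nx-Z_nx'|\le e^{\lambda_u n}|x-x'|$ is neither what Proposition~\ref{prop-p} gives nor what one should invoke. The paper handles this by tracking \emph{lap numbers}: for $(y,u)$ and $(y',u)$ with $s(y,y')$ large, one shows the number of returns to $Y$ under $Z_{n+u+1}$ differs by at most one, and then applies Proposition~\ref{prop-p} and the $d_\theta$-Lipschitz bound on $R_j$ at each return. This yields $|a_n|\le C(1+|R|_\theta)\theta^{qN-n/\inf R}$ for the first $N$ terms (with $s(y,y')=qN$), which is then balanced against the $e^{-aN}$ tail exactly as you propose. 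Your balancing framework is correct; the content is in controlling the finitely many terms, and that requires the lap-number bookkeeping rather than a single global expansion rate.
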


We recall that 
$p:Y^R\to\R^3$ denotes the measure-preserving semiconjugacy between the flows
$S_t:Y^R\to Y^R$ and $Z_t:\R^3\to\R^3$.
Recall also that the projection $\pi:X\to \bar X$ along
stable manifolds restricts to a projection
$\pi:Y\to\bar Y$.  Also, we defined $\pi(y,u)=(\pi
y,u)$ provided that $u\in[0,R(y))$.  This induces a
measure-preserving semiconjugacy
$\pi:Y^R\to\bar Y^R$ between the flow $S_t:Y^R\to Y^R$ and the semiflow 
$\bar S_t:\bar Y^R\to \bar Y^R$.

We now show how Theorem~\ref{thm:ASIP} follows from Theorem~\ref{thm-sinai}.

\begin{proof}[Proof of Theorem~\ref{thm:ASIP}]
  Let $v:\R^3\to\R$ be a $C^{k+2}$ mean zero observable.  
By Theorem~\ref{thm-sinai}, 
  $\sum_{j=0}^{n-1}v\circ Z^j\circ p=
  \sum_{j=0}^{n-1}\hat v\circ S^j+\hat\chi-\hat\chi\circ S^n=
  \sum_{j=0}^{n-1}\bar v\circ S^j\circ \pi+O(1)$ uniformly on
  $Y^R$, where $\bar v\in F_{\theta',k+1}(\bar Y^R)$.
Hence
the ASIP for $v$ is equivalent to the ASIP for $\bar v$ and follows from
Theorem~\ref{thm-ASIPsemiflow}.

It remains to verify the statement about the degenerate case
$\sigma^2=0$ in Theorem~\ref{thm:CLT}.
By Theorem~\ref{thm-ASIPsemiflow}, $\bar v=\chi\circ S-\chi$ for some
$\chi\in L^p(\bar Y^R)$.
Working still on $\bar Y^R$,
define $\psi_t=\int_0^t \bar v\circ S_s\,ds$, 
$h=\int_0^1 \chi\circ S_s\,ds$, so $\psi_t=h\circ \bar S_t-h$.
By Lemma~\ref{lem:livsic}, $h$ has a continuous version.

Lifting to $Y^R$, we have that
$\int_0^t \hat v\circ S_s\,ds=\hat h\circ S_t-\hat h$ where
$\hat h=h\circ\pi$ is continuous.
Hence using Theorem~\ref{thm-sinai}(i),
\begin{align} \label{eq-h}
\int_0^t v\circ Z_s\circ p\,ds=\hat h\circ p\circ S_t-\hat h\circ p+
\int_0^t\tilde\chi\circ S_s\,ds-
\int_0^t\tilde\chi\circ S_{s+1}\,ds
=\tilde h\circ S_t-\tilde h,
\end{align}
where $\tilde h=\hat h\circ p-\int_0^1\tilde\chi\circ S_s\,ds$ is continuous.

Now suppose that $q$ is a periodic point of period $T_1$ for the geometric Lorenz flow $Z_t$.
Then  $q=Z_{u_0}(y_0)=p(y_0,u_0)$ for some $y_0\in Y$, $u_0\in[0,R(y_0)]$.
Since $R$ is not necessarily the first return time to $Y$ it need not be the 
case that $(y_0,u_0)$ has period $T_1$ under $S_t$.  However, certainly there exists $T>0$, an integer multiple of $T_1$, 
such that $S_T(y_0,u_0)=(y_0,u_0)$.  By~\eqref{eq-h},
$\int_0^Tv(Z_tq)\,dt=
\tilde h(S_T(y_0,u_0))-\tilde h(y_0,u_0)=0$
as required.
\end{proof}

In the remainder of this section, we prove Theorem~\ref{thm-sinai}.

Recall that the projection $p:Y^R\to \R^3$ is given
by $p(y,u)=Z_u(y)$.  Note that if $(y,u)\in Y^R$ then
$\pi Z_uy=Z_u\pi y$.  However, we caution that for
general $t>0$, $x\in \R^3$ it is {\em not} the case that
$\pi Z_tx$ and $Z_t\pi x$ coincide.

\begin{proof}[Proof of Theorem~\ref{thm-sinai}]
Define $\hat\chi:Y^R\to\R$ by setting
\[
\hat\chi(y,u)=\sum_{n=0}^\infty\{v(Z_nZ_uy)-v(Z_n\pi Z_uy)\}.
\]

It follows from exponential contraction along stable manifolds that
there are constants $C,a>0$ such that
\begin{align} \label{eq-chi_n} \nonumber
|v(Z_n(Z_uy)-v(Z_n(\pi Z_uy))| & \le |Dv|_\infty|Z_n(Z_uy)-Z_n(\pi Z_uy)|
\\ & \le C|Dv|_\infty e^{-an}|Z_uy-\pi Z_uy|\le C'|Dv|_\infty e^{-an},
\end{align}
so that $\hat\chi$ is continuous and bounded.

Define $\hat v:Y^R\to\R$ by setting
\begin{align*}
\hat v & =v\circ p+\hat\chi\circ S-\hat\chi,
\end{align*}
so that (i) is satisfied by definition.
Also
\begin{align*}
\hat v(y,u) & = v(\pi Z_u y)+\sum_{n=0}^\infty
\{v(Z_n(Z\pi Z_uy))-v(Z_n(\pi Z_{u+1}y))\},
\end{align*}
and so (ii) is satisfied.

When proving (iii), we note that the formula for $\partial_t^j\hat v$ is identical to that for $\hat v$ with $v$ replaced by $\partial_t^jv$ throughout.
Hence it suffices to consider the case $k=0$ and to prove
that $\bar v\in F_{\theta',0}(\bar Y^R)$ for $v\in C^1(\R^3)$.

By the triangle inequality it suffices to show that
\begin{align} \label{eq-barv_u}
|\bar v(y,u)-\bar v(y,u')| & \le C|Dv|_\infty |u-u'|, \\
\label{eq-barv_y}
|\bar v(y,u)-\bar v(y',u)| & \le C|Dv|_\infty(1+|R|_\theta) d_{\theta'}(y,y').
\end{align}

First we prove~\eqref{eq-barv_u}.
The $n$'th term of $\tilde v$ is given by
\[
w_n(u)=v(Z_n(Z\pi Z_uy))- v(Z_n(\pi Z_{u+1}y)).
\]
We have $|w_n(u)-w_n(u')|\le |Dw_n|_\infty |u-u'|$.
But 
\begin{align*}
|Dw_n(u)| & \le |Dv|_\infty |Z_n(Z\pi Z_uy)- Z_n(\pi Z_{u+1}y)|
\\ & \le C|Dv|_\infty e^{-an}|Z\pi Z_uy- \pi Z_{u+1}y|
\le C'|Dv|_\infty e^{-an}.
\end{align*}
Hence 
\[
|w_n(u)-w_n(u')|\le C''|Dv|_\infty e^{-an}|u-u'|,
\]
and~\eqref{eq-barv_u} follows.

It remains to prove~\eqref{eq-barv_y}.
For the initial term in the formula for $\hat v$, we note that
\begin{align} \label{eq-initial} \nonumber
|v(\pi Z_uy)-v(\pi Z_u y')| & \le |Dv|_\infty |\pi Z_uy-\pi Z_uy'|
=|Dv|_\infty |p(\pi y,u)-p(\pi y',u)| \\ &
 \le C|Dv|_\infty d_\theta(y,y'),
\end{align}
by Proposition~\ref{prop-p}.

Let $N\ge1$.
We write the remainder of $\hat v(y,u)-\hat v(y',u)$ as 
\[
A(Z\pi Z_uy,Z\pi Z_uy')+ A(\pi Z_{u+1}y,\pi Z_{u+1}y')+B(y)+B(y'),
\]
where
\begin{align*}
& A(x,x')  =\sum_{n=0}^{N-1} \{v(Z_nx)-v(Z_nx')\}, \\
& B(y)  =\sum_{n=N}^\infty \{v(Z_n(Z\pi Z_uy))-v(Z_n(\pi Z_{u+1}y))\}. 
\end{align*}

We again use exponential contraction along stable directions as in~\eqref{eq-chi_n} to show that
\begin{align} \label{eq-B}
|B(y)|,\,|B(y')|\le C|Dv|_\infty e^{-aN}.
\end{align}

Let $j=j(y,t)$ be the lap number for $y\in Y$ under $Z_t$, 
so $t\in[R_j(y),R_{j+1}(y))$ and
$Z_t(y,0)=p(F^jy,t-R_j(y))$.
Then the $n$'th term of $A(Z\pi Z_uy,Z\pi Z_u y')=A(Z_{u+1}\pi y,Z_{u+1}\pi y')$ has the form
\[
a_n=v\circ p(F^j\pi y,n+u+1-R_{j}(y))-v\circ p(F^{j'}\pi y',n+u+1-R_{j'}(y')),
\]
where 
\begin{align} \label{eq-lap}
j=j(y,n+u+1),\quad j'=j(y',n+u+1).
\end{align}
Note that $j,\,j'\le (n+1)/\inf R\le N/\inf R$.

Let $q=[1/\bar r]+2$.
Suppose that $s(y,y')= qN$.
Choose $N$ so large that $(1-\theta)^{-1}|R|_\theta\theta^N<\inf R$.
Then 
\begin{align*}
|R_j(y)-R_j(y')| & \le (1-\theta)^{-1}|R|_\theta\theta^{-j+1}\theta^{Nq}
\le (1-\theta)^{-1}|R|_\theta\theta^{N(q-1/\inf R)}
 \\ & \le (1-\theta)^{-1}|R|_\theta\theta^N<\inf R,
\end{align*}
for all $0\le j\le [N/\inf R]$.  Hence for this range of $j$, the intervals
$[R_j(y),R_{j+1}(y)]$ and
$[R_j(y'),R_{j+1}(y')]$ 
almost coincide (the initial points are within distance $\inf R$, as are the
final points).  It follows 
that the lap numbers $j$ and $j'$ 
in~\eqref{eq-lap} satisfy
$|j-j'|\le 1$ for all $0\le n\le N$.
The estimation of the terms in $A(Z\pi Z_uy,Z\pi Z_uy')$ now splits into three cases.

When $j=j'$, we obtain the term
\[
a_n=v\circ p(F^j\pi y,n+u+1-R_{j}(y))-v\circ p(F^j\pi y',n+u+1-R_{j}(y')).
\]
Hence by Proposition~\ref{prop-p},
\begin{align} \label{eq-jj'} \nonumber
 |a_n| & \le C|Dv|_\infty \{\theta^{s(F^j\pi y,F^j\pi y')}|+|R_j(y)-R_j(y')|\}
 \\ & \le C'|Dv|_\infty(1+|R|_\theta) \theta^{s(y,y')-j} 
 \le C'|Dv|_\infty(1+|R|_\theta) \theta^{qN-n/\inf R}.
\end{align}
If $j'=j+1$, then
\begin{align*}
a_n = & v\circ p(F^j\pi y,n+u+1-R_{j}(y))-
v\circ p(F^j\pi y,R(F^jy)) \\ & +
v\circ p(F^{j+1}\pi y,0)-
v\circ p(F^{j+1}\pi y',n+u+1-R_{j+1}(y')),
\end{align*}
so that
\begin{align*}
|a_n|  & \le   C|Dv|_\infty\{R_{j+1}(y)-n-u-1\} \\ &
\quad + C|Dv|_\infty\{\theta^{s(F^{j+1}y,F^{j+1}y')}+n+u+1-R_{j+1}(y')\} \\ 
& =
C|Dv|_\infty\{\theta^{s(F^{j+1}y,F^{j+1}y')}+R_{j+1}(y)-R_{j+1}(y')\},
\end{align*}
yielding the same estimate as in~\eqref{eq-jj'}.  
Similarly for the case $j'=j-1$.
Hence in all three cases, we obtain the estimate~\eqref{eq-jj'}.
Summing over $n$, we obtain that 
\begin{align} \label{eq-A1}
|A(Z\pi Z_uy,Z\pi Z_uy')| \le C|Dv|_\infty(1+|R|_\theta)\theta^{(q-1/\inf R)N}    \le C|Dv|_\infty(1+|R|_\theta)\theta^N.    
\end{align}

To deal with the $n$'th term
$A(\pi Z_{u+1}y,\pi Z_{u+1}y')$ we need to introduce four lap numbers.
First let
$j_1\le 1/\inf R$ be the lap number corresponding to $Z_{u+1}y$, so 
\[
\pi Z_{u+1}y= \pi Z_{u+1-R_{j_1}(y)}(F^{j_1}y)= Z_{u+1-R_{j_1}(y)}(\pi F^{j_1}y).
\]
Then let $j=j_1+j_2$ where $j_2\le n/\inf R$ is the lap
number corresponding to $Z_{u+1-R_{j_1}(y)}(F^{j_1}y)$ under
$Z_n$.  Altogether, we obtain
\[
Z_n\pi Z_{u+1}y=Z_{n+u+1-R_{j_1}(y)-R_{j_2}(F^{j_1}y)}(F^{j_2}\pi F^{j_1}y)
= p(F^{j_2}\pi F^{j_1}y,n+u+1-R_j(y)).
\]
Similarly, we write
\[
Z_n\pi Z_{u+1}y' = p(F^{j'_2}\pi F^{j'_1}y',n+u+1-R_{j'}(y')).
\]
Again, we consider the three cases $j=j'$, $j=j'+1$,
$j=j'-1$ separately.  For example, if $j'=j+1$, then
\begin{align*}
 & |Z_n\pi Z_{u+1}y-Z_n\pi Z_{u+1}y'| \\ & \quad =
|p(F^{j_2}\pi F^{j_1}y,n+u+1-R_j(y))-
p(F^{j'_2}\pi F^{j'_1}y',n+u+1-R_{j'}(y'))| \\ & \quad \le 
|p(F^{j_2}\pi F^{j_1}y,n+u+1-R_j(y))- 
p(F^{j_2}\pi F^{j_1}y,R(F^jy)| \\ & \qquad\qquad
+ |p(F^{j_2+1}\pi F^{j_1}y,0)-
p(F^{j'_2}\pi F^{j'_1}y',n+u+1-R_{j+1}(y'))| \\
& \quad \le C\{R_{j+1}(y)-n-u-1\}+C\{\theta^{s(F^{j_2+1}\pi F^{j_1}y,F^{j'_2}\pi F^{j'_1}y')}+n+u+1-R_{j+1}(y')\} \\
& \quad = C\{\theta^{s(F^{j+1}y,F^{j+1}y')}+R_{j+1}(y)-R_{j+1}(y')\},
\end{align*}
and the calculation proceeds as for~\eqref{eq-A1}.  Hence we obtain
\begin{align} \label{eq-A2}
|A(\pi Z_{u+1}y,\pi Z_{u+1}y')| \le C|Dv|_\infty(1+|R|_\theta)\theta^N.    
\end{align}

Combining~\eqref{eq-initial},~\eqref{eq-B},~\eqref{eq-A1},~\eqref{eq-A2},
we obtain that $|\bar v(y,u)-\bar v(y',u)|\le C|Dv|_\infty\{
e^{-aN}+(1+|R|_\theta)\theta^{N/q}\}$.
Hence~\eqref{eq-barv_y} holds with
$\theta'=\max\{e^{-a},\theta^{1/q}\}$ completing the proof.
\end{proof}

%%%%%%%%%%%%%%%%%%%%%%%%%

\def\cprime{$'$}

  % \bibliographystyle{abbrv}
  % \bibliography{../bibliobase/bibliography}

\begin{thebibliography}{10}

\bibitem{Afr77}
    V.~S. Afra{\u\i}movi{\v{c}}, V.~V. Bykov, V. V. and L.~P. Sil{\cprime}nikov.
     \newblock {The origin and structure of the Lorenz attractor},
   \newblock {\em Dokl. Akad. Nauk SSSR}, {234}(2):336--339, 1977.

\bibitem{AAPP}
J.~F. Alves, V.~Ara{\'u}jo, M.~J. Pacifico, and V.~Pinheiro.
\newblock {On the volume of singular-hyperbolic sets}.
\newblock {\em {Dynamical Systems, An International Journal}},
  {22}({3}):{249--267}, {2007}.

\bibitem{ALP}
J.~F. Alves, S.~Luzzatto, and V.~Pinheiro.
\newblock {Lyapunov exponents and rates of mixing for one-dimensional maps}.
\newblock {\em {Ergodic Theory Dynam. Systems}}, {24}({3}):{637--657}, {2004}

\bibitem{ArGalPac}
V.~Araujo, S.~Galatolo, and M.~J. Pacifico.
\newblock Decay of correlations of maps with uniformly contracting
fibers and logarithm law for singular hyperbolic attractors.
\newblock {\em Mathematische Zeitschrift}, {276}({3--4}):1001--1048, 2014.

\bibitem{AraPac2010}
V.~Ara{\'u}jo and M.~J. Pacifico.
\newblock {\em Three-dimensional flows}, volume~53 of {\em Ergebnisse der
  Mathematik und ihrer Grenzgebiete. 3. Folge. A Series of Modern Surveys in
  Mathematics [Results in Mathematics and Related Areas. 3rd Series. A Series
  of Modern Surveys in Mathematics]}.
\newblock Springer, Heidelberg, 2010.
% \newblock With a foreword by Marcelo Viana.

\bibitem{APPV}
V.~Ara{\'u}jo, E.~R. Pujals, M.~J. Pacifico, and M.~Viana.
\newblock Singular-hyperbolic attractors are chaotic.
\newblock {\em Trans. Amer. Math. Soc.}, 361:2431--2485, 2009.

\bibitem{ArVar}
V.~Araujo and P.~Varandas.
\newblock Robust exponential decay of correlations for singular-flows.
\newblock {\em Commun. Math. Phys.}, 311:215--246, 2012.

\bibitem{arbieto2010}
A.~Arbieto.
\newblock Sectional {L}yapunov exponents.
\newblock {\em Proc. Amer. Math. Soc.}, 138:3171--3178,
  2010.

\bibitem{AvGoYoc}
A.~Avila, S.~Gou{\"e}zel, and J.-C. Yoccoz.
\newblock {Exponential mixing for the Teichm{\"u}ller flow}.
\newblock {\em {Publ. Math. Inst. Hautes {\'E}tudes Sci.}}, {104}:{143--211},
  {2006}.

\bibitem{BM15}  O. Butterley and I. Melbourne.
Disintegration of invariant measures for hyperbolic skew products.   
Preprint, 2015.  arXiv:1503.04319

\bibitem{CoFoSi82}
I.~P. Cornfeld, S.~V. Fomin, and Y.~G. Sina\u\i.
\newblock {\em {Ergodic theory}}, volume {245} of {\em {Grundlehren der
  Mathematischen Wissenschaften [Fundamental Principles of Mathematical
  Sciences]}}.
\newblock {Springer-Verlag}, {New York}, {1982}.

\bibitem{CunyMerlevedesub}
C.~Cuny and F.~Merlev{\`e}de. {Strong invariance principles with rate for
  ``reverse'' martingales and applications}.  
  {\em J. Theor.\ Prob.} {28}{(1)}:137--183, 2015.

\bibitem{BarPes2007}
L.~Barreira and Y.~Pesin.
\newblock {\em Nonuniform hyperbolicity}, volume 115 of {\em Encyclopedia of
  Mathematics and its Applications}.
\newblock Cambridge University Press, Cambridge, 2007.
\newblock Dynamics of systems with nonzero Lyapunov exponents.

\bibitem{dolgopyat98}
D.~Dolgopyat.
\newblock {Prevalence of rapid mixing in hyperbolic flows}.
\newblock {\em {Ergodic Theory Dynam. Systems}}, {18}({5}):{1097--1114},
  {1998}.

\bibitem{Gouezel06}
S.~Gouezel.
\newblock {Regularity of coboundaries for non uniformly expanding Markov maps}.
\newblock {\em {Proceedings of the AMS}}, {134}({2}):{391--401}, {2006}.

\bibitem{GucWil79}
  J.~Guckenheimer and R.~F. Williams.
  \newblock Structural stability of Lorenz attractors.
  \newblock {\em Inst. Hautes \'Etudes Sci. Publ. Math.},
  50:59--72, 1979.

 \bibitem{Hartman02}
  P.~Hartman.
  \newblock {\em Ordinary differential equations}, volume~38 of {\em Classics in
    Applied Mathematics}.
    \newblock Society for Industrial and Applied Mathematics (SIAM), Philadelphia, PA, 2002.
      \newblock Corrected reprint of the second (1982) edition [Birkh{\"a}user,
       Boston, MA.  

\bibitem{HP70}
M.~W. Hirsch and C.~C. Pugh.
\newblock Stable manifolds and hyperbolic sets.
\newblock In {\em Global analysis}, 
\newblock {Proc. Sympos. Pure Math. 
{V}ol.  {XIV}, 
(Berkeley, Calif., 1968)},
Amer. Math. Soc., Providence, R.I., {1970}, pp.~133--163.

\bibitem{HoMel}
M.~Holland and I.~Melbourne.
\newblock Central limit theorems and invariance principles for {L}orenz
  attractors.
\newblock {\em J. Lond. Math. Soc. (2)}, 76(2):345--364, 2007.


\bibitem{Lo63}
E.~D. Lorenz.
\newblock Deterministic nonperiodic flow.
\newblock \emph{J. Atmosph. Sci.} 20: (1963):130--141, (1963).


\bibitem{LMP05}
S.~Luzzatto, I.~Melbourne, and F.~Paccaut,
\newblock The Lorenz attractor is mixing,
\newblock \emph{Commun. Math. Phys.} 260:393--401, 2005.

\bibitem{Melb07}
 I.~Melbourne.
 \newblock Rapid decay of correlations for nonuniformly hyperbolic
flows.
 \newblock {\em Trans. Amer. Math. Soc.} 359:2421--2441, 2007.


\bibitem{Melb09}
I.~Melbourne.
\newblock Decay of correlations for slowly mixing flows.
\newblock {\em Proc. Lond. Math. Soc. (3)}, 98(1):163--190, 2009.

\bibitem{MelTor02}
I.~Melbourne and A.~T{\"o}r{\"o}k.
\newblock Central limit theorems and invariance principles for time-one maps of
  hyperbolic flows.
\newblock {\em Commun. Math. Phys.}, 229(1):57--71, 2002.

\bibitem{MeMor06}
R.~Metzger and C.~Morales.
\newblock Sectional-hyperbolic systems.
\newblock {\em Ergodic Theory Dynam. Systems}, 28:1587--1597, 2008.

\bibitem{MPP04}
C. A.~Morales, M.~J.~Pacifico, and E.~R.~Pujals.
\newblock Robust transitive singular sets for 3-flows are partially hyperbolic attractors or repellers. 
\newblock {\em Ann. of Math. (2)}, 160(2):375--432, 2004. 

% \bibitem{PalisTakens}
% J.~Palis and F.~Takens.
% \newblock \emph{Hyperbolicity and sensitive chaotic dynamics at homoclinic bifurcations}.
% \newblock Cambridge University Press, Cambridge, 1993.

\bibitem{PhilippStout75}
W.~Philipp and W.~F. Stout. \emph{{Almost Sure Invariance Principles for
  Partial Sums of Weakly Dependent Random Variables}}. Memoirs of the Amer.
  Math. Soc. \textbf{161}, Amer. Math. Soc., Providence, RI, 1975.

\bibitem{Robinson92}
C. Robinson.  
Homoclinic bifurcation to a transitive attractor of Lorenz type, II.
 \emph{SIAM J. Math. Anal.}, 23(5):1255--1268, 1992.

  \bibitem{Rychlik90}
M.~R.~Rychlik.
\newblock  Lorenz attractors through \v{S}il\cprime nikov-type bifurcation. I.
\newblock	  \emph{Ergodic Theory Dynam. Systems}, 10(4):793--821, 1990.

\bibitem{Takens}
F. Takens.  Limit capacity and {H}ausdorff dimension of dynamically
              defined {C}antor sets.
Dynamical systems, {V}alparaiso 1986,
    {\em Lecture Notes in Math.},
    1331:
     196--212,
 Springer, Berlin, 1988.

\bibitem{Tucker}
  W.~Tucker.
  \newblock A rigorous ODE solver and Smale's 14th problem.
  \newblock {\em Found. Comput. Math.}, 2(1):53--117, 2002.

\bibitem{Tyran-Kaminska05}
M.~Tyran-Kami{\'n}ska. An invariance principle for maps with polynomial decay
  of correlations. \emph{Commun. Math. Phys.} 260:1--15, (2005).

\bibitem{viana2000i}
M.~Viana.
\newblock What's new on {L}orenz strange attractor.
\newblock {\em Mathematical Intelligencer},
22(3):6--19, 2000.

\bibitem{Walkden00}
C.~P. Walkden.
\newblock Liv\v sic theorems for hyperbolic flows.
\newblock {\em Trans. Amer. Math. Soc.}, 352(3):1299--1313, 2000.


\bibitem{Wil79}
R.~F. Williams.
\newblock The structure of {L}orenz attractors.
\newblock {\em Inst. Hautes \'Etudes Sci. Publ. Math.}, 50:73--99, 1979.


\bibitem{Yo98}
L.~S. Young.
\newblock {Statistical properties of dynamical systems with some
  hyperbolicity}.
\newblock {\em {Annals of Math.}}, {147}:{585--650}, {1998}.

\end{thebibliography}

\end{document}